\definecolor{webgreen}{rgb}{0,.5,0}
\definecolor{webbrown}{rgb}{.8,0,0}
\definecolor{emphcolor}{rgb}{0.5,0.95,0.95}
\ifpdf \hypersetup{pdftex,
	pdfstartview=FitH, 
	bookmarksopen=true,
	bookmarksnumbered=true
} \else \hypersetup{dvips} \fi
\numberwithin{equation}{section}
\newtheorem{theorem}{Theorem}[section]
\newtheorem{proposition}[theorem]{Proposition}
\newtheorem{remark}[theorem]{Remark}
\newtheorem{lemma}[theorem]{Lemma}
\newtheorem{assump}[theorem]{Assumptions}
\newcommand {\R}{\mathbb{R}}
\newcommand {\F}{\mathcal{F}}
\newcommand {\p}{\mathbb{P}}
\newcommand {\E}{\mathbb{E}}
\newcommand{\diff}{{\rm d}}
\newcommand{\Gen}{\mathcal{L}}
\newcommand{\Ind}{\mathbf{1}}
\newcommand{\cL}{\mathcal{L}}
\newcommand{\psiq}{\psi_q}
\newcommand{\phiqF}{\varphi_{q,F}}
\newcommand{\phiqFprime}{\varphi_{q,F}^{\prime}}
\newcommand{\phiqFminus}{\phi_{q,F}}
\newcommand{\ABS}[1]{{{\left| #1 \right|}}} 
\newcommand{\BRA}[1]{{{\left\{#1\right\}}}} 
\newcommand{\PAR}[1]{{{\left(#1\right)}}} 
\newcommand{\SBRA}[1]{{{\left[#1\right]}}} 
\renewcommand{\leq}{\leqslant}
\renewcommand{\geq}{\geqslant}
\begin{document}

\title[Optimal withdrawals in a diffusion model]{Optimal withdrawals in a general diffusion model with control rates subject to a state-dependent upper bound}

\author[H. Gu\'erin]{H\'el\`ene Gu\'erin}

\author[D. Mata]{Dante Mata}

\author[J.-F. Renaud]{Jean-Fran\c{c}ois Renaud}
\address{D\'epartement de math\'ematiques, Universit\'e du Qu\'ebec \`a Montr\'eal (UQAM), 201 av.\ Pr\'esident-Kennedy, Montr\'eal (Qu\'ebec) H2X 3Y7, Canada}
\email{guerin.helene@uqam.ca,mata\_lopez.dante@uqam.ca,renaud.jf@uqam.ca}

\author[A. Roch]{Alexandre Roch}
\address{D\'epartement de finance, \'Ecole des sciences de la gestion, Universit\'e du Qu\'ebec \`a Montr\'eal (UQAM), 315 rue Sainte-Catherine Est, Montr\'eal (Qu\'ebec) H2X 3X2, Canada}
\email{roch\_f.alexandre@uqam.ca}

\keywords{Stochastic control, absolutely continuous strategies, diffusion model, refraction strategies.}
\subjclass[2020]{93E20, 60J60, 60J70.}

\begin{abstract}

{We consider a classical stochastic control problem in which a diffusion process is controlled by a withdrawal process up to a termination time. The objective is to maximize the expected discounted value of the withdrawals until the first-passage time below level zero.  In this work, we are considering absolutely continuous control strategies in a general diffusion model. Our main contribution is a solution to the control problem under study, which is achieved by using a \textit{probabilistic} guess-and-verify approach. We prove that the optimal strategy belongs to the family of bang-bang strategies, i.e.\ strategies in which, above an optimal barrier level, we withdraw at the highest-allowed rate, while no withdrawals are made below this barrier. Some nontrivial examples are studied numerically. }
\end{abstract}

\maketitle

\section{Introduction}

We consider a classical stochastic control problem in which a nonnegative and increasing control process, called hereafter a \textit{withdrawal process}, is subtracted from a given diffusion process. The objective is to maximize the expected discounted value of the withdrawals made until the termination time, chosen here as the first-passage time (of the controlled process) below level zero. The challenge in this control problem lies in finding an optimal balance between large amounts of withdrawals and longevity of the process. This problem has interpretations in areas such as storage and inventory modelling (\cite{shreve-et-al_1984}), in insurance and financial mathematics (\cite{jeanblanc-shiryaev_1995,paulsen_2007,zhu_2015}), and also in harvesting of populations (\cite{alvarez-shepp_1998}); for example, in the latter two interpretations, a withdrawal strategy is often called a dividend policy and a harvesting plan, respectively. In the existing literature, various models, admissible withdrawal strategies and performance functions have been studied. Without transaction costs, the problem typically results in a singular control problem (\cite{shreve-et-al_1984}), while with fixed transaction costs imposed on each withdrawal it becomes an impulse control problem (\cite{paulsen_2007,bai-paulsen_2010}).

In this work, we consider absolutely continuous controls in a general diffusion model. The novelty of this control problem lies in restricting the control rate process to be less than a level-dependent bound, i.e., a \textit{bound function} applied to the current state of the controlled process, in the spirit of \cite{alvarez-shepp_1998}, \cite{renaud-simard_2021} and \cite{locas-renaud_2024}. In their paper, \cite{alvarez-shepp_1998} have considered two versions of this problem in a logistic diffusion model, respectively with a constant bound function and a linear bound function. Similarly, \cite{renaud-simard_2021} and \cite{locas-renaud_2024} have considered a simple Brownian motion model with a linear bound function and with a nondecreasing and concave bound function, respectively. Previously, \cite{zhu_2015} had considered a general diffusion model but with a constant bound function. Consequently, our paper generalizes these works by tackling the problem in a general diffusion model with a nondecreasing and concave bound function. 

Our main contribution is a solution to the control problem described above, which is achieved by using a \textit{probabilistic} guess-and-verify approach. This method has the advantage of identifying explicitly the dynamics of an optimal strategy and providing an analytical expression for the value function of the control problem. Indeed, we compute first the performance function of an arbitrary refraction strategy, i.e., a bang-bang strategy in which withdrawals are made at the highest possible level-dependent rate above a given threshold and no withdrawals are made below the threshold. Using this semi-explicit analytical form, we then identify the (candidate) optimal threshold level; this is the most challenging step. The final step is the verification procedure, that is a confirmation that this candidate optimal refraction strategy is in fact optimal within the set of all admissible withdrawal strategies.

Another contribution worth mentioning is the analytical study of a functional ($I_F$ below) needed to identify the optimal barrier level. This function is given by the application of the resolvent operator of the underlying diffusion (without killing) to the bound function ($F$ below). In \cite{renaud-simard_2021}, this expectation was computed explicitly, thanks to the combined simplicity of the Brownian model and the linear bound function. In \cite{locas-renaud_2024}, also in a Brownian model but with a general nondecreasing and concave bound function, in which case no explicit expression for this functional is expected, a significant amount of effort was devoted to proving that it is a concave function. Our approach, in a general diffusion model, is based on the notion of concave envelope and on classical comparison results for solutions of ordinary differential equations. As in \cite{locas-renaud_2024}, we prove that this function is concave, which means in particular that the resolvent operator preserves concavity.

Finally, we illustrate our results for some examples of the underlying diffusion model and of the bound function. In particular, when both the drift function and the bound function are affine, we are able to compute explicitly $I_F$ and provide explicit expressions for all the quantities involved. Although it should be noted that the logistic diffusion model, as considered in \cite{alvarez-shepp_1998}, does not fall exactly under the umbrella of our assumptions (the volatility can be zero in that model), we analyze numerically a very similar model.

The rest of the paper is organized as follows. In Section 2, we present the model and the control problem, and then we state our main result (Theorem 2.1) which is the solution to the control problem. Section 3 is devoted to the analytical study of the abovementioned functional ($I_F$) and it is culminating with the proof of its concavity (see Proposition 3.5). In Section 4, we obtain an analytical expression for the performance of an arbitrary barrier strategy and use the concavity of $I_F$ to identify the optimal barrier level, while in Section 5 we prove that this candidate optimal barrier strategy is an optimal strategy for the control problem under study. Finally, in Section 6, we present a few examples of models and bound functions. 

\section{Model, control problem and main result}

Let $(\Omega, \F, (\F_t)_{t\geq 0}, \p)$ be a complete and filtered probability space satisfying the \textit{usual conditions}. We assume that the underlying (uncontrolled) process evolves according to the following stochastic differential equation (SDE):
\begin{equation}\label{eq:main-sde}
\diff Y_t = \mu(Y_t) \diff t + \sigma (Y_t) \diff W_t ,
\end{equation}
where $(W_t)_{t\geq 0}$ denotes a standard Brownian motion adapted to the filtration $(\F_t)_{t\geq 0}$. Assumptions on the functions $\mu$ and $\sigma$ will be provided later; for now, we assume they are regular enough for the existence of a unique strong solution to the above SDE.

As we are interested in absolutely continuous control process/strategies, we identify each strategy with its adapted density process $\ell = (\ell_t)_{t\geq 0}$, which is the rate of withdrawals. Then, the associated controlled process is given by
\begin{equation} \label{eq:X^ell}
	\diff X^\ell_t = \mu(X^\ell_t) \diff t + \sigma(X^\ell_t) \diff W_t - \ell_t \diff t ,
\end{equation}
where $t \mapsto \int_0^t \ell_s \diff s$ represents the amount of withdrawals made up to time $t$. Note that, for any strategy $\ell$, the process $X^\ell$ has continuous sample paths, almost surely.

Let $F \colon [0,\infty) \to \R$ be a nondecreasing and concave function such that $F(0)\geq0$; it is the bound function. As in \cite{locas-renaud_2024}, we say that a control strategy, as defined above, is \textit{admissible} if its density process is such that $\ell_t \in [0, F(X^\ell_t)]$, for all $t\geq 0$. Accordingly, we denote the set of admissible strategies/density rates by $\Pi$.

In what follows, we will write $\E_x$ as the associated expectation operator when $X^\ell$ starts from $X^\ell_0 = x \in \R$, and $\E$ when it starts from $X^\ell_0 = 0$. We will also use the notation $\E_x[V;A]=\E_x[V\Ind_A]$, for a random variable  $V$  and an event $A$.
	
Finally, given a time-preference rate $q>0$, we define the performance function of a strategy $\ell \in \Pi$ by
	\begin{equation}\label{PV_general}
	J(x,\ell) = \E_x\left[ \int_0^{\tau^\ell_0} e^{-qt} \ell_t \diff t \right] , \quad x \in \R ,
	\end{equation}
where $\tau^\ell_0 = \inf\lbrace t\geq 0 \colon X^\ell_t \leq 0 \rbrace$ is the termination time. Note that $J(x,\ell)=0$ for all $x \leq 0$ and for any $\ell \in \Pi$.

The objective of our control problem is to maximise this performance function over all admissible strategies. More precisely, we want to find an expression for the value function
	\[
	V(x) := \sup_{\ell \in \Pi} J(x,\ell) , \quad x \in \R ,
	\]
	and we want to find an optimal control strategy $\ell^\ast$, i.e., an admissible strategy such that $V(x) = J(x,\ell^\ast)$, for all $x \in \R$.

We will reach the above mentioned objectives of computing $V(x)$ and finding an optimal strategy, under the following standing assumptions.
 \begin{assump}\label{assumptions}
     \begin{enumerate}
         \item The drift function $\mu \colon [0,\infty) \to \R$ is concave and it is such that $\mu(0)>0$. In addition, we assume that $\mu'(0+) < q$.
\item The diffusion function $\sigma \colon [0,\infty) \to \R$ is Lipschitz continuous and it is bounded away from zero, i.e., there exists $\epsilon>0$ such that $\sigma (x) \geq \epsilon$ for all $x \in [0,\infty)$.
\item The bound function $F \colon [0,\infty) \to \R$ is a nondecreasing and concave function such that $F(0)\geq0$.
\end{enumerate}
\end{assump}

These assumptions are sufficient for the existence of unique strong solutions $X^\ell$ to the associated stochastic differential equations, as given in~\eqref{eq:X^ell}; see, e.g., Theorem IV.3.1 in \cite{ikeda-watanabe_1989}.

\subsection*{Refraction strategies, an expectation and differential operators}\label{Sec:refract-IF-ODEs}

The experienced reader is expecting a bang-bang strategy to be optimal for this control problem. In this direction, we now define the family of refraction strategies; in \cite{locas-renaud_2024}, these strategies were called \textit{barrier mean-reverting strategies}.

More precisely, for a given $b\geq 0$, we define the refraction strategy with barrier $b$ by the following density process:
	\begin{equation}\label{eq:l^b}
	\ell^b_t = F(X^b_t) \Ind_{\lbrace X^b_t \geq b\rbrace} , \quad t\geq 0,
	\end{equation}
where $X^b=(X^b_t)_{t \geq 0}$ is given by the solution of
\begin{equation}\label{eq:X^b}
	\diff X^b_t = \left( \mu(X^b_t) - F(X^b_t) \Ind_{\lbrace X^b_t \geq b\rbrace} \right) \diff t + \sigma(X^b_t) \diff W_t.
\end{equation}

This is indeed a specific example of the SDE given in~\eqref{eq:X^ell}. The term \textit{refraction} comes from the fact that the drift function is either $\mu(\cdot)$ or $\mu(\cdot)-F(\cdot)$, whether the process is below or above $b$.

We denote the performance function of the refraction strategy at level $b$ by
\begin{equation}\label{Barrier:PV}
J_b(x) = \E_x\SBRA{\int_0^{\tau^b_0} e^{-qt} F(X^b_t) \Ind_{\lbrace X^b_t \geq b\rbrace} \diff t } , \quad x \in \R,
\end{equation}
where $\tau^b_0 = \inf\lbrace t\geq 0 \colon X^b_t \leq 0 \rbrace$.

\begin{remark}
Note that, for any $b\geq 0$, the process $X^b$ is the unique strong solution of~\eqref{eq:X^b} since its drift function is a piecewise Lipschitz continuous function; see, e.g., \cite{leobacher-szolgyenyi_2016}. In particular, a refraction strategy at level $b$ is admissible.
\end{remark}

Note that, if $b=0$, then $X^0$ is the controlled process associated with the strategy consisting in withdrawing at the maximal rate until the termination time. Note that, up to its termination time $\tau_0^0$, this process has the same distribution as $X=(X_t)_{t \geq 0}$ given by
\begin{equation}\label{eq:X}
\diff X_t = (\mu(X_t) - F(X_t)) \diff t + \sigma(X_t) \diff W_t .
\end{equation}

Now, let us define one of the key functions in our study:
	\begin{equation}\label{IF:definition}
	I_F(x) = \E_x \SBRA{\int_0^{\infty} e^{-qt} F(X_t) \diff t} , \quad x\in \R.
	\end{equation}

In order for $I_F$ to be well defined, we must specify the behaviour of $X$ in $(-\infty, 0)$:

\begin{remark}[Extension of $\mu,\sigma,F$ on $\R$]
\label{remark:extensions}
For $x \in (-\infty,0)$, set
\[
	\mu(x) = \mu(0) + \mu'(0+)x, \quad F(x) = F(0) + F'(0+)x, \quad \sigma(x) = \sigma(0) ,
\]
where $\mu'(0+)$ and $F'(0+)$ are right derivatives at $0$. Note that $\mu$ and $F$ are now concave functions on $\R$, while $\sigma$ is Lipschitz continuous on $\R$.
\end{remark}

The function $I_F$ is of paramount importance in our approach. As alluded to above, the main challenge in our approach is to obtain some analytical properties for $I_F$. This task is undertaken in Section~\ref{Sec:I_F:Properties}.

Finally, denote the infinitesimal generator associated with $X$ by
\[
\Gen_F = \frac{\sigma^2(x)}{2} \frac{\diff^2}{\diff x^2} + (\mu(x) - F(x)) \frac{\diff}{\diff x}.
\]
Similarly, we write $\Gen$ for the infinitesimal generator associated with $Y$ or, equivalently, corresponding to $X$ with $F \equiv 0$. As it is well known (see, e.g., \cite{borodin2002handbook}), we can denote by $\psiq$ the positive and increasing fundamental solution of
\[
(\Gen - q)u(x) = 0 , \quad x \in (0,\infty) ,
\]
with initial conditions $u(0)=0$ and $u^\prime(0+)=1$. Similarly, let $\phiqF$ be the positive and decreasing fundamental solution of 
\[
(\Gen_F - q)u(x) = 0 , \quad x \in (0,\infty) ,
\]
with boundary conditions $u(0) = 1$ and $\lim_{x\to\infty} u(x)=0$. It is also well known that these functions appear in first-passage identities related to the corresponding process. In particular, for $b\geq 0$ and $0 \leq x \leq b$, we have
	\begin{equation}\label{Y:exit:time}
		\E_x\left[ e^{-q \kappa_b} ; \kappa_b < \kappa_0 \right] = \frac{\psi_q(x)}{\psi_q(b)}, 
	\end{equation}
where $\kappa_b = \inf\lbrace t \geq 0 \colon Y_t \geq b \rbrace$ and $\kappa_0 = \inf\lbrace t \geq 0 \colon Y_t \leq 0 \rbrace$, while for $x \geq b$, we have
	\begin{equation}\label{X:exit:time}
		\E_x\left[ e^{-q {\tau_b} } \right] = \frac{\phiqF(x)}{\phiqF(b)} ,
	\end{equation}
where $\tau_b = \inf\lbrace t \geq 0 \colon X_t \leq b \rbrace$.

Here are useful properties that we will use repeatedly in the rest of the paper; see \cite[Lemma 2.4]{ekstrom-lindensjo_2023}.
\begin{lemma}\label{lemma:concave-convex}
Under Assumptions~\ref{assumptions}, $\psiq$ is concave-convex on $(0,\infty)$ with a unique inflection point $\hat{b}>0$ and $\phiqF$ is strictly convex on $(0,\infty)$.
\end{lemma}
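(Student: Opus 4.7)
My plan is to extract sign information on the second derivatives directly from the defining ODEs and their once-differentiated forms, combined with the prescribed boundary behaviour. The key observation is that at any interior zero of $u''$, differentiating the ODE satisfied by $u$ determines the sign of $u'''$ explicitly in terms of $\mu'$, $F'$ and $q$; together with a growth or positivity argument, this is enough to pin down the global shape of $\psiq$ and $\phiqF$.

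For $\psiq$, evaluating $(\Gen - q)\psiq = 0$ at $x = 0^+$ with $\psiq(0) = 0$ and $\psiq'(0+) = 1$ yields $\psiq''(0+) = -2\mu(0)/\sigma^2(0) < 0$ by Assumptions~\ref{assumptions}(1), so $\psiq$ is strictly concave near $0$. Differentiating the ODE once, at any $x_0 > 0$ with $\psiq''(x_0) = 0$,
\[
\frac{\sigma^2(x_0)}{2}\,\psiq'''(x_0) = (q - \mu'(x_0))\,\psiq'(x_0) > 0,
\]
where strict positivity uses $\psiq' > 0$, concavity of $\mu$ (so $\mu'(x_0) \leq \mu'(0+)$), and $\mu'(0+) < q$. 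Hence $\psiq''$ is strictly increasing through each of its zeros, so it changes sign at most once, and only from negative to positive. To exclude $\psiq'' < 0$ on all of $(0,\infty)$: otherwise $\psiq' \leq 1$ so $\psiq(x) \leq x$, but rewriting the ODE as $\psiq'/\psiq \geq q/\mu$ and using the concavity bound $\mu(x) \leq \mu(0) + \mu'(0+) x$ forces $\psiq$ to grow super-linearly (polynomial of exponent $q/\mu'(0+) > 1$ when $\mu'(0+) > 0$, exponential when $\mu'(0+) \leq 0$), contradicting $\psiq(x) \leq x$; alternatively, if $\mu(x_1) \leq 0$ at some finite $x_1$, the ODE at $x_1$ already gives $\psiq''(x_1) > 0$. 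In either case a unique inflection point $\hat{b}\in (0,\infty)$ exists.

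For $\phiqF$, the same differentiation trick applied to $(\Gen_F - q)\phiqF = 0$ gives, at any zero $x_0$ of $\phiqF''$,
\[
\frac{\sigma^2(x_0)}{2}\,\phiqF'''(x_0) = (q - \mu'(x_0) + F'(x_0))\,\phiqF'(x_0).
\]
The first factor is strictly positive (using $\mu'(0+) < q$, concavity of $\mu$, and $F' \geq 0$), while the second is strictly negative since $\phiqF$ is positive and strictly decreasing. Thus $\phiqF''$ is strictly decreasing through every zero, so it can change sign at most once, from positive to negative. Were $\phiqF'' \leq 0$ on some right half-line $[x_1,\infty)$, then $\phiqF'(x) \leq \phiqF'(x_1) < 0$ for $x \geq x_1$ would force $\phiqF(x) \to -\infty$, contradicting positivity; the same bound rules out global concavity. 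Therefore $\phiqF'' > 0$ on $(0,\infty)$. The main obstacle is the asymptotic growth argument ruling out a globally concave $\psiq$; the remainder is a routine sign analysis once the third-derivative identities at putative inflection points are in hand.
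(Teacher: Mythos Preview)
Your argument is correct and self-contained. The paper does not actually prove this lemma; it simply cites \cite[Lemma 2.4]{ekstrom-lindensjo_2023}. Your approach---reading off the sign of $u'''$ at any zero of $u''$ from the once-differentiated ODE, then ruling out the degenerate global shapes by a growth argument (for $\psiq$) or positivity (for $\phiqF$)---is the standard route to such concave--convex shape results and is almost certainly what the cited reference does as well. Providing the details here is a genuine addition.

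One minor technical caveat: writing $\psiq'''(x_0)$ and $\phiqF'''(x_0)$ tacitly assumes $\mu$, $F$ and $\sigma$ are differentiable at $x_0$, whereas Assumptions~\ref{assumptions} only give concavity of $\mu$ and $F$ and Lipschitz continuity of $\sigma$. This is easily patched without changing the logic. Since $u'' = \tfrac{2}{\sigma^2}\,h$ with $h(x) := qu(x) - (\mu(x)-F(x))u'(x)$ and $\sigma^2>0$, the sign of $u''$ near a zero $x_0$ is that of $h$. A first-order expansion of $h$ at $x_0$, using the one-sided derivatives of $\mu$ and $F$ (which exist everywhere by concavity) together with $u''(x_0)=0$, yields
\[
h(x) = \bigl(q - \mu'(x_0\pm) + F'(x_0\pm)\bigr)\,u'(x_0)\,(x-x_0) + o(x-x_0),
\]
so your sign conclusions for $u''$ on either side of $x_0$ follow unchanged; note that $\sigma'$ never enters because it would multiply $u''(x_0)=0$. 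Beyond this cosmetic adjustment the proof is complete.
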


We now state our main result.
\begin{theorem}\label{Thm:b:Optimal}
Under Assumptions~\ref{assumptions}, there exists an optimal refraction level $b^\ast \geq 0$ such that the refraction strategy $\ell^{b^\ast}$ is optimal for the control problem. Consequently, we have
\[
V(x) = \E_x\SBRA{\int_0^{\tau^\ast_0} e^{-qt} F(X^{b^\ast}_t) \Ind_{\lbrace X^{b^\ast}_t \geq b^\ast\rbrace} \diff t } , \quad x\geq 0,
\]
where $X^{b^\ast}$ satisfies the SDE \eqref{eq:X^b} and $\tau^\ast_0 = \inf\lbrace t\geq 0 \colon X^{b^\ast}_t \leq 0 \rbrace$. Also, $V$ is a twice continuously differentiable function characterised by the following dichotomy: 
\begin{itemize}
    \item If $I_F^\prime(0+) - I_F(0) \phiqFprime(0+) > 1$, then $b^\ast>0$ is a root of
\[
\frac{\psiq(b)}{\psiq^\prime(b)} = I_F(b) + \left(1- I_F^\prime(b) \right) \frac{\phiqF(b)}{\phiqFprime(b)},
\]
and $V$ is a concave function given by:
\[
V(x) =
\begin{cases}
\frac{\psiq(x)}{\psiq^\prime(b^\ast)} & \text{for $x \leq b^\ast$,}\\
I_F(x) - \phiqF(x) \left( \frac{1-I_F^\prime(b^\ast)}{\phiqFprime(b^\ast)} \right) & \text{for $x \geq b^\ast$}.
\end{cases}
\]
\item If $I_F^\prime(0+) - I_F(0) \phiqFprime(0+) \leq 1$, then $b^\ast = 0$ and 
\[
V(x) = I_F(x) - I_F(0) \phiqF(x), \quad x \geq 0 .
\]
\end{itemize}
\end{theorem}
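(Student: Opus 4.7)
The proof follows the probabilistic guess-and-verify scheme announced in the introduction, and I envisage three steps.

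\emph{Step 1 (performance of a refraction strategy).} By the strong Markov property at the first hitting time of the barrier $b$, the first-passage identities \eqref{Y:exit:time}--\eqref{X:exit:time}, and the resolvent-type decomposition
\[
I_F(x) = \E_x\SBRA{\int_0^{\tau_b} e^{-qt}F(X_t)\diff t} + \E_x\SBRA{e^{-q\tau_b}}\, I_F(b), \qquad x\geq b,
\]
(itself a consequence of the strong Markov property of $X$), one obtains
\[
J_b(x) = \begin{cases} \dfrac{\psiq(x)}{\psiq(b)}\, J_b(b), & 0\leq x\leq b,\\[6pt] I_F(x) - \dfrac{\phiqF(x)}{\phiqF(b)}\bigl(I_F(b)-J_b(b)\bigr), & x\geq b, \end{cases}
\]
with the unknown constant $J_b(b)$ fixed by $C^{1}$-matching at $b$ (which is automatic because $\sigma$ is nondegenerate).

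\emph{Step 2 (optimal barrier).} Differentiating the previous expression in $b$, or equivalently imposing the super-contact condition $V'(b^\ast)=1$ of the HJB on top of the $C^1$-gluing, leads to the implicit equation
\[
\frac{\psiq(b^\ast)}{\psiq'(b^\ast)} = I_F(b^\ast) + \bigl(1-I_F'(b^\ast)\bigr)\frac{\phiqF(b^\ast)}{\phiqFprime(b^\ast)}.
\]
Existence, uniqueness, and the stated dichotomy for $b^\ast$ would then be extracted from a careful sign and monotonicity analysis of the difference between the two sides on $[0,\infty)$, combining the concavity of $I_F$ from Proposition~3.5 (proved in Section~\ref{Sec:I_F:Properties}), the concave--convex shape of $\psiq$ with inflection point $\hat b$ (Lemma~\ref{lemma:concave-convex}), and the strict convexity of $\phiqF$. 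The boundary quantity $I_F'(0+)-I_F(0)\phiqFprime(0+)$ is precisely $V'(0+)$ associated with the degenerate choice $b^\ast=0$, which is why its comparison with $1$ dictates the dichotomy.

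\emph{Step 3 (verification).} With $V$ defined by the two-branch formula, $C^{1}$-matching holds by construction, while $C^{2}$-smoothness at $b^\ast$ follows from the ODEs $(\Gen-q)\psiq=0$ and $(\Gen_F-q)\phiqF=0$ together with the resolvent identity $(\Gen_F-q)I_F=-F$. Concavity of $V$, hence $V'\geq 1$ on $[0,b^\ast]$ and $V'\leq 1$ on $[b^\ast,\infty)$, is inherited from the concavity of $I_F$ (Proposition~3.5) and from that of $\psiq$ on $[0,\hat b]$ after establishing $b^\ast\leq\hat b$. Since on $(b^\ast,\infty)$ one computes $(\Gen-q)V(x) = F(x)\bigl(V'(x)-1\bigr)\leq 0$, the HJB
\[
\sup_{\ell\in[0,F(x)]}\braces{(\Gen-q)V(x) + \ell\bigl(1-V'(x)\bigr)} = 0, \qquad x>0,
\]
is satisfied, with supremum attained by $\ell=F(x)\Ind_{\{x\geq b^\ast\}}$. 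A standard application of Itô's formula to $e^{-qt}V(X^\ell_t)$, followed by optional stopping at $\tau^\ell_0$ and a truncation/localization, then yields $V(x)\geq J(x,\ell)$ for every $\ell\in\Pi$, with equality for $\ell=\ell^{b^\ast}$.

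I expect the main obstacle to be Step~2: isolating the candidate $b^\ast$ and proving uniqueness of the root of the first-order equation, consistently with the dichotomy. As flagged in the introduction, this is where the delicate analytical properties of $I_F$ (notably its concavity, Proposition~3.5) must interact with the inflection geometry of $\psiq$ and the strict convexity of $\phiqF$; once $b^\ast$ is in hand and the concavity of $V$ is established, the HJB and the Itô-based verification are routine.
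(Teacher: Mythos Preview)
Your proposal is correct and follows essentially the same three-step guess-and-verify scheme as the paper (Sections~4--5, resting on the analytical properties of $I_F$ from Section~\ref{Sec:I_F:Properties}). Two small caveats worth noting: the paper only establishes \emph{existence} of a root $b^\ast\in(0,\hat b]$ via an intermediate-value argument (Proposition~\ref{prop_exist_b}), not uniqueness as you claim; and in the degenerate case $b^\ast=0$ the paper does \emph{not} argue via concavity of $V$ (which would require $I_F(0)\geq 0$, not guaranteed under the extension of $F$ to $(-\infty,0)$) but instead proves $J_0'\leq 1$ directly from the dichotomy hypothesis $I_F'(0+)-I_F(0)\phiqFprime(0+)\leq 1$ combined with the concavity of $I_F$ and monotonicity of $\phiqFprime$.
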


The remainder of the paper is devoted to proving Theorem~\ref{Thm:b:Optimal}. In particular, in Section~\ref{Sect:refraction}, we will compute the performance function of an arbitrary refraction strategy. 

\section{Analytical properties of $I_F$}\label{Sec:I_F:Properties}

As mentioned above, our proof of Theorem~\ref{Thm:b:Optimal} follows a probabilistic guess-and-verify approach based on the computation of the performance function of an arbitrary refraction strategy. The function $I_F$ defined in~\eqref{IF:definition}, and based on the process $X$ defined in~\eqref{eq:X}, will appear in that expression and its analytical properties, especially concavity, are needed to prove the existence of an optimal threshold.

Before moving forward, note that $I_F$ takes finite values:
\begin{proposition}\label{Prop:Defined}
For $x \in \R$, we have $|I_F(x)| < \infty$.
\end{proposition}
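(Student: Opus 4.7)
The plan is to control the integrand $F(X_t)$ by a function whose discounted integral over $[0,\infty)$ is finite, which reduces the problem to an exponential moment estimate for $X$. A uniform linear growth bound for $F$ comes essentially for free: since $F$ is concave on $\R$ after the extension of Remark~\ref{remark:extensions}, the tangent-line inequality gives $F(y) \leq F(0) + F'(0+) y$ for all $y \in \R$. Monotonicity of $F$ yields $F(y) \geq F(0) \geq 0$ for $y \geq 0$, while the extension makes $F(y) = F(0) + F'(0+) y$ for $y \leq 0$. Combined, these give $|F(y)| \leq C_F(1 + |y|)$ for some $C_F > 0$. Note also that $F'(0+) \geq 0$, a consequence of the monotonicity of $F$ that will be crucial below.

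The core of the argument is to establish an exponential moment bound $\E_x\SBRA{\sqrt{1 + X_t^2}} \leq K(1 + |x|) e^{B t}$ with a rate $B \in [0,q)$. I would apply Ito's formula to the smoothed absolute value $h(y) := \sqrt{1+y^2}$, for which $|h'(y)| \leq 1$, $h''(y) = (1+y^2)^{-3/2}$, and thus $h''(y)\sigma^2(y) \leq C_\sigma$ by the linear growth of $\sigma$. The drift contribution $h'(y)(\mu-F)(y)$ requires a case analysis by the sign of $y$: for $y \geq 0$, concavity of $\mu$ gives $\mu(y) \leq \mu(0) + \mu'(0+) y$ combined with $F(y) \geq 0$, producing an effective linear coefficient $\mu'(0+)^+$ (the positive part); for $y \leq 0$, the extension makes $(\mu-F)(y) = (\mu(0)-F(0)) + (\mu'(0+) - F'(0+)) y$ affine, and multiplying by $h'(y) = y/h(y) \in (-1, 0]$ produces an effective coefficient $(\mu'(0+) - F'(0+))^+$, which is bounded by $\mu'(0+)^+$ thanks to $F'(0+) \geq 0$. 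Altogether,
\[
\Gen_F h(y) \leq A + B h(y) \quad \text{for all } y \in \R,
\]
with $B \leq \mu'(0+)^+ < q$. A standard localization at $\tau_n := \inf\lbrace t \geq 0 : |X_t| \geq n \rbrace$ removes the local martingale part upon taking expectations, and Gronwall's lemma applied to $t \mapsto \E_x[h(X_{t \wedge \tau_n})]$, followed by Fatou as $n \to \infty$, yields the claimed bound.

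Combining the two steps gives $\E_x[|F(X_t)|] \leq C_F \E_x[h(X_t)] \leq C_F K (1+|x|) e^{B t}$, so that by Fubini
\[
|I_F(x)| \leq \int_0^\infty e^{-qt}\E_x[|F(X_t)|] \diff t \leq \frac{C_F K (1+|x|)}{q - B} < \infty,
\]
which is the claim.

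The main obstacle is the moment bound: the drift $\mu - F$ is a difference of two concave functions, hence neither \emph{a priori} concave, convex, nor Lipschitz, and coarse growth estimates can easily produce an effective rate exceeding $q$. The careful sign-by-sign bookkeeping, together with the fact that $F'(0+) \geq 0$ (from monotonicity), is what pins the rate to $\mu'(0+)^+$, which is strictly below $q$ by Assumption~\ref{assumptions}(1). All remaining ingredients — Ito's formula, localization, Gronwall, and Fatou — are routine.
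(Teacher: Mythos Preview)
Your proof is correct and follows essentially the same approach as the paper: both apply It\^o's formula to a smooth Lyapunov function dominating $|x|$ (the paper uses a piecewise-defined $f$ equal to $|x|$ outside $[-1,1]$, you use $\sqrt{1+y^2}$), bound the drift term via the concavity of $\mu$ and nonnegativity of $F$ on $[0,\infty)$ to extract an effective growth rate $\mu'(0+)^+ < q$, and conclude by integrating $e^{-qt}\E_x[|X_t|]$. The only cosmetic difference is that the paper applies It\^o directly to $e^{-\tilde q t}f(X_t)$ with $\tilde q = \max(\mu'(0+),q/2)$, whereas you derive a differential inequality $\Gen_F h \leq A + Bh$ and invoke Gronwall; these are equivalent formulations.
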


A technical proof of this last proposition is provided in Appendix~\ref{App:Defined}. 

\begin{proposition}\label{rk:IF-mu-F-affine}
Under Assumptions~\ref{assumptions}, we have the following:
\begin{enumerate}
\item If $\mu$ and $F$ are affine functions, then $I_F$ is affine and given by
    \begin{equation}\label{eq:IF-mu-F-affine}
I_F(x)=\frac{F(0)}{q}+\frac{F'(0)}{q(q-\mu'(0)+F'(0))}\PAR{qx+\mu(0)-F(0)}, \quad x\in\R.
\end{equation} 
\item In general, for $x \leq 0$, we have
\begin{equation}\label{IF-under-zero}
I_F(x) = \alpha_0 + \beta_0 x + \left( I_F(0)-\alpha_0 \right) \phiqFminus(x),
\end{equation}
where $\beta_0:=\frac{F^\prime(0+)}{q-\mu^\prime(0+)+F^\prime(0+) }$, $\alpha_0:=\frac{\mu(0)}{q} \beta_0 + \frac{F(0)}{q} (1-\beta_0)$, and $\phiqFminus$ is the unique increasing solution of
\[
(\Gen_F - q) u(x) = 0, \quad x \in (-\infty,0) ,
\]
with boundary conditions $u(0) = 1$ and $\lim_{x \to -\infty}u(x) = 0$.
\end{enumerate}
\end{proposition}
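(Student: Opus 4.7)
The plan rests on a single observation: wherever both $\mu$ and $F$ are affine, the function $G(x) := \alpha_0 + \beta_0 x$ is a particular solution of the inhomogeneous ODE $(\Gen_F - q) u(x) = -F(x)$. Indeed, substituting $u(x) = \alpha + \beta x$ and matching the coefficients of $x^1$ and $x^0$ yields two linear equations whose unique solution is exactly the pair $(\alpha_0, \beta_0)$ given in the statement; the relevant denominator $q - \mu'(0+) + F'(0+)$ is strictly positive because $\mu'(0+) < q$ (Assumption~\ref{assumptions}) and $F'(0+) \geq 0$ (as $F$ is nondecreasing).

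For part (1), when $\mu$ and $F$ are assumed affine on $[0,\infty)$, they coincide with their extensions from Remark~\ref{remark:extensions}, so both are affine on all of $\R$. I would then argue directly: writing $m(t) := \E_x[X_t]$, the linear SDE yields $m'(t) = (\mu(0) - F(0)) + (\mu'(0) - F'(0)) m(t)$ with $m(0) = x$, and since $F(X_t) = F(0) + F'(0) X_t$, we get
\begin{equation*}
I_F(x) = \frac{F(0)}{q} + F'(0) \int_0^{\infty} e^{-qt} m(t)\, \diff t .
\end{equation*}
Solving the linear ODE explicitly and integrating against $e^{-qt}$ (the integral converges by positivity of $q-\mu'(0)+F'(0)$) then reduces, after straightforward algebra, to formula~\eqref{eq:IF-mu-F-affine}.

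For part (2), I would condition on the first time $X$ reaches level zero from below, $\tilde{\tau}_0 := \inf\{t \geq 0 : X_t = 0\}$. For $x < 0$, path-continuity gives $X_{\tilde{\tau}_0} = 0$ on $\{\tilde{\tau}_0 < \infty\}$, and the strong Markov property yields
\begin{equation*}
I_F(x) = \E_x\left[\int_0^{\tilde{\tau}_0} e^{-qt} F(X_t)\, \diff t\right] + \phiqFminus(x)\, I_F(0),
\end{equation*}
where $\E_x[e^{-q\tilde{\tau}_0}] = \phiqFminus(x)$ by the defining characterisation of $\phiqFminus$. Since $\mu$ and $F$ are affine on $(-\infty, 0)$ (by Remark~\ref{remark:extensions}), the affine function $G$ solves $(\Gen_F - q)G = -F$ throughout $(-\infty, 0)$. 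Applying It\^o's formula to $e^{-qt} G(X_t)$ on the interval $[0, \tilde{\tau}_0 \wedge n]$, the stochastic integral is a true martingale because $G'(x) \equiv \beta_0$ is constant and $\sigma$ is bounded, and taking expectations produces
\begin{equation*}
\E_x\left[e^{-q(\tilde{\tau}_0 \wedge n)} G(X_{\tilde{\tau}_0 \wedge n})\right] - G(x) = -\E_x\left[\int_0^{\tilde{\tau}_0 \wedge n} e^{-qs} F(X_s)\, \diff s\right].
\end{equation*}
Letting $n \to \infty$ gives $\alpha_0\, \phiqFminus(x) - G(x) = -\E_x\left[\int_0^{\tilde{\tau}_0} e^{-qs} F(X_s)\, \diff s\right]$, and substituting this identity into the strong-Markov decomposition above produces exactly~\eqref{IF-under-zero}.

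The principal technical obstacle is justifying the $n \to \infty$ limit in the It\^o identity: dominated convergence must be applied to $e^{-q(\tilde{\tau}_0 \wedge n)} G(X_{\tilde{\tau}_0 \wedge n})$, which requires controlling $\E_x\left[e^{-qt} |X_t|\, \Ind_{\{t < \tilde{\tau}_0\}}\right]$ as $t \to \infty$. This follows from the fact that on $(-\infty, 0)$ the affine (Ornstein--Uhlenbeck-type) dynamics produce at-most-exponential moment growth with rate $|\mu'(0+) - F'(0+)|$, which is strictly dominated by the discount rate $q$ under the positivity condition $q - \mu'(0+) + F'(0+) > 0$ combined with $F'(0+) \geq 0$ already noted above.
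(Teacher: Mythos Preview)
Your proof is correct and, for part (1), identical to the paper's. For part (2) both arguments rest on the same strong-Markov decomposition at the first hitting time of zero and on the fact that $G(x)=\alpha_0+\beta_0 x$ is the affine particular solution of $(\Gen_F-q)u=-F$ on $(-\infty,0)$; the paper packages this by introducing an auxiliary Ornstein--Uhlenbeck process $\tilde X$ with the extended coefficients on all of $\R$, applying part (1) to obtain $\tilde I_F(x)=\alpha_0+\beta_0 x$, and then relating $I_F$ to $\tilde I_F$ via strong Markov, whereas you compute the killed integral directly by It\^o's formula applied to $e^{-qt}G(X_t)$ --- a slightly more direct route to the same identity.

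One small imprecision: your transversality claim that the OU moments grow at rate $|\mu'(0+)-F'(0+)|$ and that $q$ dominates this absolute value is not quite what the assumptions give (only $q>\mu'(0+)-F'(0+)$ is available). The argument is unaffected, however, because when $\mu'(0+)-F'(0+)\le 0$ the first and second moments of the OU process are bounded or grow polynomially, so the effective growth rate is $\max(\mu'(0+)-F'(0+),0)<q$ in all cases.
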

\begin{proof}
If $\mu$ and $F$ are affine functions, then we can write
\begin{align*}
\mu(x)=\mu(0)+\mu'(0)x\quad\text{and}\quad F(x)=F(0)+F'(0)x,\quad x\in\R.
\end{align*}
The corresponding dynamics for $X$ is
\[ 
\diff X_t=\SBRA{(\mu(0)-F(0))+(\mu'(0)-F'(0))X_t}\diff t+\sigma(X_t)\diff W_t ,
\]
and thus we have for $t\geq 0$
\[
\E_x[X_t]=x+(\mu(0)-F(0))t+(\mu'(0)-F'(0))\int_0^t\E_x[X_s]\diff s.
\]
When $\mu'(0)\neq F'(0)$, we have
\[
\E_x[X_t]=\PAR{x+\frac{\mu(0)-F(0)}{\mu'(0)-F'(0)}} \mathrm{e}^{\PAR{\mu'(0)-F'(0)}t}-\frac{\mu(0)-F(0)}{\mu'(0)-F'(0)}
 \]
and, when $\mu'(0)=F'(0)$, we have $\E_x[X_t]=x+(\mu(0)-F(0))t$.

Also, if $\mu$ and $F$ are affine, then
\[
I_F(x)=\frac{F(0)}{q}+F'(0)\int_0^{\infty}\mathrm{e}^{-qt}\E_x[X_t]\diff t
\]
and, since $\mu'(0)<q$ by Assumptions~\ref{assumptions}, we deduce Equation~\eqref{eq:IF-mu-F-affine}. This proves (1).

In the general case, i.e., under Assumptions~\ref{assumptions} together with the extensions on $(-\infty,0)$ in Remark \ref{remark:extensions}, let us consider the Ornstein-Uhlenbeck process $\tilde{X}$ given by
\begin{equation}\label{eq:Xtilde}
\diff \tilde{X}_t = \left[ \mu(0)- F(0) + \left(\mu^\prime(0+) - F^\prime(0+) \right) \tilde{X}_t \right] \diff t + \sigma(0) \diff W_t .
\end{equation}
Note that if $x\leq 0$ then $\tilde X$ and $X$ are equal in law (with respect to $\p_x$) up to time $\rho_0:=\inf\BRA{t \geq 0: \tilde{X}_t \geq 0}$. In addition, as for~\eqref{X:exit:time}, if $x\leq 0$, then we have
\[
\E_x\left[ e^{-q {\rho_0} } \right] = \phiqFminus(x) ,
\]
where $\phiqFminus$ is the unique increasing solution of
\[
(\Gen_F - q) u(x) = 0, \quad x \in (-\infty,0) ,
\]
with boundary conditions $u(0) = 1$ and $\lim_{x \to -\infty}u(x) = 0$. 

Successive applications of the strong Markov property give, for $x \leq 0$,
\begin{equation}\label{eq:comment-helene}
I_F(x) = \tilde{I}_F(x) - \tilde{I}_F(0) \phiqFminus(x) + I_F(0) \phiqFminus(x) ,
\end{equation}
where $\tilde{I}_F(x) := \E_x \SBRA{\int_0^{\infty} e^{-qt} \left(F(0) + F^\prime(0+) \tilde{X}_t \right) \diff t}$.

Consequently, from (1), we have
\[
\tilde{I}_F(x) = \alpha_0 + \beta_0 x ,
\]
where
\begin{equation}\label{alpha:0}
\alpha_0 = \frac{1}{q-\mu^\prime(0+)+F^\prime(0+)} \left[ F(0) + \frac{\mu(0) F^\prime(0+)-\mu^\prime(0+) F(0)}{q} \right] = \frac{\mu(0)}{q} \beta_0 + \frac{F(0)}{q} (1-\beta_0) .
\end{equation}
This proves (2).
\end{proof}

\begin{proposition}\label{prop:I_F of class C^2}
The function $I_F$ is twice continuously differentiable on $\R$. Moreover, $I_F$ satisfies
\[
(\Gen_F - q)I_F(x) = - F(x), \quad x \in \R.
\]
\end{proposition}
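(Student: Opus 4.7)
The plan is to combine the strong Markov property of $X$ with classical elliptic regularity on arbitrary bounded open intervals; this local argument will produce $C^2$ smoothness and the ODE simultaneously, and then glue to all of $\R$.

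As a preliminary, I would first check that $I_F$ is continuous and has at most linear growth on $\R$. Under Assumptions~\ref{assumptions} together with the extensions of Remark~\ref{remark:extensions}, the coefficients $\mu-F$ and $\sigma$ of the SDE~\eqref{eq:X} are Lipschitz on $\R$; standard SDE estimates then give $L^2$-continuity of the flow $x \mapsto X_t$ (under $\p_x$) uniformly on compact time sets, together with moment bounds of the form $\E_x[|X_t|^2] \leq C(1+|x|^2)e^{Kt}$. Because concavity forces $|F(y)| \leq A(1+|y|)$, the integrand $e^{-qt}F(X_t)$ is dominated uniformly for $x$ in any compact set, and both continuity and linear growth of $I_F$ follow from dominated convergence.

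For the main step, fix an arbitrary bounded open interval $(a,b) \subset \R$ and, for $x \in (a,b)$, let $\tau_{a,b}$ denote the first exit time of $X$ from $(a,b)$. The strong Markov property of $X$ applied at $\tau_{a,b}$, together with the finiteness from Proposition~\ref{Prop:Defined}, would yield
\begin{equation*}
I_F(x) = \E_x\SBRA{\int_0^{\tau_{a,b}} e^{-qs} F(X_s) \diff s} + \E_x\SBRA{e^{-q\tau_{a,b}} I_F(X_{\tau_{a,b}})} .
\end{equation*}
This is precisely the Feynman-Kac stochastic representation of the solution to the Dirichlet problem $(\Gen_F - q)u = -F$ on $(a,b)$ with boundary values $u(a)=I_F(a)$ and $u(b)=I_F(b)$. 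Under Assumptions~\ref{assumptions}, $\sigma^2/2$ is bounded away from zero on $[a,b]$ (uniform ellipticity) and the coefficients $\sigma^2/2$, $\mu-F$, and the source $F$ are continuous there; classical elliptic regularity theory therefore furnishes a unique $C^2$ solution to this boundary value problem which, by the Feynman-Kac identification, coincides with $I_F$ on $[a,b]$. Hence $I_F \in C^2(a,b)$ and satisfies the stated ODE there.

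Since $(a,b)$ is arbitrary, this yields $I_F \in C^2(\R)$ together with $(\Gen_F - q)I_F = -F$ on all of $\R$. The hard part will be the rigorous justification of the strong Markov identity displayed above, since $I_F$ is not bounded a priori (by Proposition~\ref{rk:IF-mu-F-affine}\,(2), $I_F$ grows linearly at $-\infty$); I would handle this by truncating at $\tau_{a,b} \wedge T$, applying the Markov property at this bounded stopping time, and passing $T \to \infty$ via dominated convergence using the linear growth bound on $I_F$ and the moment bounds recorded in the preliminary step.
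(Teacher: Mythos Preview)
Your approach is correct and genuinely different from the paper's. The paper proceeds piecewise: on $(0,\infty)$ it decomposes $I_F = J_0 + I_F(0)\phiqF$ using the strong Markov property at $\tau_0$ and cites resolvent theory for the regularity of $J_0$; on $(-\infty,0)$ it uses the explicit formula~\eqref{IF-under-zero} from Proposition~\ref{rk:IF-mu-F-affine}; and the delicate part is matching at $x=0$, where it runs a viscosity-solution-style contradiction argument with quadratic test functions to show $I_F'(0-)=I_F'(0+)$, then reads off $I_F''(0-)=I_F''(0+)$ from the ODE on each side. Your Feynman--Kac/Dirichlet argument on an arbitrary interval $(a,b)$ (which may straddle $0$) handles the junction automatically and is considerably cleaner for the bare statement of Proposition~\ref{prop:I_F of class C^2}. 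What the paper's route buys is the explicit piecewise representation of $I_F$, which it uses later (e.g., in the concavity analysis on $(-\infty,0)$); your route does not produce that as a by-product.

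Two small remarks. First, your worry about justifying the strong Markov identity via truncation is unnecessary: the proof of Proposition~\ref{Prop:Defined} gives $\E_x\big[\int_0^\infty e^{-qt}|F(X_t)|\,\diff t\big]<\infty$, so you may split the integral at $\tau_{a,b}$ and apply the strong Markov property directly; since $X_{\tau_{a,b}}\in\{a,b\}$, only the finite values $I_F(a),I_F(b)$ enter and no growth control on $I_F$ is needed. Second, in one dimension ``elliptic regularity'' reduces to standard linear ODE theory: with $\sigma^2/2$ bounded away from zero and $\mu-F$, $F$ continuous on $[a,b]$, the boundary value problem $(\Gen_F-q)u=-F$, $u(a)=I_F(a)$, $u(b)=I_F(b)$ has a unique $C^2([a,b])$ solution (the homogeneous problem has only the trivial solution because $q>0$, by the maximum principle or the probabilistic representation), and It\^o's formula identifies it with your stochastic expression. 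It would read better to phrase it that way rather than invoking PDE machinery.
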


\begin{proof}
For $x \geq 0$, we have
\[
\E_x\left[ e^{-q {\tau_0} } \right] = \frac{\phiqF(x)}{\phiqF(0)} = \phiqF(x),
\]
where $\tau_0 = \inf\lbrace t \geq 0 \colon X_t \leq 0 \rbrace$. This is a special case of Equation~\eqref{X:exit:time}. Consequently, an application of the strong Markov property yields, for $x \geq 0$,
\[
I_F(x) = J_0(x) + I_F(0) \phiqF(x) ,
\]
where
\begin{equation}\label{Resolvent:J0}
J_0(x) = \E_x \SBRA{\int_0^{\tau_0} e^{-qt} F(X_t) \diff t}
\end{equation}
is the performance function of the refraction strategy with threshold level $b=0$.

On one hand, we have that $\phiqF \in \mathcal{C}^2(0,\infty)$. On the other hand, since the functional $J_0$ is obtained by an application of the resolvent operator (killed at zero) of $X$ to the bound function $F$ (see, e.g., Equations (18)--(20) in \cite{alvarez2003}), we have that $J_0$ is a classical solution to the following second-order initial value problem:
\begin{equation}\label{resolvent:ODE2}
\begin{cases}
(\Gen_F - q) J_0(x) = - F(x), \quad x \in (0,\infty),\\
J_0(0) = 0 .
\end{cases}
\end{equation}
It follows that $J_0 \in \mathcal{C}^2(0,\infty)$. As a consequence, we have also that $I_F \in \mathcal C^2(0,\infty)$ and
\[
(\Gen_F - q) I_F(x) = -F(x), \quad x \in (0,\infty).
\]

For $x \in (-\infty,0)$, we have
\[
\Gen_F = \frac{\sigma^2(0)}{2} \frac{\diff^2}{\diff x^2} + \left[\mu(0)- F(0) + \left(\mu^\prime(0+) - F^\prime(0+) \right) x \right] \frac{\diff}{\diff x} .
\]
From the expression of $I_F$ on $(-\infty,0)$ given in~\eqref{IF-under-zero} of Proposition~\ref{rk:IF-mu-F-affine}, we easily deduce that
\[
(\Gen_F - q) I_F(x) = -F(x), \quad x \in (-\infty,0).
\]

So far, we have shown that $I_F \in \mathcal C^2( \R\setminus\{0\} )$. So, to conclude, it suffices to show that $I_F'(0-) = I_F'(0+)$ and $I_F''(0-) = I_F''(0+)$. Let us proceed by contradiction.

First, suppose $I_F'(0+) < I_F'(0-)$ and let $d \in (I_F'(0+), I_F'(0-) )$. For an arbitrary $M >0$, define the function 
\[\pi_M(x) := I_F(0) + dx - \frac{1}{2}M x^2, \quad x\in \R.\]
Note that $\pi_M(0) = I_F(0)$ and, since $d  \in (  I_F'(0+), I_F'(0-) )$, by continuity, there exists $\epsilon_M > 0$ such that $\pi_M(x) \geq I_F(x)$ for all $x \in [-\epsilon_M,\epsilon_M]$.

Let $h>0$ and set $\tau_M = \inf\BRA{ t\geq 0: X_t \notin [-\epsilon_M,\epsilon_M] } \wedge h$. Using the strong Markov property, we have
\begin{align*}
    \frac{1}{h} \pi_M(0) &= \frac{1}{h} I_F(0) \\
    &= \frac{1}{h} \E \SBRA{ \int_0^{\tau_M} e^{-qt} F(X_t) \diff t + e^{-q\tau_M} I_F(X_{\tau_M}) }\\
    &\leq \frac{1}{h} \E \SBRA{ \int_0^{\tau_M} e^{-qt} F(X_t) \diff t + e^{-q\tau_M} \pi_M(X_{\tau_M}) }.
\end{align*}
Using It\^{o}'s Formula, we can further expand the term $e^{-q\tau_M} \pi_M(X_{\tau_M})$ and hence we obtain
\begin{align*}
    0 & \leq \frac{1}{h}\E\SBRA{ \int_0^{\tau_M} e^{-qt}\left( F(X_t) + ( \mu(X_t) - F(X_t) )( d - M X_t ) - \frac{1}{2}\sigma^2(X_t) M - q \pi_M(X_t) \right) \diff t }.
\end{align*}
By taking the limit as $h \downarrow 0$, and re-arranging terms, we obtain
\[
q \pi_M(0) - (\mu(0) - F(0)) d + \frac{1}{2}\sigma^2(0)M - F(0) \leq 0 .
\]

However, this last inequality is not verified for $M > \frac{2}{\sigma^2(0)}\PAR{F(0) - q \pi_M(0) + (\mu(0)-F(0))d}$, leading to a contradiction. So, we must have $I_F'(0+) \geq I_F'(0-)$. In a very similar way, we can prove that the inequality $I_F'(0+) > I_F'(0-)$ also leads to a contradiction. The details are left to the reader. In conclusion, we  have that $I_F'(0-) = I_F'(0+)$ and hence $I_F \in C^1(\R)$.

Recall that $I_F$ is twice continuously differentiable on $\R\setminus\{0\}$. It is also such that
\[
\PAR{\cL_F-q} I_F=-F , \quad x \in (-\infty,0) \cup (0,\infty) .
\]
Taking the limits when $x\downarrow 0$ and $x\uparrow 0$, and using the continuity at zero of the functions, we obtain
\[
\begin{split}
(\mu(0) - F(0)) I_F'(0-) + \frac12 \sigma^2(0) I_F''(0-) - q I_F(0-) + F(0) &= 0, \\
(\mu(0)-F(0)) I_F'(0+) + \frac12 \sigma^2(0) I_F''(0+) - q I_F(0+) + F(0) &= 0.
\end{split}
\]
We have shown above that $I_F(0+)=I_F(0)$ and $I_F'(0-)=I_F'(0+)$ and, from Equation~\eqref{IF-under-zero} of Proposition~\ref{rk:IF-mu-F-affine}, we easily deduce that $I_F(0-)=I_F(0)$. Therefore, from the above two equalities, we now have that $I_F''(0-)=I_F''(0+)$ and hence the result follows.

\end{proof}

\begin{proposition}\label{Prop:Increasing:Derivative}
For $x \in \R$, we have $I_F'(x) \in [0,1]$.
\end{proposition}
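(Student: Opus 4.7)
The plan is to establish both inequalities via a synchronous coupling argument combined with It\^{o}'s formula applied to the discounted difference of two solutions. Fix $x_1 < x_2$ and let $X^{x_1},X^{x_2}$ denote the unique strong solutions of the SDE~\eqref{eq:X} driven by a common Brownian motion $W$, with respective initial conditions $x_1$ and $x_2$. A standard comparison theorem for one-dimensional SDEs (e.g., Theorem~VI.1.1 in~\cite{ikeda-watanabe_1989}) would give $X^{x_1}_t \leq X^{x_2}_t$ for all $t \geq 0$ almost surely, so $D_t := X^{x_2}_t - X^{x_1}_t \geq 0$ with $D_0 = x_2 - x_1$. Because $F$ is nondecreasing on $\R$ (by the extension in Remark~\ref{remark:extensions}), this already yields
\[
I_F(x_2) - I_F(x_1) = \E\left[\int_0^\infty e^{-qt}\bigl(F(X^{x_2}_t) - F(X^{x_1}_t)\bigr)\,\diff t\right] \geq 0,
\]
and dividing by $x_2 - x_1$ before letting $x_2 \downarrow x_1$ (valid since $I_F \in \mathcal{C}^1(\R)$ by Proposition~\ref{prop:I_F of class C^2}) delivers the lower bound $I_F'(x_1) \geq 0$.

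For the upper bound, my plan is to apply It\^{o}'s formula to $e^{-qt} D_t$, integrate from $0$ to $T$, and take expectations, in order to obtain
\[
\E[e^{-qT} D_T] - D_0 = \E\left[\int_0^T e^{-qt}\Bigl\{-q D_t + \bigl(\mu(X^{x_2}_t) - \mu(X^{x_1}_t)\bigr) - \bigl(F(X^{x_2}_t) - F(X^{x_1}_t)\bigr)\Bigr\}\,\diff t\right],
\]
the stochastic integral having zero expectation by the Lipschitz continuity of $\sigma$ together with standard $L^2$ moment bounds on $D$. A Gronwall-type argument, exploiting both the concavity of $\mu$ and the monotonicity of $F$, would give $\E[D_T] \leq D_0 \exp(\max(\mu'(0+),0)\, T)$; combined with the standing assumption $\mu'(0+) < q$, this ensures $\E[e^{-qT}D_T] \to 0$. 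Letting $T \to \infty$ and rearranging then produces
\[
I_F(x_2) - I_F(x_1) = D_0 + \E\left[\int_0^\infty e^{-qt}\bigl(\mu(X^{x_2}_t) - \mu(X^{x_1}_t) - q D_t\bigr)\,\diff t\right].
\]
Since $\mu$ is concave on $\R$ with largest slope $\mu'(0+)$, one has $\mu(X^{x_2}_t) - \mu(X^{x_1}_t) \leq \mu'(0+) D_t$, so the integrand is pointwise bounded above by $(\mu'(0+) - q)D_t \leq 0$. It follows that $I_F(x_2) - I_F(x_1) \leq D_0 = x_2 - x_1$, and dividing before passing to the limit produces $I_F'(x_1) \leq 1$.

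The main technical obstacle is rigorously justifying the two interchanges of limit and expectation, namely that $\E[e^{-qT} D_T] \to 0$ as $T \to \infty$ and that the It\^{o} integral on $[0,T]$ is a true martingale with zero expectation. Both are standard consequences of the Lipschitz continuity of $\sigma$, classical moment bounds for solutions of~\eqref{eq:X}, and crucially of the strict gap $q - \mu'(0+) > 0$ built into Assumptions~\ref{assumptions}; once these points are settled, the remainder is essentially algebraic manipulation of the semimartingale decomposition of $D$.
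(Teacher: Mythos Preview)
Your proposal is correct and follows essentially the same route as the paper. Both arguments use the comparison theorem for the lower bound and, for the upper bound, derive the representation
\[
I_F(x_2)-I_F(x_1)=(x_2-x_1)+\E\left[\int_0^\infty e^{-qt}\bigl(\mu(X^{x_2}_t)-\mu(X^{x_1}_t)-q D_t\bigr)\,\diff t\right],
\]
and then bound the integrand by zero using $\mu'\leq\mu'(0+)<q$. The only cosmetic differences are that the paper applies Dynkin's formula to $e^{-qt}X^x_t$ for each starting point separately (obtaining the identity~\eqref{IF-second-representation} before subtracting) and invokes the transversality Lemma~\ref{rem:transversal} for $\E_x[e^{-qT}X_T]\to 0$, whereas you work directly with the difference $D_t$ and handle transversality via a Gronwall bound on $\E[D_T]$; your route is slightly more self-contained in that it avoids the auxiliary construction of Appendix~\ref{App:Defined}.
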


\begin{proof}
First, the monotonicity of $I_F$ follows directly from the fact $F$ is increasing together with the comparison theorem for diffusions (see, e.g., \cite[Theorem 1.1, Chap. VI]{ikeda-watanabe_1989}). In other words, we have $I_F'(x) \geq 0$ for all $x \in \R$.

Next, fix $x \in \R$ and $T>0$. By an application of Dynkin's formula with the (deterministic) stopping time $T$, we can write
\[
\E\left[ e^{-qT} X^x_T \right] = x + \E\left[ \int_0^T e^{-qt} \left( \mu(X_t^x) - F(X_t^x) - q X_t^x \right) \diff t \right] ,
\]
in which the  notation $X^x$  is used (only) in this proof to emphasize that $X^x_0=x$. Taking the limit as $T \to \infty$, using Lemma~\ref{rem:transversal} and re-arranging terms, we get
\begin{equation}\label{IF-second-representation}
I_F(x) = x + \E\left[ \int_0^{\infty} e^{-qt} \left( \mu(X_t^x) - q X_t^x \right) \diff t \right], \quad x \in \R.
\end{equation}
For $h>0$, since $I_F$ is nondecreasing, we have $I_F(x+h) - I_F(x) \geq 0$. In addition, we have
\[
I_F(x+h) - I_F(x) = h + \E\left[ \int_0^{\infty} e^{-qt} \left( \mu(X_t^{x+h}) - \mu(X_t^x) - q( X_t^{x+h} - X_t^x) \right) \diff t \right] \leq h ,
\]
where we used that, for $y_1 \leq y_2$, we have $\mu(y_2) - \mu(y_1) \leq q (y_2-y_1)$ and $X_t^{x+h} \geq X_t^x$ for all $t\geq 0$. Thus, we have obtained
\[
\frac{I_F(x+h) - I_F(x)}{h} \leq 1
\]
and as $I_F'$ exists, the result follows by taking the limit as $h \to 0$.
\end{proof}

We now prove that the growth of $I_F$ is at most linear, with an explicit estimation which will be key to prove the concavity.

\begin{lemma}\label{lem_bound}
For $\xi \in \R$, define $\beta_\xi = \frac{F'(\xi)}{q-\mu'(\xi) + F'(\xi)}$ and $\alpha_\xi = \frac{\beta_\xi}{q} \left[-q \xi + \mu(\xi) + (q-\mu'(\xi))\frac{F(\xi)}{F'(\xi)} \right]$. We have
\begin{equation}\label{envelope_bound}
I_F(x) \leq \alpha_\xi + \beta_\xi x , \quad x \in \R .
\end{equation}
In addition, for $x\leq 0$, we have
\begin{equation}\label{envelope_neg_bound}
I_F(0) + \beta_0 x \leq I_F(x) \leq \alpha_0 + \beta_0 x.
\end{equation}
\end{lemma}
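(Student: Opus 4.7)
The plan is to prove the upper bound~\eqref{envelope_bound} via an Itô/Dynkin argument combined with tangent-line inequalities for the concave functions $\mu$ and $F$, and then to derive the lower bound in~\eqref{envelope_neg_bound} directly from the explicit representation of $I_F$ on $(-\infty, 0]$ given by Proposition~\ref{rk:IF-mu-F-affine}(2).

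First, I would observe that $\beta_\xi \in [0,1]$ for every $\xi \in \R$: under Assumptions~\ref{assumptions} combined with the linear extensions of Remark~\ref{remark:extensions}, concavity of $\mu$ together with $\mu^\prime(0+) < q$ gives $\mu^\prime(\xi) < q$ on all of $\R$, while $F^\prime(\xi) \geq 0$. The key algebraic identity is
\[
\mu^\prime(\xi)\beta_\xi + F^\prime(\xi)(1-\beta_\xi) = q \beta_\xi ,
\]
which follows directly from the definition of $\beta_\xi$. Combining this identity with the tangent-line inequalities $\mu(x) \leq \mu(\xi) + \mu^\prime(\xi)(x-\xi)$ and $F(x) \leq F(\xi) + F^\prime(\xi)(x-\xi)$, a short calculation shows
\[
\beta_\xi \mu(x) + (1-\beta_\xi) F(x) - q(\alpha_\xi + \beta_\xi x) \leq 0 , \quad x \in \R ,
\]
with equality at $x = \xi$ (the linear coefficients matching exactly via the identity above, and the constant terms matching by a direct computation using the formula for $\alpha_\xi$).

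Next, set $w_\xi(x) := \alpha_\xi + \beta_\xi x$. Applying Itô's formula to $e^{-qt} w_\xi(X_t)$ on $[0, T \wedge \tau_n]$, where $(\tau_n)_{n\geq 1}$ is a localizing sequence for the stochastic integral, the preceding inequality lets one discard a nonpositive drift term, and taking expectations gives
\[
w_\xi(x) \geq \E_x\SBRA{ \int_0^{T \wedge \tau_n} e^{-qt} F(X_t) \diff t } + \E_x\SBRA{ e^{-q(T \wedge \tau_n)} w_\xi(X_{T \wedge \tau_n}) } .
\]
The main technical obstacle is passing to the limit in this inequality. Standard linear-growth estimates on $X$ under Assumptions~\ref{assumptions} and the linear extensions of Remark~\ref{remark:extensions}, together with $\mu^\prime(0+)<q$ (which is exactly the mechanism behind the transversality statement invoked in the proof of Proposition~\ref{Prop:Increasing:Derivative}), allow us to remove the localization, send $T \to \infty$, identify the limit of the integral with $I_F(x)$ by dominated convergence, and show that the expectation of $e^{-q(T\wedge\tau_n)} w_\xi(X_{T\wedge\tau_n})$ vanishes. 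This yields~\eqref{envelope_bound}.

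Finally, for the lower bound in~\eqref{envelope_neg_bound}, I would rewrite the identity~\eqref{IF-under-zero} of Proposition~\ref{rk:IF-mu-F-affine}(2) as
\[
I_F(x) = I_F(0) + \beta_0 x + (\alpha_0 - I_F(0))(1 - \phiqFminus(x)) , \quad x \leq 0 .
\]
Applying~\eqref{envelope_bound} with $\xi=0$ at $x=0$ gives $I_F(0) \leq \alpha_0$, hence $\alpha_0 - I_F(0) \geq 0$. Since $\phiqFminus$ is the increasing solution of the relevant ODE with $\phiqFminus(0) = 1$ and $\lim_{x\to -\infty}\phiqFminus(x) = 0$, we have $1 - \phiqFminus(x) \geq 0$ for $x \leq 0$, and the lower bound in~\eqref{envelope_neg_bound} follows. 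The upper bound in~\eqref{envelope_neg_bound} is precisely the case $\xi=0$ of~\eqref{envelope_bound}.
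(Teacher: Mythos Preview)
Your proof is correct and follows essentially the same route as the paper: both combine the tangent-line inequalities for the concave functions $\mu$ and $F$ with the transversality property $\E_x[e^{-qT}X_T]\to 0$, and both derive the lower bound on $(-\infty,0]$ from~\eqref{IF-under-zero} together with $I_F(0)\leq\alpha_0$. The only cosmetic difference is that the paper packages the It\^o/Dynkin step via the already-established identity $I_F(x)=x+\E_x\bigl[\int_0^\infty e^{-qt}(\mu(X_t)-qX_t)\,\diff t\bigr]$ (from the proof of Proposition~\ref{Prop:Increasing:Derivative}), whereas you apply It\^o's formula directly to $e^{-qt}w_\xi(X_t)$.
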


\begin{proof}
First, note that, for $\xi < 0$, we have $\beta_\xi=\beta_0$ and $\alpha_\xi=\alpha_0$ (see definition of $\alpha_0,\beta_0$ in Proposition~\ref{rk:IF-mu-F-affine}).

Let $\xi\in\R$ and $x\in \R$. For simplicity, in this proof, we write $\beta$ instead of $\beta_\xi$. Using~\eqref{IF-second-representation}, we can write
\begin{equation}\label{envelope_1}
\beta x = I_F(x) + \E\left[ \int_0^{\infty} e^{-qt} \left( \beta \left[ q X_t^x - \mu(X_t^x)  + \left(\frac{\beta - 1}{\beta}\right) F(X_t^x)\right] \right) \diff t \right] . 	
\end{equation}

First, note that $\beta>0$ and $ \frac{\beta - 1}{\beta} = \frac{\mu'(\xi) - q}{F'(\xi)} \leq 0$. Second, since $F$ is concave on $\R$, we have $F(X_t^x) \leq F(\xi) + F'(\xi)( X_t^x - \xi )$. Consequently,

\begin{align}
\beta x - I_F(x) &\geq \E\left[ \int_0^{\infty} e^{-qt} \left( \beta \left[ q X_t^x - \mu(X_t^x)   - (q - \mu'(\xi))\left(\frac{F(\xi)}{F'(\xi)} + X_t^x - \xi\right) \right] \right) \diff t \right] \notag\\
		&=\E\left[ \int_0^{\infty} e^{-qt} \left( \beta \left[  - \mu(X_t^x)  - (q - \mu'(\xi))\frac{F(\xi)}{F'(\xi)} + q  \xi + \mu'(\xi)(X_t^x - \xi) \right] \right) \diff t \right] \notag\\
		&\geq \int_0^{\infty} e^{-qt} \left( \beta \left[  q  \xi - \mu(\xi)  - (q - \mu'(\xi))\frac{F(\xi)}{F'(\xi)}  \right] \right) \diff t \label{envelope_2}
\end{align}
where, in the last inequality, we used the fact that $\mu(X_t^x) \leq \mu(\xi) + \mu'(\xi)(X_t^x - \xi)$, since $\mu$ is a concave function on $\R$. It follows immediately from \eqref{envelope_2} that $\beta x-I_F(x)\geq -\alpha_\xi$.

Finally, for the proof of \eqref{envelope_neg_bound}, recall from Equation~\eqref{IF-under-zero} that, for $x \leq 0$,
\[
I_F(x) = \alpha_0 + \beta_0 x + \left( I_F(0)-\alpha_0 \right) \phiqFminus(x) = I_F(0) + \beta_0 x + \left( \alpha_0-I_F(0) \right) \left(1-\phiqFminus(x) \right) .
\]
In addition, it follows from \eqref{envelope_bound} by taking $\xi = 0$ that $I_F(0) \leq \alpha_0$. We deduce easily that, for $x \leq 0$,
\[
I_F(0) + \beta_0 x \leq I_F(x) \leq \alpha_0 + \beta_0 x .
\]
\end{proof}

\begin{remark}\label{rk:monotony-beta}
Under Assumptions~\ref{assumptions}, we have that $\xi \mapsto \beta_{\xi}$ is non-increasing.
\end{remark}

Let us now investigate the concavity of $I_F$. First, note that concavity on $(-\infty,0)$ easily follows from Equation~\eqref{IF-under-zero}. Indeed, for $x \leq 0$,
\[
I_F(x) =  I_F(0) + \beta_0 x + \left( \alpha_0-I_F(0) \right) \left(1-\phiqFminus(x) \right) .
\]
As $\phiqFminus$ is convex (see Lemma~\ref{lemma:concave-convex}) and $\alpha_0 \geq I_F(0)$, it follows easily that $I_F$ is concave on $(-\infty,0)$. Furthermore, as $I_F \in \mathcal{C}^2$, we can deduce that $I''_F(0)\leq 0$.

\begin{proposition}\label{Prop:IF:Concave}
The function $I_F$ is concave on $\R$.
\end{proposition}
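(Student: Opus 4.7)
The plan is to combine the known concavity of $I_F$ on $(-\infty,0]$ (established just before the statement, together with $I_F''(0)\leq 0$) with a concave-envelope-plus-maximum-principle argument on $[0,\infty)$. Arguing by contradiction, I would suppose that $I_F$ is not concave on $\R$, and let $\hat{I}_F$ denote its concave envelope, namely the smallest concave function on $\R$ dominating $I_F$. This envelope is well-defined and continuous thanks to the linear upper bounds from Lemma~\ref{lem_bound}. The open non-contact set $N := \{x\in\R : \hat{I}_F(x) > I_F(x)\}$ is a disjoint union of open intervals, on each of which $\hat{I}_F$ is affine; the concavity of $I_F$ on $(-\infty,0]$ forces $N \subseteq (0,\infty)$.

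The first and most delicate task is to produce a \emph{bounded} connected component $(a,b) \subset N$ with contact at both endpoints. The existence of such a bounded component should be extracted from the tangent-line estimates $I_F(x) \leq \alpha_\xi + \beta_\xi x$ of Lemma~\ref{lem_bound} together with the monotonicity of $\xi \mapsto \beta_\xi$ from Remark~\ref{rk:monotony-beta}, which together control the asymptotic slope of $I_F$ at $+\infty$ and preclude non-contact intervals extending to $+\infty$. On such a bounded interval write $\hat{I}_F(x) = \alpha + \beta x$. Since $I_F \in \mathcal{C}^2(\R)$ by Proposition~\ref{prop:I_F of class C^2}, standard $\mathcal{C}^1$-matching of the concave envelope at contact points of a $\mathcal{C}^1$ function yields $\beta = I_F'(a) = I_F'(b)$; in particular, $\beta \in [0,1]$ by Proposition~\ref{Prop:Increasing:Derivative}.

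The core of the argument is then to show that $u := \hat{I}_F - I_F$ satisfies $(\Gen_F - q)u \geq 0$ on $(a,b)$. Using the ODE $(\Gen_F - q) I_F = -F$ from Proposition~\ref{prop:I_F of class C^2}, a direct computation gives
\[
(\Gen_F - q) u(x) \;=\; \beta\, \mu(x) + (1-\beta) F(x) - q\alpha - q\beta x \;=:\; g(x), \qquad x\in[a,b].
\]
Because $\beta \in [0,1]$ and both $\mu$ and $F$ are concave on $\R$ under Assumptions~\ref{assumptions}, the function $g$ is concave on $[a,b]$. At the endpoint $a$, the second-order minimum condition for $u \geq 0$ with $u(a) = 0$, combined with the $\mathcal{C}^1$-matching $u'(a+) = 0$ and $\hat{I}_F''(a+) = 0$ (affine right branch), gives $-I_F''(a) = u''(a+) \geq 0$; substituting $I_F'(a) = \beta$ and $I_F(a) = \alpha + \beta a$ into the ODE then produces the key identity
\[
g(a) \;=\; -\tfrac{\sigma^2(a)}{2}\, I_F''(a) \;\geq\; 0 .
\]
The same reasoning yields $g(b) \geq 0$, and by concavity of $g$ we conclude $g \geq 0$ on all of $[a,b]$.

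To conclude, I would apply the maximum principle for the operator $\Gen_F - q$ (with $q>0$) on the bounded interval $(a,b)$: were $u$ to attain a positive interior maximum at some $x^* \in (a,b)$, then $u'(x^*) = 0$ and $u''(x^*) \leq 0$ would force $(\Gen_F - q)u(x^*) \leq -q\, u(x^*) < 0$, contradicting $g(x^*) \geq 0$. Combined with $u(a)=u(b)=0$, this gives $u \leq 0$ on $[a,b]$, contradicting the choice $u > 0$ on $(a,b)$. Hence $I_F$ must be concave on $\R$. The main obstacle, to my mind, is the preliminary step of excluding unbounded non-contact components of $N$ by means of the tangent-line bounds; once a bounded component $(a,b)$ is in hand, the remainder of the argument is a clean combination of the concavity of $\beta\mu + (1-\beta)F$ and the maximum principle.
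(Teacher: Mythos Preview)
Your overall architecture---concave envelope, show $(\Gen_F-q)(\hat I_F-I_F)\geq 0$ on a non-contact interval via the concavity of $\beta\mu+(1-\beta)F$, then derive a contradiction---is exactly the paper's strategy, and your treatment of a \emph{bounded} component $(a,b)$ is correct and matches the paper's case~(1) almost line for line. Your endpoint computation $g(a)=-\tfrac{\sigma^2(a)}{2}I_F''(a)\geq 0$ is the paper's inequality~\eqref{ineq1}, and your classical maximum-principle conclusion is equivalent to (and cleaner than) the paper's probabilistic argument via the strong Markov property and It\^o's formula.

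The gap is precisely where you flag it, and it is more serious than you suggest. First, the assertion $N\subseteq(0,\infty)$ does not follow from concavity of $I_F$ on $(-\infty,0]$: the \emph{global} concave envelope can lie strictly above $I_F$ on part of $(-\infty,0]$ even if $I_F$ is concave there. The paper explicitly allows $\hat x=-\infty$ (its case~(3)) and uses the two-sided bound $I_F(0)+\beta_0 x\leq I_F(x)\leq \alpha_0+\beta_0 x$ on $(-\infty,0]$ to pin down the envelope slope there. Second, and more importantly, the paper does \emph{not} reduce to a bounded component; it treats the four cases (bounded, $\hat y=+\infty$, $\hat x=-\infty$, and both infinite) separately. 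In the half-infinite case $\hat y=+\infty$, the tangent-line bounds from Lemma~\ref{lem_bound} are used not to produce a right contact point but to bound the envelope slope by $\beta_\xi$ for every $\xi$, which yields
\[
\frac{I_F'(\hat x)-1}{I_F'(\hat x)}\;\leq\;\frac{\mu'(\xi)-q}{F'(\xi)}\qquad\text{for all }\xi,
\]
and choosing $\xi=z$ pointwise is what makes the inequality $(q-\Gen_F)U(z)\leq F(z)$ go through on $(\hat x,\infty)$. Your sketch does not contain this step, and it is not clear that an unbounded non-contact component can be excluded at all (indeed the paper shows the doubly-infinite case~(4) is ruled out only when $\mu$ or $F$ fails to be affine, the affine case being handled separately). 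Finally, note that your interior-maximum argument does not apply as stated on an unbounded interval; the paper closes the contradiction in those cases by combining It\^o's formula with the transversality property of Lemma~\ref{rem:transversal}, which plays the role of a growth condition at infinity.
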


\begin{proof}
By Proposition~\ref{rk:IF-mu-F-affine}, if $\mu$ and $F$ are affine, then $I_F$ is affine and therefore concave. We now assume that $\mu$ nor $F$ is affine.

Let $U$ be the concave envelope of $I_F$ on $\R$ given by
\[
U(z) = \inf \BRA{ \Phi(z) : \Phi \text{ is concave and } \Phi \geq I_F }, \quad z \in \R.
\]
It is itself a concave function. We refer the reader to Appendix \ref{App:Envelope} for more details on concave envelopes.

To obtain the concavity of $I_F$, we prove that $I_F=U$, and indeed it suffices to prove that $I_F \geq U$. Let us proceed by contradiction. Therefore, suppose there exists $x_0 >0$ such that $U(x_0) > I_F(x_0)$. As $I_F \in \mathcal{C}^2(\R)$ (see Proposition~\ref{prop:I_F of class C^2}), there exist $-\infty \leq \hat x<\hat y \leq \infty$ such that $x_0 \in (\hat x, \hat y)$, $U(z) > I_F(z)$ for all $z \in (\hat x,\hat y)$, and $U(\hat x) = I_F(x_0)$ if $\hat x > -\infty$ (resp. $U(\hat y) = I_F(x_0)$ if $\hat y < \infty$). We will study separately the following cases: 
\begin{enumerate}
    \item \label{case1} $-\infty < \hat x < \hat y < +\infty$;
    \item \label{case2} $\hat x > -\infty$ and $\hat y = +\infty$;
    \item \label{case3} $\hat x = -\infty$ and $\hat y < +\infty$;
    \item \label{case4} $\hat x = -\infty$ and $\hat y =  +\infty$.
\end{enumerate}

First, we will prove, in cases \eqref{case1}--\eqref{case3}, that
\[
    ( q - \Gen_F )U(z) \leq F(z), \quad \text{for }z\in (\hat x, \hat y).
\]

For case~\eqref{case1},  from Lemma~\ref{lem:affine} and Lemma~\ref{lemma:interval}, we have that $U$ is affine on $(\hat x, \hat y)$ and is such that
\[
U(\hat x) = I_F(\hat x), \; U(\hat y) = I_F(\hat y), \; U'=I_F'(\hat x) = I_F'(\hat y), \; I_F''(\hat x) \leq 0, \; I_F''(\hat y) \leq 0.
\]
Thus, using also that $(q - \Gen_F)I_F = F$ on $\R$ from Proposition~\ref{prop:I_F of class C^2}, we obtain
 \begin{equation}\label{ineq1}
 \begin{split}
     (q - \Gen_F)U(\hat{x}) - F(\hat{x}) &= q I_F(\hat{x}) - (\mu(\hat{x}) - F(\hat{x})) I_F'(\hat{x}) - F(\hat{x})\\
 &= (q - \Gen_F)I_F(\hat x) - F(\hat{x}) + \frac{\sigma^2(\hat{x})}{2} I_F''(\hat{x})\\
     &= \frac{\sigma^2(\hat{x})}{2} I_F''(\hat{x})\leq 0 .
 \end{split}
 \end{equation}
Similarly, we have $(q - \Gen_F)U(\hat{y}) - F(\hat{y}) \leq 0$.

For $z\in (\hat{x},\hat{y})$, we have $z = \lambda \hat{x} + (1-\lambda)\hat{y}$ for some $\lambda \in (0,1)$. Using the concavity of both $\mu$ and $F$ and using Proposition~\ref{Prop:Increasing:Derivative}, we have
\begin{align*}
	(q - \Gen_F)U(z) - F(z) &= q( \lambda U(\hat{x}) + (1-\lambda)U(\hat{y}) ) - I_F'(\hat{x})\mu(z) + F(z)(I_F'(\hat{x}) - 1)\\
	&\leq q( \lambda U(\hat{x}) + (1-\lambda)U(\hat{y}) ) - I_F'(\hat{x}) (\lambda \mu(\hat{x}) + (1-\lambda)\mu(\hat{y}))\\
	&\qquad  + (I_F'(\hat{x}) - 1) (\lambda F(\hat{x}) + (1-\lambda)F(\hat{y}))\\
	& \leq \lambda ((q - \Gen_F)U(\hat{x}) - F(\hat{x})) + (1-\lambda)((q - \Gen_F)U(\hat{y}) - F(\hat{y}))\\
	&\leq 0,
\end{align*}
where the last inequality is a direct consequence of inequality~\eqref{ineq1} and its equivalent with $\hat x$ replaced by $\hat y$. It follows that 
\begin{equation}\label{Concave_1}
(q - \Gen_F)U(z) \leq F(z), \quad z \in [\hat{x},\hat{y}].
\end{equation}

Now, for case~\eqref{case2},  from Lemma~\ref{lem:affine} and Lemma~\ref{lemma:interval}, we know that $U$ is affine on $(\hat{x},\infty)$ and is such that
\begin{equation}\label{envelope_infty}
U(\hat x) = I_F(\hat x), \; U'(\hat x) = I_F'(\hat x), \; I_F''(\hat x) \leq 0 .
\end{equation}
In particular, we have $U(z)=I_F(\hat{x})+I_F^\prime(\hat{x})(z-\hat{x})$ for $z \in (\hat{x},\infty)$. Then, for $z\in (\hat x,\infty)$, we have
\begin{align*}
    ( q - \Gen_F )U(z) - F(z) &= q I_F(\hat{x}) + I_F'(\hat{x}) ( q (z - \hat{x})  - \mu(z) + F(z)) - F(z)\\
    &= I_F'(\hat{x})( \mu(\hat{x}) - F(\hat{x} )) + \frac{\sigma^2(\hat{x})}{2}I_F''(\hat{x}) + F(\hat{x})\\
    & \qquad + I_F'(\hat{x}) ( q (z - \hat{x})  - \mu(z) + F(z)) - F(z) ,
\end{align*}
where in the second equality we used that $ ( q - \Gen_F )I_F(\hat{x}) = F(\hat{x}) $ in order to substitute for $q I_F(\hat{x})$. After re-arranging terms, we get
\begin{equation}\label{envelope_4}
( q - \Gen_F )U(z) - F(z) = \frac{\sigma^2(\hat{x})}{2}I_F''(\hat{x}) + I_F'(\hat{x}) \left( q (z-\hat{x}) - (\mu(z) - \mu(\hat{x})) + \frac{I_F'(\hat{x})-1}{I_F'(\hat{x})}( F(z) - F(\hat{x}) ) \right).
\end{equation}
Now, let us obtain a suitable bound for the term $\frac{I_F'(\hat{x})-1}{I_F'(\hat{x})}$. To this end, we notice by Lemma~\ref{lem_bound} and by the definition of the concave envelope that, for all $\xi\in\R$,
\[
U(x)\leq  \alpha_\xi+\beta_\xi x,\quad x\in\R.
\]
Since $U$ is affine with slope $I_F'(\hat x)$ on $(\hat{x},\infty)$, we deduce $I_F'(\hat x)\leq \beta_\xi$ for any $\xi\in\R$. 

Using the definition of $\beta_\xi$ and $-q+\mu'(\xi)<0$, we have that
\begin{equation}\label{slope_bound}
\frac{I_F'(\hat x) -1 }{I_F'(\hat x)} \leq \frac{-q + \mu'(\xi)}{F'(\xi)}.
\end{equation}
Equipped with this upper bound, we return to the analysis of \eqref{envelope_4}. Applying the inequality \eqref{slope_bound} with $\xi = z$ in \eqref{envelope_4} yields 
\begin{align*}
( q - \Gen_F )U(z) - F(z) &\leq \frac{\sigma^2(\hat{x})}{2}I_F''(\hat{x}) + I_F'(\hat x) (z - \hat{x}) \left( q - \frac{\mu(z) - \mu(\hat{x})}{z-\hat{x}} -\left( \frac{q - \mu'(z)}{ F'(x) } \right) \frac{F(z) - F(\hat{x})}{z-\hat{x}} \right)\notag \\
    & \leq \frac{\sigma^2(\hat{x})}{2}I_F''(\hat{x}) + I_F'(\hat x) (z - \hat{x}) \left( q - \mu'(z) - \left( \frac{q - \mu'(z)}{ F'(z) } \right) F'(z) \right)\\
    &= \frac{\sigma^2(\hat{x})}{2}I_F''(\hat{x}) \leq 0,
\end{align*}
where we used that $I_F$ and $F$ are increasing, that $\mu$ and $F$ are concave, and \eqref{envelope_infty}. This allows us to conclude that 
\begin{equation}\label{envelope_6}
    ( q - \Gen_F )U(z) \leq F(z), \quad z\in (\hat x, +\infty).
\end{equation}
In case~\eqref{case3},  from Lemma~\ref{lem:affine} and Lemma~\ref{lemma:interval}, we have that 
$U(z)=I_F(\hat{y})+I_F^\prime(\hat{y})(z-\hat{y})$ for $z \in (-\infty,\hat{y})$, with
\begin{equation}\label{envelope_infty2}
U(\hat y) = I_F(\hat y), \; U'(\hat y) = I_F'(\hat y), \; I_F''(\hat y) \leq 0 .
\end{equation}
Using the bounds \eqref{envelope_neg_bound} of $I_F$ and the definition of the concave envelope, we deduce that
\[
I_F(0)+\beta_0 z\leq U(z)\leq \alpha_0+\beta_0 z,\quad z\leq \min(0,\hat y).
\]
This implies that $I_F'(\hat y)=\beta_0$. Furthermore, by monotonicity of $\xi\mapsto\beta_\xi$ (see Remark~\ref{rk:monotony-beta}), for any $\xi\in\R$, $\frac{I_F'(\hat y)-1}{I_F'(\hat y)}=\frac{-q+\mu'(0+)}{F'(0+)}\geq \frac{-q+\mu'(\xi)}{F'(\xi)}$. Similarly to case~\eqref{case2}, we deduce
\begin{equation}\label{envelope_8}
( q - \Gen_F )U(z) \leq F(z), \quad z\in (-\infty, \hat y).
\end{equation}

At last in case~\eqref{case4}, by Lemma~\ref{lem:affine} we know that $U$ is affine on $\R$: $U(z)=U(0)+U'(0)z$. Similarly to case~\eqref{case2}, we deduce that $U'(0)\leq \beta_\xi$ for any $\xi\in\R$, and similarly to case~\eqref{case3}, we obtain $U'(0)=\beta_0$. From monotonicity of $\xi\mapsto \beta_\xi$ in Remark~\ref{rk:monotony-beta}, we deduce that case~\eqref{case4} can only happen when $\beta_\xi$ is constant. Under Assumptions~\ref{assumptions} this is only true when $F'$ and $\mu'$ are constant, which is impossible by hypothesis.

We are now ready to obtain a contradiction in each case \eqref{case1}--\eqref{case3}. 
Define the exit time $\widehat T = \inf\lbrace t \geq 0 \colon X_t \notin (\hat{x},\hat{y}) \rbrace$. Note that, 
on $\BRA{\widehat{T}<+\infty}$ we have $X_{\widehat{T}} \in \{\hat{x},\hat{y}\}$ in case~\eqref{case1}; $X_{\widehat{T}} = \hat{x}$ in case~\eqref{case2}; and $X_{\widehat{T}} = \hat{y}$ in case~\eqref{case3}. 
As $x_0 \in (\hat{x},\hat{y})$, by the strong Markov property, we have
\begin{align}
	I_F(x_0) &= \E_{x_0} \left[ \int_0^{\widehat T} e^{-qt} F(X_t) \diff t + e^{-q \widehat{T}} I_F(X_{\widehat T});\widehat{T}<+\infty \right]+\E_{x_0} \left[ \int_0^{+\infty} e^{-qt} F(X_t) \diff t ;\widehat{T}=+\infty \right] \\
	&\geq \E_{x_0} \left[ \int_0^{\widehat T} e^{-qt} (q-\Gen_F)U(X_t) \diff t + e^{-q \widehat{T}} I_F(X_{\widehat T})  \Ind_{\widehat{T}<+\infty}\right] \\
	&= \E_{x_0} \left[ \int_0^{\widehat T} e^{-qt} (q-\Gen_F)U(X_t) \diff t + e^{-q \widehat{T}} U(X_{\widehat T}) \Ind_{\widehat{T}<+\infty}\right] \label{envelope_ito}.
\end{align}
Note that in \eqref{envelope_ito} we used the fact that, on $\BRA{\widehat{T}<\infty}$,  $I_F(\hat{x})=U(\hat{x})$ and/or $I_F(\hat{y})=U(\hat{y})$. 

In order to conclude, we analyse the term on the right side of \eqref{envelope_ito}. Let 
$\tau_n := \widehat{T}\wedge n$. Then, using It\^{o}'s Formula we get
\begin{align*}
    e^{-q \tau_n}U(X_{\tau_n}) - U(x_0) &= \int_0^{\tau_n} e^{-qt} \PAR{ \Gen_F - q } U(X_t) \diff t + \int_0^{\tau_n} \sigma(X_t) U'(X_t) \diff W_t.
\end{align*}
By taking expectations on both sides, we notice that the stochastic integral term vanishes, 
hence we obtain
\begin{equation}\label{envelope_ito_3}
\E_{x_0} \SBRA{ e^{-q \tau_n}U(X_{\tau_n}) } - U(x_0) = \E_{x_0}\SBRA{ \int_0^{\tau_n} e^{-qt} \PAR{ \Gen_F - q } U(X_t) \diff t }.
\end{equation}
Observe that we can expand the term $\E_{x_0} \SBRA{ e^{-q \tau_n}U(X_{\tau_n}) }$ as
\[
\E_{x_0} \SBRA{ e^{-q \tau_n}U(X_{\tau_n}) } = \E_{x_0} \SBRA{ e^{-q \tau_n}U(X_{\tau_n}) \Ind_{\lbrace \widehat{T} < +\infty \rbrace} } + \E_{x_0} \SBRA{ e^{-q n}U(X_{\tau_n}) \Ind_{\lbrace \widehat{T} = +\infty \rbrace} },
\]
where it follows that $ \E_{x_0} \SBRA{ e^{-q n}U(X_{\tau_n}) \Ind_{\lbrace \widehat{T} = +\infty \rbrace} } \to 0 $ as $n\to \infty$ due to the fact that $U$ is affine and Lemma~\ref{rem:transversal}, and $ \E_{x_0} \SBRA{ e^{-q \tau_n}U(X_{\tau_n}) \Ind_{\lbrace \widehat{T} < +\infty \rbrace} } \to \E_{x_0} \SBRA{ e^{-q \widehat{T}}U(X_{\widehat{T}}) \Ind_{\lbrace \widehat{T} < +\infty \rbrace} } $ as $n\to \infty$. Hence, by taking the limit in \eqref{envelope_ito_3} as $n\to \infty$ we get
\[
\E_{x_0} \SBRA{ e^{-q \widehat{T}}U(X_{\widehat{T}}) \Ind_{\lbrace \widehat{T} < +\infty \rbrace} } - U(x_0) = \E_{x_0}\SBRA{ \int_0^{\widehat{T}} e^{-qt} \PAR{ \Gen_F - q } U(X_t) \diff t },
\]
and, by re-arranging terms, we obtain 
\[
\E_{x_0} \left[ \int_0^{\widehat T} e^{-qt} (q-\Gen_F)U(X_t) \diff t + e^{-q \widehat{T}} U(X_{\widehat T}) \Ind_{\widehat{T}<+\infty}\right] = U(x_0),
\]
where it follows easily that $I_F(x_0) \geq U(x_0)$.

In conclusion, we have obtained a contradiction to the statement: there exists $x_0 >0$ such that $U(x_0) > I_F(x_0)$. Thus $I_F=U$ and $I_F$ is a concave function.
 \end{proof}

\section{A Candidate optimal strategy}\label{Sect:refraction}

In this section, we study the family of refraction strategies which are defined by Equations~\eqref{eq:l^b} and~\eqref{eq:X^b}. Despite the fact that we consider a general diffusion model, the rest of this section follows closely the general methodology already used in \cite{locas-renaud_2024} for a Brownian motion with drift.\\

We provide an intuitive justification as to why we are interested in proving the optimality of refraction strategies. To this end, we observe that the Hamilton-Jacobi-Bellman (HJB) equation associated with our control problem is given by
\begin{equation}\label{HJB}
    \begin{cases}
        ( \Gen - q ) u(x) + \sup_{\mathrm{c} \in [0,F(x)]} \mathrm{c} (1 - u'(x)) = 0, \quad x \in (0,\infty),\\
        u(0) = 0.
    \end{cases}
\end{equation}
From the HJB equation, one can deduce that, if $u$ is a classical solution of \eqref{HJB}, then we have
\begin{equation}
    \begin{cases}
        ( \Gen - q ) u(x) = 0, & \text{if } u'(x) \geq 1,\\
        ( \Gen_F - q ) u(x) = -F(x), & \text{if } u'(x) < 1.
    \end{cases}
\end{equation}
These variational inequalities have the following interpretation:
\begin{itemize}
    \item If $u'(x) \geq 1$, then the supremum in \eqref{HJB} is attained for $\mathrm{c} = 0$ and no withdrawals are made;
    \item If $u'(x) < 1$, then the supremum in \eqref{HJB} is attained for $\mathrm{c} = F(x)$ and withdrawals are made at the maximal possible rate.
\end{itemize}
It follows that our goal is to find a classical solution \eqref{HJB} and characterise the regions 
\begin{equation}\label{regions}
\mathcal{D}_0 := \BRA{x \in [0,\infty): u'(x) \geq 1 } \text{ and } \mathcal{D}_F := \BRA{x \in [0,\infty): u'(x) < 1}.
\end{equation}
In the context of refraction strategies, given a threshold $b\geq 0$, we notice that the region where no withdrawals are made is the interval $[0,b]$; whereas the region where withdrawals are made at the maximal possible rate is the interval $(b,\infty)$. Hence, we would like to prove the existence of a threshold $b^\ast$ such that its associated performance function is a classical solution of \eqref{HJB} and satisfies $\mathcal{D}_0 = [0,b^\ast]$ and $\mathcal{D}_F = (b^\ast,\infty)$.

Recall the performance function of a refraction strategy at level $b\geq 0$, as defined in \eqref{Barrier:PV}:
\[
J_b(x) = \E_x\SBRA{\int_0^{\tau^b_0} e^{-qt} F(X^b_t) \Ind_{\lbrace X^b_t \geq b\rbrace} \diff t } , \quad x \in \R.
\] 
First, we will derive an expression for $J_b$ using the exit identities \eqref{Y:exit:time} and \eqref{X:exit:time}. In order to do so, let us fix $b>0$. Define $\kappa_b = \inf\lbrace t \geq 0 \colon Y_t \geq b \rbrace$ and $\kappa_0 = \inf\lbrace t \geq 0 \colon Y_t \leq 0 \rbrace$. Then, for $x \in [0,b]$, using the fact that $X^b$ and $Y$ are equal until they exit the interval $(0,b)$ and using the strong Markov property of $Y$, we can write
	\begin{equation}\label{J_b_1}
	J_b(x) = \E_x\left[ e^{-q \kappa_b} ; \kappa_b < \kappa_0 \right] J_b(b) =\frac{\psiq(x)}{\psiq(b)} J_b(b) .
	\end{equation}

Similarly, for $x \in (b,\infty)$, using repeatedly the strong Markov property, we can write
\begin{align}
J_b(x) &= \E_x \left[ \int_0^{{\tau_b}} e^{-qt} F({X}_t) \diff t \right] + \E_x\left[ e^{-q {\tau_b} } \right] J_b(b)\notag\\
&=  \E_x \left[ \int_0^{\infty} e^{-qt} F({X}_t) \diff t \right] - \E_x \left[ \int_{{\tau}_b}^{\infty} e^{-qt} F({X}_t) \diff t \right] + \E_x\left[ e^{-q {\tau}_b } \right] J_b(b)\notag\\
&= \E_x \left[ \int_0^{\infty} e^{-qt} F({X}_t) \diff t \right] - \E_x\left[ e^{-q {\tau}_b } \right] \E_b \left[ \int_0^{\infty} e^{-qt} F({X}_t) \diff t \right] + \E_x\left[ e^{-q {\tau}_b } \right] J_b(b)\notag\\
&= I_F(x) + \frac{\phiqF(x)}{\phiqF(b)} ( J_b(b) - I_F(b) ) .\label{J_b_2}
\end{align}

Using the same approximation procedure as in \cite{locas-renaud_2024}, one can prove that
\[
J_b(b) = \psiq(b) \left(\frac{I_F'(b)\phiqF(b) - \phiqFprime(b)I_F(b)}{\psiq^\prime(b) \phiqF(b) - \phiqFprime(b) \psiq(b)} \right).
\]
The details are left to the reader.

Substituting this value of $J_b(b)$ back into the expressions obtained above, we get
\begin{equation}\label{eq:J>b-2}
J_b(x) =
\begin{cases}
\psiq(x) \left( \frac{ I_F'(b) \phiqF(b) - I_F(b) \phiqFprime(b)}{\psiq^\prime(b)\phiqF(b) - \psiq(b) \phiqFprime(b)} \right) & \text{for $0 \leq x < b$,}\\
I_F(x) + \phiqF(x) \left( \frac{I_F'(b) \psiq(b) - I_F(b) \psiq^\prime(b)}{\psiq^\prime(b) \phiqF(b) - \psiq(b) \phiqFprime(b)} \right) & \text{for $x \geq b$.}
\end{cases}
\end{equation}

It is interesting to note that, for $b>0$, since we have $J'_b(b+) = J'_b(b-)$, then $J_b \in \mathcal{C}^1(0,\infty)$.

On the other hand, for $b=0$, recall from the proof of Proposition~\ref{prop:I_F of class C^2} that $J_0$ is a classical solution of 
\[
\begin{cases}
(\Gen_F - q)u(x) = -F(x) , \quad x \in (0,\infty), \\
u(0) = 0.
\end{cases}
\]

The next step in our approach consists in finding the \textit{best} refraction strategy. In other words, we want to characterize the optimal refraction level. Mathematically speaking, we want to find a value $b^\ast \geq 0$ such that $J_{b^\ast} \geq J_b$, for any $b\geq 0$. In what follows, we will distinguish between the cases $b^\ast=0$ and $b^\ast > 0$.

For reasons that will soon be clear, if $I_F^\prime(0) - I_F(0) \phiqFprime(0+) \leq 1$, then we set $b^\ast=0$. If $I_F^\prime(0) - I_F(0) \phiqFprime(0+) > 1$, then our goal is to find a value $b^\ast>0$ such that $\mathcal{D}_0 = [0,b^\ast]$ and $\mathcal{D}_F = (b^\ast,\infty)$. From the discussion at the beginning of this section, we expect that $b^\ast$ will be such that $J'_{b^\ast}(b^\ast)=1$, which after some algebraic manipulations amounts to finding the root of
\begin{equation}\label{b_optimal}
\frac{\varphi_{q,F}(b)}{\varphi_{q,F}^\prime(b)} - \frac{\psi_q(b)}{\psi_{q}^\prime(b)} = I_F'(b)\frac{\varphi_{q,F}(b)}{\varphi_{q,F}^\prime(b)} - I_F(b). 
\end{equation}
Of course, we need to prove that a unique solution to this last equation exists. This is provided by the next proposition. It is in its proof that the results of Section~\ref{Sec:I_F:Properties}, especially Proposition~\ref{Prop:Increasing:Derivative} and Proposition~\ref{Prop:IF:Concave}, are needed.

\begin{proposition}\label{prop_exist_b}
If $I_F^\prime(0) - I_F(0) \phiqFprime(0+) > 1$, then there exists a solution to~\eqref{b_optimal} in the interval $(0,\hat{b}]$.
\end{proposition}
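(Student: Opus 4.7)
My strategy is to rephrase \eqref{b_optimal} as the vanishing of a single function $M$, show $M(0)>0$ under the hypothesis, derive a first-order linear ODE for $M$ whose driving term changes sign at $\hat b$, and conclude by a contradiction argument combined with the intermediate value theorem.

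Set $N(b):=I_F'(b)\phiqF(b)-I_F(b)\phiqFprime(b)$ and $W(b):=\psi_q'(b)\phiqF(b)-\psi_q(b)\phiqFprime(b)$; formula~\eqref{eq:J>b-2} at $x=b$ reads $J_b(b)=\psi_q(b)N(b)/W(b)$, and a direct manipulation shows that~\eqref{b_optimal} is equivalent to $J_b(b)=\psi_q(b)/\psi_q'(b)$, i.e.\ to the vanishing of
\[
M(b):=\psi_q'(b)N(b)-W(b).
\]
Using $\psi_q(0)=0$, $\psi_q'(0)=1$ and $\phiqF(0)=1$, one computes $M(0)=I_F'(0+)-I_F(0)\phiqFprime(0+)-1$, which is strictly positive by the standing hypothesis of the proposition.

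The central technical step is to derive a linear ODE for $M$. Invoking $(\Gen-q)\psi_q=0$, $(\Gen_F-q)\phiqF=0$ and $(\Gen_F-q)I_F=-F$ from Proposition~\ref{prop:I_F of class C^2}, a tedious but routine computation yields
\[
M'(b)=\frac{2\bigl(F(b)-\mu(b)\bigr)}{\sigma^2(b)}\,M(b)+\psi_q''(b)\,N(b),
\]
and with the integrating factor $\eta(b):=\exp\!\Bigl(\int_0^b 2(\mu(y)-F(y))/\sigma^2(y)\,\diff y\Bigr)>0$ this becomes $(\eta M)'(b)=\eta(b)\psi_q''(b)N(b)$. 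Since $I_F\geq 0$, $I_F'\in[0,1]$ by Proposition~\ref{Prop:Increasing:Derivative}, $\phiqF>0$ and $\phiqFprime<0$, the function $N$ is a sum of two non-negative products, so $N\geq 0$ on $\R$; Lemma~\ref{lemma:concave-convex} gives $\psi_q''<0$ on $(0,\hat b)$ and $\psi_q''>0$ on $(\hat b,\infty)$. Consequently $\eta M$ is strictly decreasing on $[0,\hat b]$ and strictly increasing on $[\hat b,\infty)$, with its global minimum attained at $b=\hat b$.

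The main obstacle is to rule out $M(\hat b)>0$; I plan to do this by contradiction. Assume $M>0$ on $[0,\infty)$: repeating the same ODE manipulation on both branches of~\eqref{eq:J>b-2} shows that $\partial_b J_b(x)$ is a non-negative scalar multiple of $M(b)$ for every $x>0$ and every $b\neq x$, so $b\mapsto J_b(x)$ would be non-decreasing on $[0,\infty)$. For fixed $x>0$ and $b>x$, however, $J_b(x)=[\psi_q(x)/\psi_q(b)]J_b(b)$, and combining the at-most-linear growth of $J_b(b)$ (from $X^b\leq Y$ pathwise and the linear growth of the value function under Assumptions~\ref{assumptions}) with the exponential growth of $\psi_q$ yields $J_b(x)\to 0$ as $b\to\infty$. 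On the other hand, the hypothesis forces $I_F(0)>0$ and hence $F\not\equiv 0$, which together with the non-degeneracy of $\sigma$ gives $J_0(x)>0$ for every $x>0$. This contradicts the monotonicity, so $M(\hat b)\leq 0$, and continuity of $M$ together with the intermediate value theorem on $[0,\hat b]$ provides a root $b^\ast\in(0,\hat b]$ of~\eqref{b_optimal}.
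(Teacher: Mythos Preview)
Your approach is sound and genuinely different from the paper's. The paper sets $g(b)=\phiqF(b)/\phiqFprime(b)-\psi_q(b)/\psi_q'(b)$ and $h(b)=I_F'(b)\,\phiqF(b)/\phiqFprime(b)-I_F(b)$, checks $g(0)>h(0)$ from the hypothesis, and then shows $g(\hat b)\le h(\hat b)$ by a direct estimate at the inflection point which relies on the convexity of $\phiqF$, on $I_F'\in[0,1]$, and crucially on the concavity of $I_F$ (Proposition~\ref{Prop:IF:Concave}). Your route via the linear ODE $(\eta M)'=\eta\,\psi_q''\,N$ combined with the contradiction through the $b$-monotonicity of $J_b(x)$ is more computational but has a striking advantage: it never invokes the concavity of $I_F$. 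You only need $I_F\ge 0$ on $[0,\infty)$ (forced by the hypothesis, since $I_F'(0)\le 1$ and $\phiqFprime(0+)<0$ give $I_F(0)>0$), the ODE for $I_F$, and the concave--convex structure of $\psi_q$. The Wronskian-type identities behind your monotonicity claim indeed reduce to $\partial_b(N/W)=\tfrac{2F\phiqF}{\sigma^2 W^2}M$ and $\partial_b(P/W)=\tfrac{2F\psi_q}{\sigma^2 W^2}M$ with $P:=I_F'\psi_q-I_F\psi_q'$, so $\partial_b J_b(x)$ is a non-negative multiple of $M(b)$ as you assert.

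One step needs tightening: the assertion that $\psi_q$ grows ``exponentially'' is false under the general Assumptions~\ref{assumptions}; for affine $\sigma$ the growth is only polynomial (cf.\ Section~\ref{sec:examples}). Your conclusion $J_b(x)\to 0$ is correct nonetheless, and the cleanest fix avoids growth rates altogether: from $X^b_t\le Y_t$ pathwise you get both $F(X^b_t)\le F(Y_t)$ and $\Ind_{\{X^b_t\ge b\}}\le \Ind_{\{Y_t\ge b\}}$, hence
\[
J_b(x)\ \le\ \E_x\!\SBRA{\int_0^\infty e^{-qt}F(Y_t)\,\Ind_{\{Y_t\ge b\}}\,\diff t}\ \xrightarrow[b\to\infty]{}\ 0
\]
by dominated convergence, the dominating integrand being $e^{-qt}F(Y_t)$, whose integrability follows exactly as in Proposition~\ref{Prop:Defined}. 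This also removes the slightly circular appeal to ``the linear growth of the value function''.
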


\begin{proof}
We follow the proof of Proposition 4.1 in \cite{locas-renaud_2024}. Set $g(b) := \frac{\phiqF(b)}{\phiqFprime(b)} - \frac{\psiq(b)}{\psiq^\prime(b)}$ and $h(b):= I_F'(b)\frac{\phiqF(b)}{\phiqFprime(b)} - I_F(b)$. Since $h(0)\leq g(0)$ by assumption, and both $g$ and $h$ are continuous, by the Intermediate Value Theorem, it suffices to prove that $g(\hat{b}) \leq h(\hat{b})$.

First, since $\hat{b}$ is the unique inflection point of $\psi_q$, we have $\frac{\psi_q(\hat{b})}{\psi_{q}^\prime(\hat{b})} = \frac{\mu(\hat{b})}{q}$, hence $g(\hat{b}) = \frac{\varphi_{q,F}(\hat{b})}{\varphi_{q,F}^\prime(\hat{b})} - \frac{\mu(\hat{b})}{q}$. In addition, thanks to the convexity of $\varphi_{q,F}$  we have $g(\hat{b}) \leq \frac{-F(\hat{b})}{q}.$
 \\
	Now, we focus on the computation of $h(\hat{b})$. Recall that $I_F$ satisfies the non-homogeneous ODE 
 \[(\Gen_F - q) I_F(b) + F(b) = 0.\] 
 In particular, for $\hat{b}$, and due to the concavity of $I_F$ we have
	\begin{align*}
		I_F(\hat{b}) &= \frac{\sigma^2(\hat{b})}{2 q}I_F''(\hat{b}) + \frac{\mu(\hat{b})}{q}I_F'(\hat{b}) + \frac{F(\hat{b})}{q}(1 - I_F'(\hat{b}) )\\
		&\leq \frac{\mu(\hat{b})}{q}I_F'(\hat{b}) + \frac{F(\hat{b})}{q}(1 - I_F'(\hat{b}) ).
	\end{align*}
	It follows that
	\begin{align*}
		h(\hat{b}) &\geq I_F'(\hat{b})\frac{\varphi_{q,F}(\hat{b})}{\varphi_{q,F}^\prime(\hat{b})} - \left( \frac{\mu(\hat{b})}{q}I_F'(\hat{b}) + \frac{F(\hat{b})}{q}(1 - I_F'(\hat{b}) ) \right)\\
		& = I_F'(\hat{b}) \left( \frac{\varphi_{q,F}(\hat{b})}{\varphi_{q,F}^\prime(\hat{b})} - \frac{\mu(\hat{b})}{q} \right) - \frac{F(\hat{b})}{q}(1 - I_F'(\hat{b}) )\\
		& = I_F'(\hat{b}) g(\hat{b}) - \frac{F(\hat{b})}{q}(1 - I_F'(\hat{b}) ) \\
		&\geq I_F'(\hat{b}) g(\hat{b}) + g(\hat{b}) (1 - I_F'(\hat{b}) ),
	\end{align*}
since $I_F'(\hat{b})\in[0,1]$. The result follows.
\end{proof}

\begin{remark}\label{Remark:Condition}
Note that the condition $I_F^\prime(0) - I_F(0) \phiqFprime(0+) > 1$ is equivalent to $J_0'(0+) > 1$.
\end{remark}

If $I_F^\prime(0) - I_F(0) \phiqFprime(0+) > 1$, then, as discussed above, by taking a solution $b^*$ to~\eqref{b_optimal}  we have $J'_{b^*}(b^*)=1$. Thus, it follows from~\eqref{J_b_1} and~\eqref{J_b_2} that $\frac{J_{b^\ast}(b^\ast)}{\psi_q(b^\ast)}=\frac{1}{\psi^{q'}(b^\ast)}$ and $\frac{J_{b^\ast}(b^\ast) - I_F(b^\ast)}{\varphi_{q,F}(b^\ast)}=\frac{ 1 - I_F'(b^\ast) }{ \varphi_{q,F}^\prime(b^\ast) }$. So, when $b^*>0$, we can write
	\begin{equation}\label{J_optimal}
	J_{b^\ast}(x) = \begin{cases} \frac{\psi_q(x)}{\psi_{q}^\prime(b^\ast)}, & 0 \leq x \leq b^\ast ,\\
	I_F(x) + \varphi_{q,F}(x)\left( \frac{ 1 - I_F'(b^\ast) }{ \varphi_{q,F}^\prime(b^\ast) } \right), & x \geq b^\ast . \end{cases}
	\end{equation}

\section{Verification of optimality}

In this section, we complete the proof of Theorem~\ref{Thm:b:Optimal}. First, let us recall that, if $I_F^\prime(0) - I_F(0) \phiqFprime(0+) \leq 1$, then we set $b^\ast = 0$, and if $I_F^\prime(0) - I_F(0) \phiqFprime(0+) > 1$, then $b^\ast>0$ is a solution to~\eqref{b_optimal}.

\begin{proposition}\label{Prop:Optim:Concave}
If $b^\ast=0$, then $J'_{0}(x) \in [0,1]$ for all $x>0$. If $b^\ast>0$, then $J_{b^\ast}$ is concave. In both cases, we have that $J_{b^\ast} \in \mathcal{C}^2(0,\infty)$ and it is a classical solution to the HJB equation given in~\eqref{HJB}.
\end{proposition}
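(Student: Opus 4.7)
The plan is to prove three things: $\mathcal{C}^2$-regularity of $J_{b^\ast}$, the stated concavity/derivative property, and the HJB equation; the first two feed directly into the third.

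\textbf{Regularity.} When $b^\ast=0$, $J_0=I_F-I_F(0)\phiqF$ is $\mathcal{C}^2$ on $(0,\infty)$ by Proposition~\ref{prop:I_F of class C^2}. When $b^\ast>0$, the two branches in \eqref{J_optimal} are $\mathcal{C}^2$ away from $b^\ast$; by the definition of $b^\ast$ via \eqref{b_optimal} one has $J_{b^\ast}'(b^\ast)=1$ and $J_{b^\ast}\in\mathcal{C}^1(0,\infty)$. To upgrade to $\mathcal{C}^2$ at $b^\ast$, I would evaluate the two defining ODEs at $b^\ast$, namely $(\Gen-q)J_{b^\ast}=0$ on the left (since $\psiq$ satisfies it) and $(\Gen_F-q)J_{b^\ast}=-F$ on the right (via Proposition~\ref{prop:I_F of class C^2}); with $J_{b^\ast}'(b^\ast)=1$, both yield $\tfrac{\sigma^2(b^\ast)}{2}J_{b^\ast}''(b^\ast\pm)=qJ_{b^\ast}(b^\ast)-\mu(b^\ast)$, so the one-sided second derivatives match.

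\textbf{Concavity / $J_0'\in[0,1]$.} For $b^\ast>0$, $J_{b^\ast}=\psiq/\psiq'(b^\ast)$ is concave on $[0,b^\ast]$ because $\psiq$ is concave on $[0,\hat b]$ (Lemma~\ref{lemma:concave-convex}) and $b^\ast\leq\hat b$ (Proposition~\ref{prop_exist_b}); on $[b^\ast,\infty)$, $J_{b^\ast}=I_F+C\phiqF$ with $C=(1-I_F'(b^\ast))/\phiqFprime(b^\ast)\leq 0$ (using $I_F'(b^\ast)\in[0,1]$ from Proposition~\ref{Prop:Increasing:Derivative} and $\phiqFprime<0$), so the concavity of $I_F$ (Proposition~\ref{Prop:IF:Concave}) and convexity of $\phiqF$ (Lemma~\ref{lemma:concave-convex}) give concavity there; the $\mathcal{C}^2$-match at $b^\ast$ then yields global concavity. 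For $b^\ast=0$, $J_0'\geq 0$ follows from the monotonicity of $J_0$, which a comparison argument for the SDE \eqref{eq:X} delivers because $F\geq 0$ on $[0,\infty)$. For the upper bound, write
\[
J_0'(x)-1=\bigl(I_F'(x)-1\bigr)+I_F(0)\bigl(-\phiqFprime(x)\bigr),
\]
and use that $b^\ast=0$ is precisely $J_0'(0+)\leq 1$. If $I_F(0)\leq 0$, both summands are nonpositive. If $I_F(0)>0$, concavity of $I_F$ makes $I_F'$ nonincreasing and convexity of $\phiqF$ together with $\phiqFprime<0$ makes $-\phiqFprime$ nonincreasing, so
\[
I_F(0)\bigl(-\phiqFprime(x)\bigr)\leq I_F(0)\bigl(-\phiqFprime(0+)\bigr)\leq 1-I_F'(0+)\leq 1-I_F'(x),
\]
yielding $J_0'(x)\leq 1$.

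\textbf{HJB.} The HJB equation \eqref{HJB} then follows region-by-region. For $b^\ast>0$, concavity and $J_{b^\ast}'(b^\ast)=1$ give $J_{b^\ast}'\geq 1$ on $[0,b^\ast]$ (so the supremum is attained at $\mathrm{c}=0$ and $(\Gen-q)J_{b^\ast}=0$) and $J_{b^\ast}'\leq 1$ on $[b^\ast,\infty)$ (so the supremum is attained at $\mathrm{c}=F(x)$ and $(\Gen_F-q)J_{b^\ast}=-F$ rearranges to the HJB); $J_{b^\ast}(0)=0$ is immediate. For $b^\ast=0$, $J_0'\in[0,1]$ makes the supremum equal to $F(x)(1-J_0'(x))$ everywhere, so \eqref{HJB} reduces to $(\Gen_F-q)J_0=-F$ with $J_0(0)=0$, which holds by construction.

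The main obstacle is the upper bound $J_0'\leq 1$ in the degenerate case $b^\ast=0$ with $I_F(0)>0$: the two summands in $J_0'-1$ then have opposite signs, and propagating the boundary inequality $J_0'(0+)\leq 1$ to the whole half-line requires simultaneously exploiting the monotonicity of $I_F'$ (from concavity of $I_F$, Proposition~\ref{Prop:IF:Concave}) and of $-\phiqFprime$ (from convexity of $\phiqF$, Lemma~\ref{lemma:concave-convex}).
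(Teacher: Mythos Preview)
Your proof is correct and follows essentially the same route as the paper. The only notable stylistic difference is the $\mathcal{C}^2$-matching at $b^\ast$: the paper computes $J_{b^\ast}''(b^\ast-)$ and $J_{b^\ast}''(b^\ast+)$ explicitly from the ODEs satisfied by $\psiq$, $\phiqF$ and $I_F$ and then invokes the defining equation \eqref{b_optimal}, whereas you obtain the same conclusion in one line by evaluating $(\Gen-q)J_{b^\ast}=0$ and $(\Gen_F-q)J_{b^\ast}=-F$ at $b^\ast$ with $J_{b^\ast}'(b^\ast)=1$; your version is cleaner but the content is identical. Your treatment of the bound $J_0'\leq 1$ in the case $b^\ast=0$, splitting into $I_F(0)\leq 0$ and $I_F(0)>0$ and invoking the convexity of $\phiqF$ explicitly, is in fact slightly more careful than the paper's compressed display (which needs the same convexity, available from Lemma~\ref{lemma:concave-convex}, even though only ``$\phiqF$ strictly decreasing'' is cited there).
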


\begin{proof}
First, note that if $I_F^\prime(0) - I_F(0) \phiqFprime(0+) \leq 1$, then $I_F(0) \geq \frac{I_F^\prime(0)-1}{\phiqFprime(0+)}$. Consequently, together with the fact that $J_0(x) = I_F(x) - I_F(0) \phiqF(x)$ for $x\in [0,\infty)$, that $I_F$ is concave and that $\phiqF$ is strictly decreasing, we can write, for $x>0$,
\begin{equation}\label{J0_bound_1}
J_0'(x) = I_F'(x) - I_F(0)\phiqFprime(x) \leq I_F'(0) - \frac{I_F'(0)-1}{\phiqFprime(0+)} \phiqFprime(0+) = 1 .
\end{equation}

As $J_0$ is nondecreasing, we conclude that $J_0'(x) \in [0,1]$ for all $x \in (0,\infty)$.

We have already mentioned that
\[
(\Gen_F - q)J_0(x) = - F(x), \quad x \in (0,\infty).
\]
Consequently, we have
\[
(\Gen - q) J_0(x) + \sup_{l \in [0,F(x)]} l (1 - J'_0(x)) = (\Gen_F - q) J_0(x) + F(x) J'_0(x) + F(x) (1 - J'_0(x)) = 0 ,
\]
for all $x>0$. In other words, $J_0$ is a classical solution to the HJB Equation in~\eqref{HJB}.

Now, we consider the case $b^\ast > 0$, i.e., we assume $I_F^\prime(0) - I_F(0) \phiqFprime(0+) > 1$. Using the representation of $J_{b^*}$ given in~\eqref{J_optimal}, along with the fact that $b^\ast \leq \hat{b}$, that $\varphi_{q,F}$ is decreasing, and that $J_{b^*} \in \mathcal{C}^1 (0,\infty)$, it follows easily that $J_{b^\ast}$ is concave on $\R^+$. It remains to verify that $J_{b^*} \in \mathcal C^2 (0,\infty)$. To this end, we have to prove that $ J''_{b^\ast}(b^\ast -) = J''_{b^\ast}(b^\ast +)$.

On one hand, we have
	\[
	J''_{b^\ast}(b^\ast -) = \frac{\psi_q^{\prime\prime}(b^\ast)}{\psi_{q}^\prime(b^\ast)} .
	\]
Note that, by definition, $\psi_q$ is such that $(\Gen - q) \psi_q(x) = 0$, for $x \in (0,\infty)$, which is equivalent to
	\begin{equation}\label{c2_aux1}
	\frac{\psi_q^{\prime\prime}(b^\ast)}{\psi_{q}^\prime(b^\ast)} = \frac{2}{\sigma^2(b^\ast)}\left(  q \frac{\psi_q(b^\ast)}{\psi_{q}^\prime(b^\ast)} - \mu(b^\ast)  \right).
	\end{equation}
On the other hand, we have
	\begin{align*}
		J''_{b^\ast}(b^\ast +) &= I_F''(b^\ast) + \varphi^{\prime\prime}_{q,F}(b^\ast)\left( \frac{ 1 - I_F'(b^\ast) }{ \varphi_{q,F}^\prime(b^\ast) } \right).
	\end{align*}
Again, by definition, $\varphi_{q,F}$ is such that $(\Gen_F - q) \varphi_{q,F}(x) = 0$, for $x \in (0,\infty)$, which is equivalent to
	\begin{equation}\label{c2_aux2}
		\frac{\varphi_{q,F}^{\prime\prime}(b^\ast)}{\varphi_{q,F}^\prime(b^\ast)} = \frac{2}{\sigma^2(b^\ast)}\left(  q \frac{\varphi_{q,F}(b^\ast)}{\varphi_{q,F}^\prime(b^\ast)} - (\mu(b^\ast) - F(b^\ast) )  \right) .
	\end{equation}
Similarly, for $I_F$, we can write
	\begin{equation}\label{c2_aux3}
		I_F''(b^\ast) + \frac{2}{\sigma^2(b^\ast)}(\mu(b^\ast) - F(b^\ast))I_F'(b^\ast) = \frac{2}{\sigma^2(b^\ast)}(q I_F(b^\ast) - F(b^\ast) ) .
	\end{equation}
Hence, by substituting~\eqref{c2_aux2} and~\eqref{c2_aux3} in the expression for $J''_{b^\ast}(b^\ast +$, we can write
	\begin{align*}
		J''_{b^\ast}(b^\ast +) &= I_F''(b^\ast) +  \frac{2}{\sigma^2(b^\ast)}\left(  q \frac{\varphi_{q,F}(b^\ast)}{\varphi_{q,F}^\prime(b^\ast)} - (\mu(b^\ast) - F(b^\ast) )  \right) ( 1 - I_F'(b^\ast) )\\
		&= \frac{2q}{\sigma^2(b^\ast)}\left( I_F(b^\ast) - \frac{\varphi_{q,F}(b^\ast)}{\varphi_{q,F}^\prime(b^\ast)} I_F'(b^\ast) \right) + \frac{2q}{\sigma^2(b^\ast)} \frac{\varphi_{q,F}(b^\ast)}{\varphi_{q,F}^\prime(b^\ast)} - \frac{2}{\sigma^2(b^\ast)} \mu(b^\ast) \\
		&= \frac{2q}{\sigma^2(b^\ast)} \left( \frac{\psi_q(b^\ast)}{\psi_{q}^\prime(b^\ast)} - \frac{\varphi_{q,F}(b^\ast)}{\varphi_{q,F}^\prime(b^\ast)} \right) + \frac{2q}{\sigma^2(b^\ast)} \frac{\varphi_{q,F}(b^\ast)}{\varphi_{q,F}^\prime(b^\ast)} - \frac{2}{\sigma^2(b^\ast)} \mu(b^\ast)\\
		& = \frac{2}{\sigma^2(b^\ast)}\left( q\frac{\psi_q(b^\ast)}{\psi_{q}^\prime(b^\ast)}  - \mu(b^\ast) \right) = J_{b^*}(b^*-) ,
	\end{align*}
	where in the penultimate equality we used Proposition \ref{prop_exist_b}.

Now, from the smooth-fit condition~\eqref{b_optimal} and the concavity of $J_{b^\ast}$, we have
\begin{equation}\label{eq:J-prime}
\begin{cases}
J'_{b^\ast}(x) > 1 & \text{if $x \in (0,b^\ast)$,}\\
J'_{b^\ast}(x) \leq 1 & \text{if $x \in (b^\ast,\infty)$.}
\end{cases}
\end{equation}
As a consequence, since we also have
\begin{equation}\label{eq:J-HJB}
\begin{cases}
(\Gen - q) J_{b^\ast}(x)=0 & \text{if $x \in (0,b^\ast)$,}\\
(\Gen - q) J_{b^\ast}(x) = - F(x) (1-J_{b^\ast}^\prime(x)) & \text{if $x \in (b^\ast,\infty)$,} 
\end{cases}
\end{equation}
it follows that $J_{b^\ast}$ is a classical solution of the HJB equation given in~\eqref{HJB}.
\end{proof}

\subsection{Proof of Theorem~\ref{Thm:b:Optimal}}
We have already proven in Proposition \ref{prop_exist_b} that $b^\ast$ is well-defined, and in Proposition \ref{Prop:Optim:Concave} that $J_{b^\ast}$ is a classical solution of the associated HJB equation. All is left to prove is that $J_{b^\ast}(x)\geq V(x)$ for all $x \geq 0$.

First note that from \eqref{eq:J-prime} and \eqref{eq:J-HJB}, for $x>0$ and $u \in [0,F(x)]$, we have
\begin{equation}\label{J_b_generator}
(\Gen - q) J_{b^\ast}(x) + u\left(1-J_{b^\ast}^\prime(x) \right) \leq 0 .
\end{equation}
Let $\ell$ be an arbitrary admissible strategy and $X^\ell$ the associated controlled process with ruin time $\tau_0^\ell$. Hence, as $J_{b^\ast} \in \mathcal{C}^2 (0,\infty)$, we can apply It\^{o}'s Formula to obtain
\begin{align*}
e^{-q{t\wedge\tau_0^\ell}} & J_{b^\ast}(X^\ell_{t\wedge\tau_0^\ell}) - J_{b^\ast}(X^\ell_0) \\
&= M_t^\ell + \int_0^{t\wedge\tau_0^\ell}e^{-qs} \PAR{ \frac{1}{2}\sigma^2(X^\ell_s) J_{b^\ast}''(X^\ell_s) + (\mu(X^\ell_s) - \ell_s) J_{b^\ast}'(X^\ell_s) - q J_{b^\ast}(X^\ell_s)}\diff s\\
&= M_t^\ell + \int_0^{t\wedge\tau_0^\ell} e^{-qs} \PAR{ (\Gen-q) J_{b^\ast}(X^\ell_s) + (1-J_{b^\ast}^\prime(X^\ell_s)) \ell_s^\ell - \ell_s^\ell} \diff s
\end{align*}
where $M_t^\ell := \int_0^{t\wedge\tau_0^\ell} e^{-qs}\sigma(X^\ell_s) J_{b^\ast}'(X^\ell_s) \diff W_s $ is a local martingale.

Let $(\rho_n)_{n\geq 1}$ be a localising sequence and set $T_n := t\wedge\tau_0^\ell\wedge\rho_n$. Since $\ell$ is an admissible strategy, we can use~\eqref{J_b_generator} with $u=\ell_s$ and take expectations (with initial value $x \geq 0$) to get
\[
\E_x \SBRA{ e^{-q{T_n}}J_{b^\ast}(X^\ell_{T_n}) } - J_{b^\ast}(x) \leq - \E_x \SBRA{ \int_0^{T_n} e^{-qs} \ell_s \diff s } .
\]

Since $J_{b^\ast} \geq 0$, we further have
    \[
    J_{b^\ast}(x) \geq \E_x \SBRA{ \int_0^{T_n} e^{-qs} \ell_s \diff s } ,
    \]
from which we take limits when $t \to \infty$ and $n\to \infty$, yielding
    \[
    J_{b^\ast}(x) \geq J(x,\ell) = \E_x \SBRA{ \int_0^{\tau_0^\ell} e^{-qs} \ell_s \diff s } , \quad \text{for all $x \geq 0$.}
    \]
    Since this last inequality holds for any admissible strategy $\ell$, the proof is complete.

\section{Examples}\label{sec:examples}

In this section, we first present some examples where the functions $I_F,\varphi_{q,F}$ and $\psi_q$ are explicit, followed by a numerical analysis of a logistic-type model.

\subsection{When $\mu$ and $F$ are affine functions}\label{sec:affine-examples}

Let $\mu$ and $F$ be affine functions given by
\[
\mu(x)=\mu_0+\mu_1 x \quad \text{and} \quad F(x)=F_0+F_1 x .
\]
Of course, all functions must satisfy our standing assumptions.

From Proposition~\ref{rk:IF-mu-F-affine}, we have
\[
I_F(x)=\frac{1}{q}F_0+\frac{F_1}{q(q-c_1)}\PAR{qx+c_0} ,
\]
where $c_i=\mu_i-F_i$, for $i=0,1$.

From Proposition~\ref{prop_exist_b}, when $ \frac{F_1}{(q-c_1)} -I_F(0)\varphi_{q,F}^\prime(0)>1$, there exists a unique positive solution $b^*$ to
\begin{equation}\label{eq:def-bstar}
\frac{\varphi_{q,F}(b)}{\varphi_{q,F}^\prime(b)} - \frac{\psi_q(b)}{\psi_{q}^\prime(b)} = \frac{F_1}{q-c_1}\frac{\varphi_{q,F}(b)}{\varphi_{q,F}^\prime(b)} - I_F(b) ,
\end{equation}
and then the value function is $V = J_{b^\ast}$. If the above condition is not satisfied, then we have $V=J_0$.

Before looking at specific choices of $\sigma(x)$ for which $\varphi_{q,F}$ and $\psi_q$ are well-known \textit{special functions}, let us introduce a few intermediate results. More details are provided in Appendix~\ref{App:proof-example} and we refer the reader to \cite{abramowitz-stegun_1964} for further properties of the special functions used in this section. 

Let us first introduce the Kummer confluent hypergeometric function $_1F_1(a,b;z)$ (also denoted $M(a,b;z)$ in the literature) and  the confluent hypergeometric function of the second kind $U(a,b;z)$, which are linearly independent solutions of 
\[
zu''+(b-z)u'-au=0.
\]
The function $U(a,b;z)$ is an explicit linear combination of $_1F_1(a,b;z)$ and $z^{1-b}\,_1F_1(1+a-b,2-b;z)$. When $a,b>0$, $_1F_1(a,b;z)$ can be represented as an integral
\begin{equation}\label{eq:1F1-integral}
_1F_1(a,b;z)=\frac{\Gamma(b)}{\Gamma(b-a)\Gamma(a)}\int_0^1\mathrm{e}^{zt}t^{a-1}(1-t)^{b-a-1}\diff t,
\end{equation}
and when $a>0$, $U(a,b;z)$ has also an integral representation
\begin{equation}\label{eq:U-integral}
U(a,b;z)=\frac{1}{\Gamma(a)}\int_0^\infty {\rm e}^{-zt}t^{a-1}(1+t)^{b-a-1}\diff t.
\end{equation}
The first derivatives of $_1F_1(a,b;\cdot)$ and $U(a,b;\cdot)$ are such that
\begin{align*}
\PAR{_1F_1}'(a,b;z)&=\frac{a}{b}\,_1F_1(a+1,b+1;z)\\
U'(a,b;z)&=-aU(a+1,b+1;z).
\end{align*}

We will also use the parabolic cylinder function $D_{-\lambda}$, which is a solution to $u''-(\lambda -1/2+z^2/4)u=0$. It has the following integral representation when $\lambda>0$:
\begin{equation}\label{eq:D-integral}
D_{-\lambda}(z)=\frac{1}{\Gamma(\lambda)}\mathrm{e}^{-z^2/4}\int_0^\infty \mathrm{e}^{-zt-t^2/2}t^{\lambda-1}\diff t,
\end{equation}
and its first derivative satisfies $D_{-\lambda}'(z)=-\frac{1}{2}zD_{-\lambda}(z)-\lambda D_{-\lambda-1}(z)$.

\subsubsection{When $\sigma$ is a constant}

Assume $\sigma(x)=\sigma > 0$. In this case, $X$ and $Y$ are Ornstein-Uhlenbeck processes; note that this specific case has been already studied in \cite[Section 4.1]{locas-renaud_2024}. The corresponding operator is given by
\[
\PAR{\cL_F -q}u=\frac{\sigma^2}{2}u''+\PAR{c_0+c_1x}u'-qu.
\]

We extend now the result of \cite{locas-renaud_2024,renaud-simard_2021} for any $c_1\in\mathbb{R}$ and complete their theoretical analysis with additional computations. For Ornstein-Uhlenbeck processes, introducing $a_F=\frac{q}{|c_1|}+\frac{\mathrm{sign}(c_1)}{2}$, we have 
\begin{align}\label{eq:phi-sigma-constant}
\varphi_{q,F}(x)&=
\begin{cases}
\mathrm{e}^{-\frac{x(2c_0+c_1x)}{2\sigma^2}}\frac{D_{-a_F-\frac{1}{2}}\PAR{\mathrm{sign}(c_1)\sqrt{\frac{2}{|c_1|}}\frac{c_0+c_1x}{\sigma}}}{D_{-a_F-\frac{1}{2}}\PAR{\mathrm{sign}(c_1)\sqrt{\frac{2}{|c_1|}}\frac{c_0}{\sigma}}}\quad& \text{when }c_1\neq 0,\\[0.4cm]
\mathrm{e}^{-\frac{\sqrt{c_0^2+2q\sigma^2}+c_0}{\sigma^2}x}\quad &\text{when }c_1=0.
\end{cases}
\end{align}

We easily see that $\varphi_{q,F}(0)=1$, and by \eqref{eq:D-integral} that $\varphi_{q,F}$ is decreasing to $0$. Moreover, we have
\begin{align}\label{eq:psi-sigma-constant}
\psi_q(x)&=
\begin{cases}
\kappa\mathrm{e}^{-\frac{x(2\mu_0+\mu_1x)}{2\sigma^2}} \frac{y_1\PAR{a_0;\sqrt{\frac{2}{|\mu_1|}}\frac{\mu_0+\mu_1x}{\sigma}}}{y_1\PAR{a_0;\sqrt{\frac{2}{|\mu_1|}}\frac{\mu_0}{\sigma}}}-\kappa \varphi_0(x)\quad & \text{when }\mu_1\neq 0,
\\[0.2cm]
\kappa\mathrm{e}^{\frac{\sqrt{\mu_0^2+2q\sigma^2}-\mu_0}{\sigma^2}x}-\kappa\varphi^q_0(x)\quad &\text{when }\mu_1=0,\\
\end{cases}
\end{align}
where  $y_1(a;z)=\,_1F_1\PAR{\frac{1}{2}a+\frac{1}{4},\frac{1}{2};\frac{z^2}{2}}\mathrm{e}^{-z^2/4}$, $a_0:=a_F$ and $\varphi_0^q:=\varphi_{q,F}$ for $F\equiv 0$, and $\kappa$ is such that $ \PAR{\psi_q}'(0)=1$. 
Note that we can compute easily  $\PAR{\varphi_{q,F}}'(0)$, and $\kappa$ has an explicit expression in terms of the model parameters. When $\mu_1,c_1\neq 0$, we have 
\[
\varphi_{q,F}'(0)=-\frac{c_0}{\sigma^2}+\frac{\sqrt{2|c_1|}}{\sigma}\frac{D'_{-a_F-\frac{1}{2}}\PAR{{\mathrm{sign}(c_1)\sqrt{2}c_0}/{\sqrt{|c_1|}\sigma}}}{D_{-a_F-\frac{1}{2}}\PAR{{\mathrm{sign}(c_1)\sqrt{2}c_0}/{\sqrt{|c_1|}\sigma}}}
\]
and 
\[
\kappa^{-1}=\frac{\sqrt{2|\mu_1|}}{\sigma}\SBRA{\mathrm{sign}(\mu_1)\frac{y'_1\PAR{a_0;{\sqrt{2}\mu_0}/{\sqrt{|\mu_1|}\sigma}}}{y_1\PAR{a_0;{\sqrt{2}\mu_0}/{\sqrt{|\mu_1|}\sigma}}}-\frac{D'_{-a_0-\frac{1}{2}}\PAR{{\mathrm{sign}(\mu_1)\sqrt{2}\mu_0}/{\sqrt{|\mu_1|}\sigma}}}{D_{-a_0-\frac{1}{2}}\PAR{{\mathrm{sign}(\mu_1)\sqrt{2}\mu_0}/{\sqrt{|\mu_1|}\sigma}}}}.
\]

\subsubsection{When $\sigma$ is an affine function}

Assume $\sigma(x)=\sigma_0+\sigma_1x$, with $\sigma_0, \sigma_1> 0$. 
For example, if $\mu_0=F_0=\sigma_0=0$, then $X$ and $Y$ are geometric Brownian motions.  The operator is given by
\[
\PAR{\cL_F -q}u=\frac{1}{2}\PAR{\sigma_0+\sigma_1 x}^2u''+\PAR{c_0+c_1x}u'-qu.
\]

Let $\Delta_F>0$ with $\Delta_F^2=(\sigma_1^2-2c_1)^2+8q\sigma_1^2$, $a_F=\PAR{\Delta_F-\sigma_1^2+2c_1}/\PAR{2\sigma_1^2}$ and $b_F=1+{\Delta_F}/{\sigma_1^2}$.  Then, 
\begin{align}\label{eq:phi-sigma-affine}
\varphi_{q,F}(x)&=\PAR{1+ \frac{\sigma_1}{\sigma_0}x}^{-a_F}\times\frac{_1F_1\PAR{a_F,b_F;\frac{2\PAR{c_0\sigma_1-c_1\sigma_0}}{\sigma_1^2\PAR{\sigma_0+\sigma_1x} }}}{_1F_1\PAR{a_F,b_F;\frac{2\PAR{c_0\sigma_1-c_1\sigma_0}}{\sigma_1^2\sigma_0 }}}.
\end{align}
We easily observe that $\varphi_{q,F}(0)=1$ and $\lim_{x\to +\infty}\varphi_{q,F}(x)=0$.
We now introduce
\begin{align} \label{eq:psi-sigma-affine}
 \psi_q(x)(x)&=\kappa\PAR{1+ \frac{\sigma_1}{\sigma_0}x}^{-a_0+b_0-1}\times\frac{_1F_1\PAR{1+a_0-b_0,2-b_0;\frac{2\PAR{c_0\sigma_1-c_1\sigma_0}}{\sigma_1^2\PAR{\sigma_0+\sigma_1x} }}}{_1F_1\PAR{1+a_0-b_0,2-b_0;\frac{2\PAR{c_0\sigma_1-c_1\sigma_1}}{\sigma_1^2\sigma_0 }}}-\kappa\varphi_0^q(x),
\end{align}
where  $\kappa$ is such that $ \PAR{\psi_q}'(0)=1$, and $a_0:=a_F$, $b_0:=b_F$, and $\varphi^q_0:=\varphi_{q,F}$ for $F\equiv 0$. Note that $\kappa$ has an explicit (but not nice) expression in terms of the model parameters. 
We can also easily compute that
\[
\varphi_{q,F}'(0)=
-a_F\frac{\sigma_1}{\sigma_0}-\frac{2\PAR{c_1\sigma_0-c_0\sigma_1}a_F}{\sigma_1\sigma_0^2b_F}\times
\frac{_1F_1\PAR{1+a_F,1+b_F;\frac{2\PAR{c_0\sigma_1-c_1\sigma_0}}{\sigma_1^2\sigma_0 }}}
{_1F_1\PAR{a_F,b_F;\frac{2\PAR{c_0\sigma_1-c_1\sigma_0}}{\sigma_1^2\sigma_0 }}}.
\]

\subsubsection{When $\sigma^2$ is an affine function}

 Assume $\sigma(x)=\sqrt{\sigma_0+\sigma_1x}$, with $\sigma_0, \sigma_1> 0$. For example, if $\sigma_0=0$, then $X$ and $Y$ are square-root or Cox–Ingersoll–Ross processes. We have
\[
\PAR{\cL_F -q}u=\frac{1}{2}\PAR{\sigma_0+\sigma_1 x}u''+\PAR{c_0+c_1x}u'-qu.
\]

Let $a_F=\frac{q}{|c_1|}$ and $b_F=1+2\frac{c_1\sigma_0-c_0\sigma_1}{\sigma_1^2}$. Then, when $c_1>0$,
\begin{align}\label{eq:phi-sigma2-affine+}
\varphi_{q,F}(x)=\mathrm{e}^{-2\frac{c_1}{\sigma_1}x}\PAR{1+\frac{\sigma_1}{\sigma_0} x}^{b_F}\times\frac{U\PAR{1+a_F,1+b_F;\frac{2c_1(\sigma_0+\sigma_1x)}{\sigma_1^2}}}{U\PAR{1+a_F,1+b_F;\frac{2c_1\sigma_0}{\sigma_1^2}}},
\end{align}
and, when $c_1<0$,
\begin{align*}
\varphi_{q,F}(x)
=\PAR{1+\frac{\sigma_1}{\sigma_0}x}^{b_F}\times\frac{U\PAR{a_F+b_F,1+b_F;-\frac{2c_1(\sigma_0+\sigma_1x)}{\sigma_1^2}}}{U\PAR{a_F+b_F,1+b_F;-\frac{2c_1\sigma_0}{\sigma_1^2}}}
=\frac{U\PAR{a_F,1-b_F;-\frac{2c_1(\sigma_0+\sigma_1x)}{\sigma_1^2}}}{U\PAR{a_F,1-b_F;-\frac{2c_1\sigma_0}{\sigma_1^2}}}.
\end{align*}
Using~\eqref{eq:U-integral}, we easily observe that $\varphi_{q,F}(0)=1$ and $\lim_{x\to +\infty}\varphi_{q,F}(x)=0$.
Moreover, we have, when $\mu_1>0$,
\begin{align}\label{eq:psi-sigma2-affine+}
   \psi_q(x)&= \kappa \mathrm{e}^{-2\frac{\mu_1}{\sigma_1}x}\PAR{1+\frac{\sigma_1}{\sigma_0} x}^{b_0}\times \frac{_1F_1\PAR{1+a_0,1+b_0,\frac{2\mu_1(\sigma_0+\sigma_1x)}{\sigma_1^2}}}{_1F_1\PAR{1+a_0,1+b_0,\frac{2\mu_1\sigma_0}{\sigma_1^2}}}-\kappa \varphi^q_0(x),
\end{align}
and, when $\mu_1<0$,
\begin{align*}
   \psi_q(x)
   &=   \kappa \PAR{1+\frac{\sigma_1}{\sigma_0} x}^{b_0}\times \frac{_1F_1\PAR{a_0+b_0, b_0,-\frac{2\mu_1(\sigma_0+\sigma_1x)}{\sigma_1^2}}}{_1F_1\PAR{a_0+b_0,b_0,-\frac{2\mu_1\sigma_0}{\sigma_1^2}}}-\kappa \varphi^q_0(x),
\end{align*}
where $\kappa$ is explicitly determined by $\PAR{\psi_q}'(0)=1$, and $a_0:=a_F$, $b_0:=b_F$, and $\varphi^q_0:=\varphi_{q,F}$ for $F\equiv 0$. 
When $c_1=0$ or $\mu_1=0$, the functions $\varphi_{q,F}$ and $\psi_q$ have also an explicit expression using modified Bessel functions.

We easily compute, when $c_1>0$,
\[
\varphi_{q,F}'(0)=\frac{\sigma_1-2c_0}{\sigma_0}-2\frac{c_1+q}{\sigma_1}\,
\frac{U\PAR{2+a_F,2+b_F;\frac{2c_1\sigma_0}{\sigma_1^2}}}{U\PAR{1+a_F,1+b_F;\frac{2c_1\sigma_0}{\sigma_1^2}}},
\]
and, when $c_1<0$,
\[
\varphi_{q,F}'(0)=b_F\frac{\sigma_1}{\sigma_0}-2\frac{q}{\sigma_1}\frac{U\PAR{1+a_F,2-b_F;-\frac{2c_1\sigma_0}{\sigma_1^2}}}{U\PAR{a_F,1-b_F;-\frac{2c_1\sigma_0}{\sigma_1^2}}}.
\]

\bigskip

In Figure~\ref{graph:phi-psi}, the curves of $\varphi_{q,0}$ and $\psi_q$ are drawn in the above three cases for a given set of parameters. We observe the properties announced in Lemma~\ref{lemma:concave-convex}, i.e., when $\mu_1<q$, $\varphi_{q,0}$ is strictly convex and $\psi_q$ is concave-convex.
\begin{figure}[h!]
\centering
\begin{tabular}{cccc}
      \includegraphics[scale=0.3]{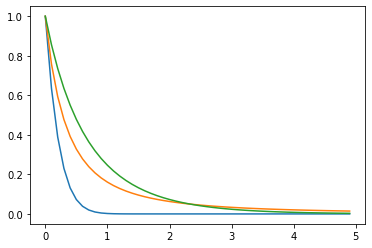}&
      \includegraphics[scale=0.3]{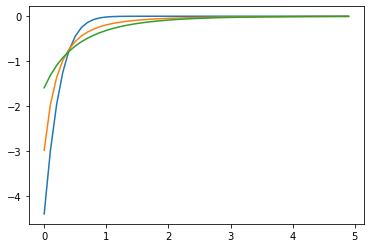}
     &\includegraphics[scale=0.3]{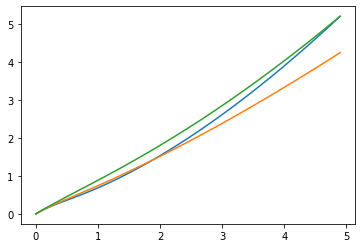}&
      \includegraphics[scale=0.3]{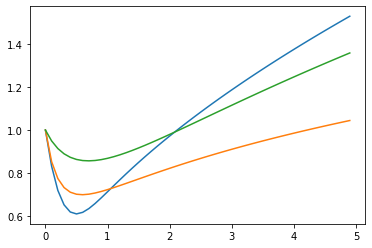}
     \\ [-0.2 cm]
     \small{$\varphi_{q,0}$}&\small{$\varphi'_{q,0}$} &
     \small{$\psi_q$}& \small{$\psi'_q$}
\end{tabular}
\caption{The functions $\varphi_{q,0}$ and $\psi_q$ and their first derivatives when $\mu_0=0.09,\, \mu_1=0.21, \, \sigma_0=0.3,\, \sigma_1=0.5$ and $q=0.33$: in blue when $\sigma(x)=\sigma_0$, in orange when $\sigma(x)=\sigma_0+\sigma_1 x$, and in green when $\sigma^2(x)=\sigma_0+\sigma_1 x$.}
\label{graph:phi-psi}
\end{figure}

We now compute $b^\ast$ numerically, as the root of Equation~\eqref{eq:def-bstar}, and the value function $J_{b^\ast}$, given by Equation~\eqref{J_optimal}, for different values of $(F_0,F_1)$. 
In Figure~\ref{graph:heatmap}, we observe the dependence of $b^*$ on $(F_0,F_1)\in\mathbb{R}_+^2$, the other parameters being fixed. The  white area below the red curve represents the case $b^*=0$, and the area above the red curve coincides with the case $b^*>0$: the darker the blue, the higher the value of $b^*$. 
\begin{figure}[h!]
\centering
\begin{tabular}{ccc}
      \includegraphics[scale=0.4]{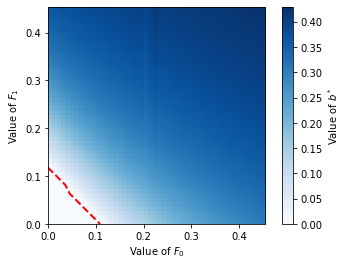}&
      \includegraphics[scale=0.4]{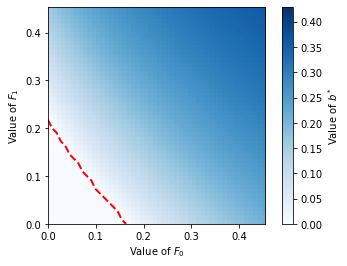} &
      \includegraphics[scale=0.4]{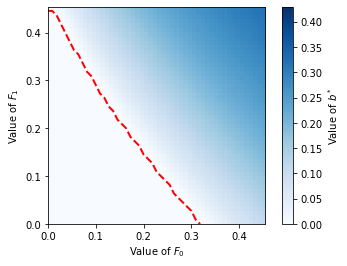}
     \\ 
     \small{when $\sigma(x)=\sigma_0$}& \small{when $\sigma(x)=\sigma_0+\sigma_1 x$}& \small{when $\sigma^2(x)=\sigma_0+\sigma_1 x$}
\end{tabular}
\caption{Heat maps of $b^*$ as a function of $(F_0,F_1)\in\mathbb{R}_+^2$, when $\mu_0=0.09,\, \mu_1=0.21, \, \sigma_0=0.3,\, \sigma_1=0.5$ and $q=0.33$.}
\label{graph:heatmap}
\end{figure}

Using~\eqref{J_optimal}, and the curves of $J_{b^\ast}$ and their corresponding first derivatives are drawn in Figure~\ref{graph:value} for different values of $(\sigma_0,\sigma_1)$. In all cases, we observe that the first derivative is decreasing which coincides with the fact that $J_{b^\ast}$ is concave on $\R_+$.

\begin{figure}[h!]
\centering
\begin{tabular}{ccc}
      \includegraphics[scale=0.35]{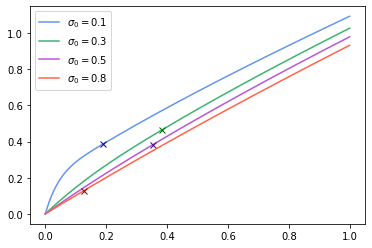}&
      \includegraphics[scale=0.35]{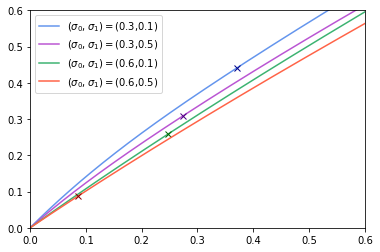} &
      \includegraphics[scale=0.35]{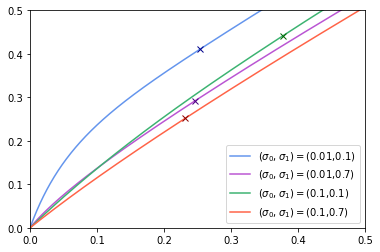}
      \\[-0.2cm]
     \small{$\sigma(x)=\sigma_0$}& \small{$\sigma(x)=\sigma_0+\sigma_1 x$}& \small{$\sigma^2(x)=\sigma_0+\sigma_1 x$}
      \\
       \includegraphics[scale=0.35]{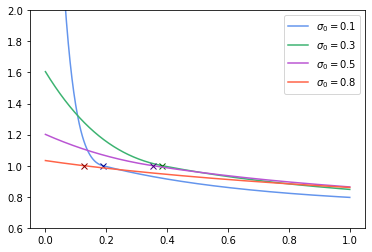}&
      \includegraphics[scale=0.35]{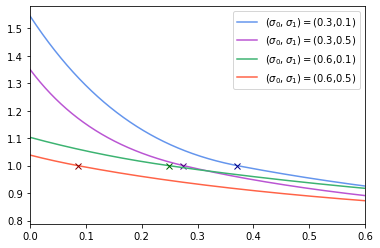} &
      \includegraphics[scale=0.35]{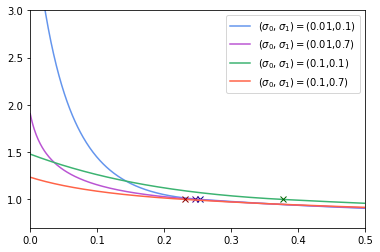}
\end{tabular}

\caption{The value function $J_{b^\ast}$ (on the first row) and its first derivative (on the second row) when $\mu_0=0.09,\, \mu_1=0.21, \, F_0=0.3,\, F_1=0.3$ and $q=0.33$, and for different couples $(\sigma_0,\sigma_1)$. The optimal refraction threshold is marked with a cross.}
\label{graph:value}
\end{figure}
\subsection{A logistic-type model}

We consider a modification of the logistic model, where the drift coefficient is taken as $\mu(x)=\mu_0+\mu_1x(1-x/K)$ and the diffusion coefficient is given by $\sigma(x)=\sqrt{\sigma_0+\sigma_1 x}$. In the literature of logistic models, the parameter $\mu_1$ is known as the \textit{population growth rate}, while the parameter $K$ is known as the \textit{carrying capacity of the environment} (see, e.g., \cite[Section 2]{alvarez-shepp_1998}). Note that the parameters $\mu_0 >0$ and $\sigma_0>0$ are important in order to satisfy Assumptions \ref{assumptions}. We also remark that, given a discount factor $q>0$, the value of $\mu_1$ is chosen so that $\mu_1 \leq q$, as one can easily verify that $\mu'(0) = \mu_1$. Throughout this section, we will consider that the bound function is of the form $F(x) = F_0 + F_1x$ for all $x \in \R$.

While in this case we do not have semi-explicit expressions for the functions $\psiq, \phiqF, \text { and } I_F$, we are still able to perform computations by solving numerically their associated ODEs, this allows us to find numerically $\hat{b}$ the unique inflection point of $\psiq'$. Consequently, we can find a numerical solution to Equation~\eqref{b_optimal}.

In Figure \ref{logistic:heatmap}, we illustrate the sensitivity of $b^\ast$ for different values of $(F_0,F_1)$. In particular, the region below the red curve represents the values of $(F_0,F_1)$ for which $b^\ast = 0$; whereas the region above the red curve shows the values where $b^\ast > 0$. We remark that the darker the shade, the larger the value of $b^\ast$. We can observe that, for a fixed value of $F_0$, $b^\ast$ is nondecreasing with respect to $F_1$; and, likewise for a fixed value of $F_1$, then $b^\ast$ is nondecreasing with respect to $F_0$. Also notice that, thanks to Proposition \ref{prop_exist_b}, we have that $b^\ast$ has an upper bound given by $\hat{b}$, the unique inflection point of $\psiq'$, and in Figure \ref{logistic:heatmap} we can observe that the region with the darkest shade corresponds to the values of $(F_0,F_1)$ where $b^\ast$ is close to the value $\hat{b}$.\\
\begin{figure}[h!]
    \centering
    \includegraphics[scale=0.5]{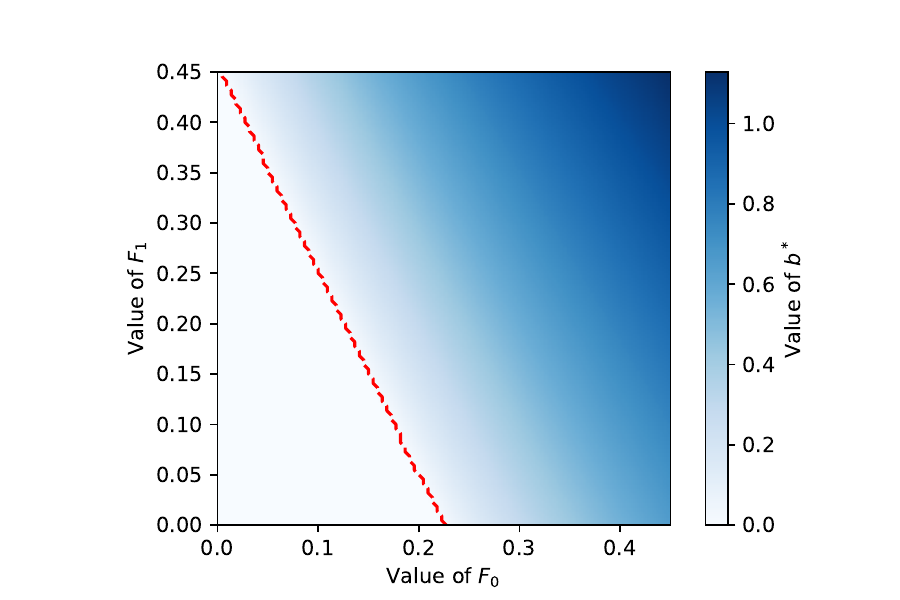}
    \caption{Heat map of $b^\ast$ as a function of $(F_0,F_1) \in \R_+^2$, when $\mu_0 = 0.15, \, \mu_1 = 0.21, \, K = 10, \, \sigma_0 = 0.75,\, \sigma_1 = 0.5, \text{ and } q = 0.33$.}
    \label{logistic:heatmap}
\end{figure}

In Figure \ref{logistic:sigma:effect}, we illustrate the effect of the diffusion coefficient on the value function, as well as in the optimal refraction barrier. To this end, first we fixed a value of $\sigma_1$, and proceeded to solve the optimisation problem for different values of $\sigma_0$; afterwards, we fixed a value of $\sigma_0$ and solved the control problem for different values of $\sigma_1$. Firstly, we confirm the concavity of the value function on all cases. Regarding the effect of $\sigma_0$ and $\sigma_1$, we notice that if $\sigma_0 \leq \widetilde{\sigma}_0$ while $\sigma_1$ is fixed, then the corresponding value functions $J_{b^\ast}$ and $J_{\widetilde{b}^\ast}$ satisfy $J_{\widetilde{b}^\ast} \leq J_{b^\ast}$. We observe a similar effect with respect to $\sigma_1$ while $\sigma_0$ is fixed.

\begin{figure}[h!]
    \centering
    \begin{tabular}{cc}
        \includegraphics[scale=0.5]{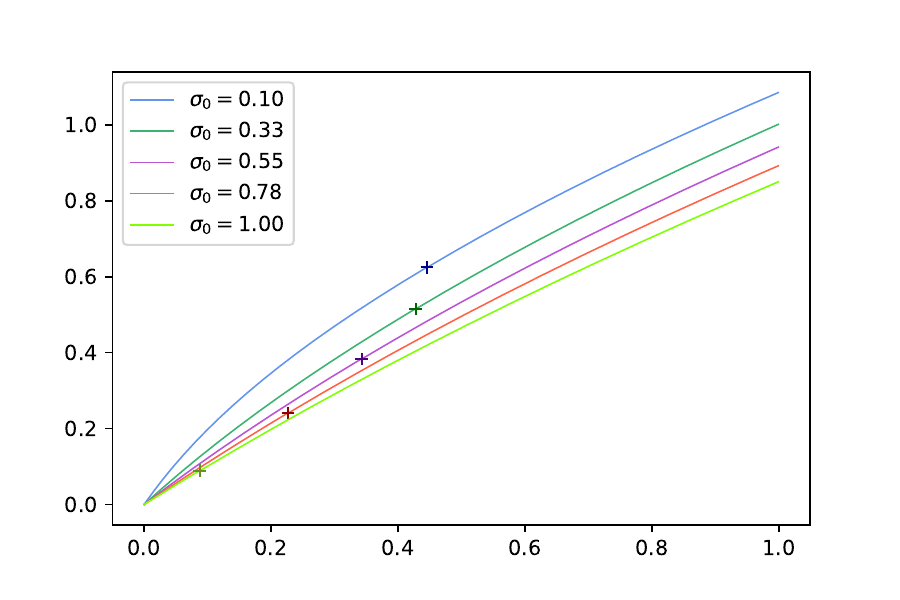}     &    \includegraphics[scale=0.5]{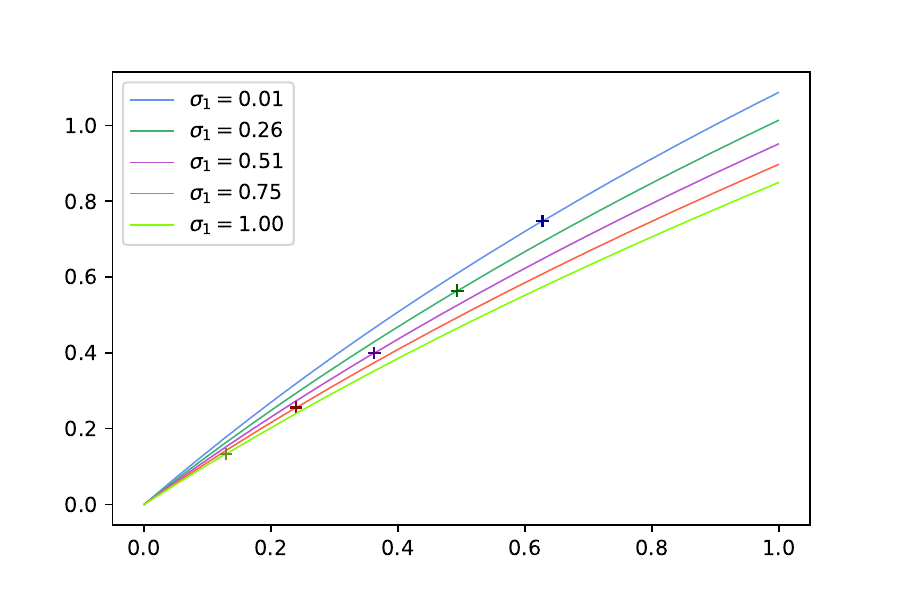} \\
         \includegraphics[scale=0.5]{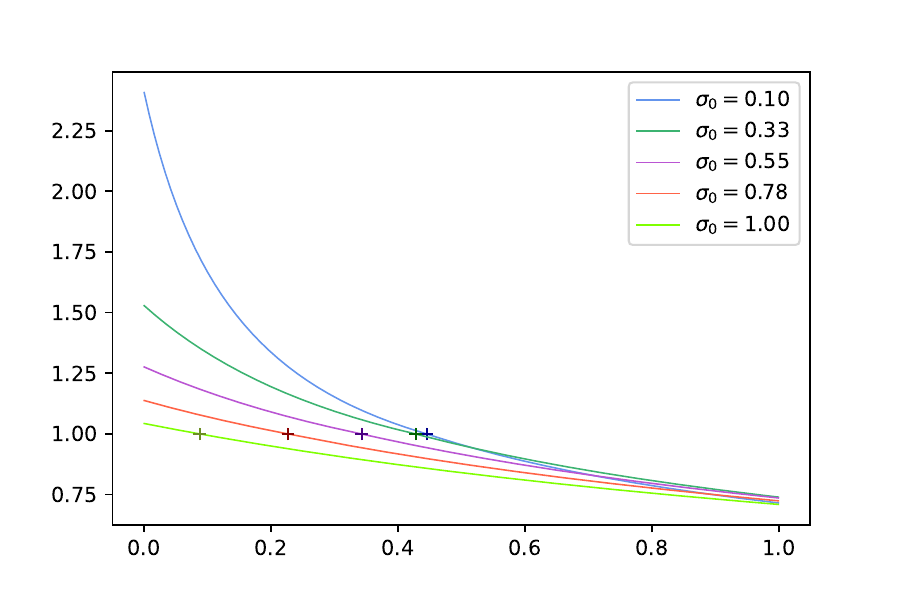} &   \includegraphics[scale=0.5]{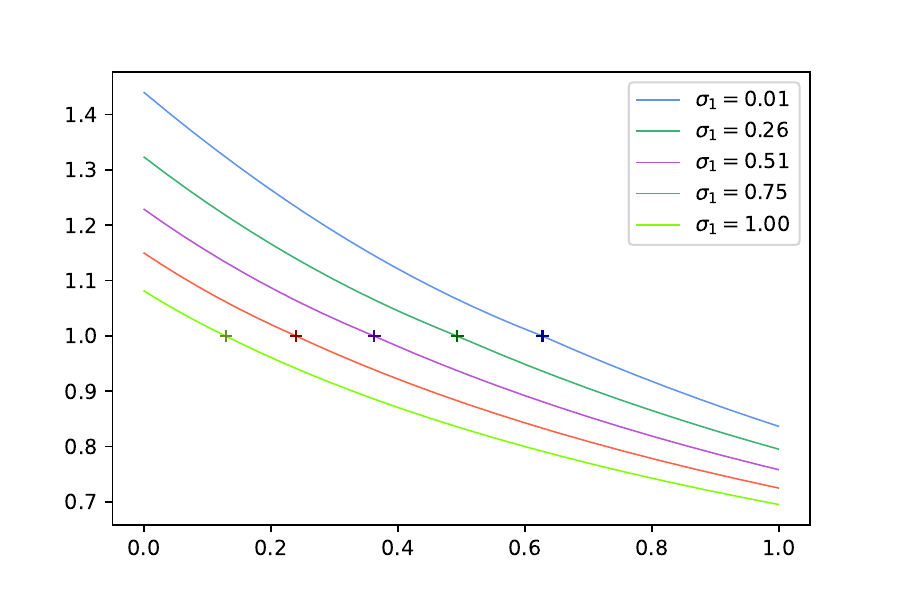}
    \end{tabular}

    \caption{Effect of the values of $\sigma_0$ and $\sigma_1$ when $q = 0.33, \,\mu_0= 0.25, \, \mu_1 = 0.3, \, K = 5 $. The bound function is $F(x) = 0.15 + 0.25 x$. The optimal refraction threshold is marked as a cross. Left: Value function $J_{b^\ast}$ (on the top row) and its first derivative (on the second row) for different values of $\sigma_0$ when $\sigma_1 = 0.75$ is fixed. Right: Value function $J_{b^\ast}$ (on the top row) and its first derivative (on the second row) for different values of $\sigma_1$ when $\sigma_0 = 0.75$ is fixed. }
    \label{logistic:sigma:effect}
\end{figure}

\section*{Acknowledgements}

The authors would like to thank Clarence Simard for fruitful discussions.

Funding in support of this work was provided by a CRM-ISM Postdoctoral Fellowhip and Discovery Grants (RGPIN-2019-06538, RGPIN-2020-07239) from the Natural Sciences and Engineering Research Council of Canada (NSERC).

\appendix

\section{Finiteness and transversality property}\label{App:Defined}

Here is the proof of Proposition~\ref{Prop:Defined}.

\begin{proof}
    Let $h(x) = 2(1 - |x|\wedge 1)$ for $x \in \R$, and note that $h$ is continuous on $\R$, $h(x) \in [0,2]$ for all $x\in \R$ and $h(x) = 0$ whenever $|x|> 1$. Next, we construct the functions
    \[
    g(x) = \begin{cases}
        -1, & x < -1 \\
        -1 + \int_{-1}^x h(y)\diff y, & x \in [-1,1] \\
        1, & x > 1
    \end{cases}, \quad f(x) = \begin{cases}
        -x, & x < -1 \\
        1 + \int_{-1}^x g(y)\diff y, & x \in [-1,1] \\
        x, & x > 1
    \end{cases}.
    \]
    Note that, by their construction, we have $f(x) \geq |x|, \, f'(x) = g(x), \, f'(x) = \text{sgn}(x) $ whenever $|x| > 1$; and, $f''(x) = h(x)$. Here $\text{sgn}$ denotes the \textit{sign} function.\\
    Define the constants $\tilde{q}:= \max(\mu'(0+),q/2), \bar{\sigma} := \sup_{|x| < 1}\sigma(x)$, and $$C =  \max\left\lbrace \sup_{|x|<1} \left[- \tilde{q}f(x) + f'(x)(\mu(x)-F(x))\right] ,0 \right\rbrace.$$ Note that $\tilde q > 0 $ and $\mu'(0+) \leq \tilde{q} < q.$ Because $f$ is smooth enough and $f''$ is bounded, we can apply It\^{o}'s Formula in order to obtain

    $\E_x \left[ e^{-\tilde{q}t} f(X_t) \right] - f(x)$
    \begin{align}
        & =  \E_x\left[ \int_0^t e^{-\tilde{q}s} \left( -\tilde{q} f(X_s) + f'(X_s)(\mu(X_s) - F(X_s)) + \frac{1}{2}\sigma^2(X_s) f''(X_s) \right) \diff s  \right] \notag\\
        &\leq  \E_x\left[ \int_0^t e^{-\tilde{q}s} \left( \Ind_{\lbrace 
|X_s| > 1 \rbrace}(-\tilde{q} f(X_s) + f'(X_s)(\mu(X_s) - F(X_s))) + C \Ind_{\lbrace 
|X_s| \leq 1 \rbrace} + \bar{\sigma}^2\right) \diff s  \right] \notag\\
& =   \E_x\left[ \int_0^t e^{-\tilde{q}s} \left( \Ind_{\lbrace 
|X_s| > 1 \rbrace}(-\tilde{q} |X_s| + \text{sgn}(X_s)(\mu(X_s) - F(X_s))) + C \Ind_{\lbrace 
|X_s| \leq 1 \rbrace} + \bar{\sigma}^2\right) \diff s  \right]. \label{defined_0}
    \end{align}
Observe that the term $\E_x\left[ \int_0^t e^{-\tilde{q}s}\left( C \Ind_{\lbrace 
|X_s| \leq 1 \rbrace} + \bar{\sigma}^2 \right) \diff s \right]$ is non-negative, and we have the upper bound 
\begin{equation}\label{defined_1}
\E_x\left[ \int_0^t e^{-\tilde{q}s}\left( C \Ind_{\lbrace 
|X_s| \leq 1 \rbrace} + \bar{\sigma}^2 \right) \diff s \right] \leq \frac{C + \bar{\sigma}^2}{\tilde{q}} .
\end{equation}
It remains to analyse the term $ \E_x\left[ \int_0^t e^{-\tilde{q}s} \Ind_{\lbrace 
|X_s| > 1 \rbrace}(-\tilde{q} |X_s| + \text{sgn}(X_s)(\mu(X_s) - F(X_s))) \diff s\right]$. To this end, if $z > 1$ we have that 
\begin{equation}\label{defined_2}-\tilde{q} z + (\mu(z) - F(z)) \leq -\tilde{q} z + \mu(z) \leq \mu(0),
\end{equation}
where in the first inequality we used that $F$ is non-negative on $(0,\infty)$, and in the second inequality we used that $\mu$ is concave, hence $\mu(z) \leq \mu(0) + \mu'(0+) z$, together with the fact that $\tilde{q} \geq \mu'(0+)$.\\
On the other hand, if $z < -1$ we have
\begin{equation}\label{defined_3} \tilde{q} z - (\mu(z) - F(z)) = ( \tilde{q} - \mu'(0+) + F'(0+) )z + F(0) - \mu(0) \leq F(0) - \mu(0) \leq F(0), \end{equation}
where in the equality we simply substituted the value of the extensions of $\mu$ and $F$, in the first inequality we used that $\tilde{q} \geq \mu'(0+)$ and $F'(0+)z \leq 0$ because $F'(0+) \geq 0$, and in the second inequality we used that $\mu(0) >0$.\\
Using the upper bounds \eqref{defined_2} and \eqref{defined_3}, we have that 
\begin{align}
\E_x\left[ \int_0^t e^{-\tilde{q}s} \Ind_{\lbrace 
|X_s| > 1 \rbrace}(-\tilde{q} |X_s| + \text{sgn}(X_s)(\mu(X_s) - F(X_s))) \diff s\right] &\leq \E_x\left[ \int_0^t e^{-\tilde{q}s} \Ind_{\lbrace 
|X_s| > 1 \rbrace} \mu(0) \vee F(0) \diff s\right] \notag \\
&\leq \frac{ \mu(0) \vee F(0) }{\tilde{q}} .
\label{defined_4}
\end{align}

Hence, by combining \eqref{defined_1} and \eqref{defined_4} we have that
\[
\E_x \left[ e^{-\tilde{q}t} f(X_t) \right] \leq f(x) + \frac{ \mu(0) \vee F(0) + C + \bar{\sigma}^2 }{\tilde{q}} .
\]
Let 
\begin{equation}\label{H_x}
H_x:= f(x) + \frac{ \mu(0) \vee F(0) + C + \bar{\sigma}^2 }{\tilde{q}} .
\end{equation}
Then, since $f(x) \geq |x|$ we get the upper bound
\begin{equation}\label{defined_5}
 \E_x \left[ e^{-\tilde{q}t} |X_t| \right] \leq \E_x \left[ e^{-\tilde{q}t} f(X_t) \right] \leq H_x.
\end{equation}
Finally, using that $F$ is concave, and \eqref{defined_5}, we obtain for $q > \mu'(0+)$ 
\begin{align*}
    \E_x\left[ \int_0^{\infty} e^{-qt} |F(X_t)| \diff t \right] &\leq \E_x\left[ \int_0^{\infty} e^{-qt} (F(0) + F'(0)|X_t|) \diff t \right] \\
    & \leq \frac{F(0)}{q} + F'(0) \int_0^{\infty} e^{-(q-\tilde{q})t} \E_x\left[ e^{-\tilde{q}t} |X_t| \right]\diff t \\
    & \leq \frac{F(0)}{q} + F'(0) \frac{H_x}{q - \tilde{q}} < \infty.
\end{align*}
This proves the result.
\end{proof}

From the last proof, we can extract the following lemma.

\begin{lemma}\label{rem:transversal}
Under Assumptions~\ref{assumptions}, we have the following transversality property: $\E_x\left[ e^{-qT} X_T \right] \to 0$ as $T \to \infty$.
\end{lemma}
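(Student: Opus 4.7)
The key observation is that the previous proof of Proposition~\ref{Prop:Defined} already gives us essentially all we need: namely, the uniform bound $\E_x[e^{-\tilde q t}|X_t|]\leq H_x$ displayed in~\eqref{defined_5}, valid for every $t\geq 0$, where $\tilde q=\max(\mu'(0+),q/2)$ and $H_x$ is the finite constant defined in~\eqref{H_x} (independent of $t$). So the plan is simply to exploit the strict inequality $\tilde q<q$.

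More precisely, I would first write $|\E_x[e^{-qT}X_T]|\leq \E_x[e^{-qT}|X_T|]$ by Jensen, and then factorise
\[
\E_x\bigl[e^{-qT}|X_T|\bigr]
=e^{-(q-\tilde q)T}\,\E_x\bigl[e^{-\tilde q T}|X_T|\bigr]
\leq e^{-(q-\tilde q)T}H_x.
\]
Assumption~\ref{assumptions}(1) guarantees $\mu'(0+)<q$, and since $q/2<q$ we get $\tilde q<q$, hence $q-\tilde q>0$. Therefore the right-hand side tends to $0$ as $T\to\infty$, and the transversality property follows.

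The main (and only real) work is already contained in the proof of Proposition~\ref{Prop:Defined}; once that bound is in hand, the lemma is a one-line consequence. The subtle point to highlight in the write-up is merely the choice $\tilde q=\max(\mu'(0+),q/2)$, which is precisely tailored so that (i) $\tilde q\geq \mu'(0+)$, making the inequalities~\eqref{defined_2}--\eqref{defined_3} work in the Dynkin computation, and simultaneously (ii) $\tilde q<q$, leaving a strictly positive exponential $e^{-(q-\tilde q)T}$ to drive the limit. No further estimates or auxiliary constructions are needed.
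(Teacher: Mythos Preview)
Your proposal is correct and follows essentially the same argument as the paper: both invoke the bound~\eqref{defined_5} from the proof of Proposition~\ref{Prop:Defined} and exploit $\tilde q<q$ to conclude that $e^{-(q-\tilde q)T}H_x\to 0$. The only cosmetic difference is that the paper's statement of the lemma's proof writes $\tilde q=\mu'(0+)$ (a slight slip relative to the definition $\tilde q=\max(\mu'(0+),q/2)$ used in Appendix~\ref{App:Defined}), whereas you correctly recall the full definition; your version is in fact the more accurate of the two.
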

\begin{proof}
Denoting $\widetilde q=\mu'(0+)$, it is proved in the proof of Proposition~\ref{Prop:Defined} that 
\[
\E_x\left[ e^{-qT} |X_T| \right] = e^{-(q-\tilde{q})T} \E_x\left[ e^{-\tilde{q}T} |X_T| \right] \leq e^{-(q-\tilde{q})T} H_x ,
\]
where $H_x$ is defined in \eqref{H_x}. Since $q > \mu'(0+)$ by Assumptions~\ref{assumptions}, we have $e^{-(q-\tilde{q})T} H_x \rightarrow 0$, as $T \rightarrow\infty$, for all $x \in \R$.
\end{proof}

\section{Properties of the concave envelope}\label{App:Envelope}

Let $I$ be  a function defined on $\R$. Its concave envelope $U$ is defined, in broad terms, as the smallest concave function lying above $I$. A formal definition is given by
\begin{equation}\label{env3}
U(z) := \inf\BRA{ \Phi(z) : \Phi \text{ is concave and } \Phi \geq I}.
\end{equation}
The function $U$ takes its values in $\R\cup\BRA{+\infty}$.

There are various alternative and equivalent definitions of the concave envelope, we list some of them: for $z \in \R$,
\begin{align}
\label{env2}
    U(z)& = \sup \left\lbrace I(x) + \frac{I(x) - I(y)}{x-y}(z-x) : x \leq z \leq y \right\rbrace ;
\\
    U(z) &= \inf \BRA{ \Phi(z) : \Phi \text{ is affine and } \Phi \geq I } .
    \end{align}
In any case, $U$ is itself a concave function.  

We refer the reader to \cite[Chap. B, Sec. 2.5]{hiriart2004fundamentals} and \cite{roberts-varberg_1973} for a broader and more detailed exposition of \textit{convex envelopes}, and their properties.

\begin{lemma}\label{lem:affine}
Let $I$ be a continuous function on $\R$ and $U$ its concave envelope. Assume that $U$ takes finite values. If there exists $( a,b)\subset \R$ such that $I<U$ on $( a,b)$, then $U$ is affine on $(a,b)$.
\end{lemma}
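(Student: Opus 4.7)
I would proceed by contradiction: suppose $U$ is not affine on $(a,b)$, and construct a concave function $V$ with $V\geq I$ yet $V<U$ at some point, contradicting the definition~\eqref{env3} of $U$ as the pointwise infimum of concave functions dominating $I$. Since $U$ is concave and finite on $\R$, it is continuous. A short topological observation shows that $U$ fails to be locally affine at some point $c^*\in(a,b)$: otherwise the slope would be locally constant, hence constant on the connected interval $(a,b)$, making $U$ affine on $(a,b)$.

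Fix $\delta>0$ small enough that $[c^*-\delta,c^*+\delta]\subset(a,b)$, and let $A_\delta$ be the chord connecting $(c^*-\delta,U(c^*-\delta))$ and $(c^*+\delta,U(c^*+\delta))$. Concavity gives $A_\delta\leq U$ on $[c^*-\delta,c^*+\delta]$, with strict inequality at some interior point $x_0$ (otherwise $U$ would coincide with $A_\delta$, hence be affine there, contradicting the choice of $c^*$). I would then define
\[
V(x)=\begin{cases} A_\delta(x), & x\in [c^*-\delta,c^*+\delta],\\ U(x), & \text{otherwise,}\end{cases}
\]
and verify concavity by comparing one-sided derivatives at the two junctions $c^*\pm\delta$: the slope of $A_\delta$ equals $(U(c^*+\delta)-U(c^*-\delta))/(2\delta)$, which lies between $U'_+(c^*+\delta)$ and $U'_-(c^*-\delta)$ by concavity of $U$. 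Hence the right derivative of $V$ is dominated by its left derivative at each junction, which together with the concavity of $U$ off the interval and the affineness of $A_\delta$ on it yields concavity of $V$ on all of $\R$.

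The crux is to choose $\delta$ small enough that $V\geq I$, equivalently $A_\delta\geq I$ on $[c^*-\delta,c^*+\delta]$. I would combine two uniform estimates: the hypothesis $I<U$ on $(a,b)$ and continuity give a positive gap $\min_{[c^*-\delta,c^*+\delta]}(U-I)\to U(c^*)-I(c^*)>0$ as $\delta\downarrow 0$, while $\max_{[c^*-\delta,c^*+\delta]}(U-A_\delta)\to 0$ since both $U$ and $A_\delta$ converge uniformly to $U(c^*)$ on the shrinking interval. For sufficiently small $\delta$ the second quantity is dominated by the first, which yields $A_\delta\geq I$ on $[c^*-\delta,c^*+\delta]$, and hence $V\geq I$ on all of $\R$. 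Since $V(x_0)=A_\delta(x_0)<U(x_0)$, we obtain the required contradiction.

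The main obstacle I anticipate is the local step producing a point $c^*$ where $U$ is not locally affine: it relies on the connectedness of $(a,b)$ rather than a direct computation, and it is what guarantees that the chord perturbation can be made strictly below $U$ on a \emph{small} interval contained in $(a,b)$, which in turn is what makes the perturbation remain above $I$.
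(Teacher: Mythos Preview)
Your proof is correct and takes a genuinely different route from the paper's. The paper works \emph{globally}: it defines the chord $L$ joining $(a,U(a))$ and $(b,U(b))$, observes that by concavity $U\geq L$ on $[a,b]$ and $U\leq L$ outside, and then shows $L\geq I$ on $(a,b)$ by analysing the maximum of $I-L$ over $[a,b]$ and using $I<U$ to force the maximiser to an endpoint where $L=U\geq I$. This immediately identifies the affine function that $U$ must equal on $(a,b)$. Your argument is instead a \emph{local perturbation}: locate a point $c^*$ where $U$ is not locally affine, replace $U$ on a tiny interval by its chord, and obtain a concave competitor still dominating $I$ but strictly below $U$, contradicting minimality. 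The paper's argument is shorter and constructive, but as written it uses $U(a)$, $U(b)$ and the compactness of $[a,b]$, so it applies directly only to bounded intervals; your local construction works verbatim when $a=-\infty$ or $b=+\infty$, which is how the lemma is actually invoked later in the paper (Cases~(2)--(4) of the proof of Proposition~\ref{Prop:IF:Concave}). Of course the paper's version can be recovered on unbounded intervals by exhausting with bounded subintervals and then applying your connectedness step, but your argument handles this in one pass.
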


\begin{proof}
We mainly follow the proof of \cite[Lemma 2.2]{MASTYLO2006192}. Let us define the following affine function:
\[
L(z)=U(a)+\frac{U(b)-U(a)}{b -a}(z-a).
\]
In particular, we have $L(a)=U(a)$ and $L(b)=U(b)$. Since $U$ is concave, we have $U\geq L$ on $[a,b]$ and $U\leq L$ on $\R\setminus [a,b]$. We also know that $U\geq I$ on $\R$ and we assume that $U>I$ on $(a,b)$. Therefore, as $U \geq L$ on $(a,b)$, it suffices to prove that $L \geq I$ on $(a,b)$.

Let us consider $I-L$ on $\PAR{a,b}$ and define
\[
m=\max_{z\in\PAR{a,b}}\BRA{I(z)-L(z)}.
\]
By continuity of the functions involved, there exists $z^\ast \in \SBRA{a,b}$, such that $m=I(z^\ast)-L(z^\ast)$. If $m<0$, the result follows.

If $m\geq 0$, as $I\leq U\leq L$ on $\R\setminus [a,b]$, which means $\max_{z\notin\PAR{a,b}}\BRA{I(z)-L(z)} \leq 0$, we can write
\[
m=\max_{z\in\R}\BRA{I(z)-L(z)}.
\]
Thus, for all $z\in\R$, $I(z)\leq m+L(z)$. As $U$ is the concave envelope of $I$ and $m+L$ is affine, by definition~\eqref{env3}, we deduce
\begin{equation}\label{eq:upper bound u}
 U(z)\leq m+L(z) , \quad \text{for all $z \in [a,b]$.}
\end{equation}
Then, for $z=z^\ast$,
\[
I(z^\ast)\leq U(z^\ast)\leq m+L(z^\ast)=I(z^\ast)
\]
and we deduce that $I(z^\ast)=U(z^\ast)$.

Since by assumption we have $U>I$ on $\PAR{a,b}$, we must have that $z^\ast=a$ or $z^\ast=b$. So, either $I(a)-L(a)= U(a)-L(a)=0$ or $I(b)-L(b)= U(b)-L(b)=0$. Thus, $m=0$ and the result follows.
\end{proof}

\begin{lemma}\label{lemma:interval}
Let $I$ be a function in $\mathcal{C}^2(\R)$ and $U$ its concave envelope. Let $x_0\in\R$ such that $I(x_0)<U(x_0)<\infty$.

\begin{enumerate}
\item \label{casex} If there exists $\hat x \in (-\infty, x_0)$ such that $U(\hat x)=I(\hat x)$ and $I<U$ on $(\hat x, x_0)$, then $U$ is differentiable at $\hat x$ and
\[U(\hat x)=I(\hat x), \quad U'(\hat x)=I'(\hat x),\quad \text{ and}\quad  I''(\hat x)\leq 0,\]

\item \label{casey}
If there exists $\hat y \in (x_0,+\infty)$ such that $U(\hat y)=I(\hat y)$ and $I<U$ on $(x_0, \hat y)$, then $U$ is differentiable at $\hat y$ and
\[U(\hat y)=I(\hat y), \quad U'(\hat y)=I'(\hat y),\quad \text{ and}\quad I''(\hat y)\leq 0.\]
\end{enumerate}

\end{lemma}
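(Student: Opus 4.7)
The plan is to prove case~(1); case~(2) follows by an entirely symmetric argument with $\hat{y}$ in place of $\hat{x}$ and the roles of left and right one-sided derivatives exchanged.

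First, I would apply Lemma~\ref{lem:affine} to the interval $(\hat{x},x_0)$: since $I<U$ there, $U$ is affine on this interval, say $U(z)=A+Bz$. A brief finiteness check is needed to extend this continuously to $\hat{x}$. The function $U$ is finite at $\hat{x}$ (equal to $I(\hat{x})$) and at $x_0$ (equal to $U(x_0)<\infty$), hence on $[\hat{x},x_0]$; moreover, if $U$ were $+\infty$ at some $z_0<\hat{x}$, concavity applied to the triple $z_0<\hat{x}<z$ with $z\in(\hat{x},x_0)$ would force $U(\hat{x})=+\infty$, contradicting $U(\hat{x})=I(\hat{x})$. Thus $U$ is finite, and therefore continuous, on $(-\infty,x_0]$, and passing the affine formula to the limit at $\hat{x}$ yields $U(z)=I(\hat{x})+B(z-\hat{x})$ on $[\hat{x},x_0)$.

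The heart of the argument is a chain of three one-sided-derivative inequalities that collapses to a single equality. From $I(z)\leq I(\hat{x})+B(z-\hat{x})$ on $(\hat{x},x_0)$, dividing by $z-\hat{x}>0$ and letting $z\downarrow\hat{x}$ gives $I'(\hat{x})\leq B$. From $I(z)\leq U(z)$ for $z<\hat{x}$ together with $I(\hat{x})=U(\hat{x})$, I obtain $U(z)-U(\hat{x})\geq I(z)-I(\hat{x})$; dividing by $z-\hat{x}<0$ reverses the inequality, and letting $z\uparrow\hat{x}$ yields $U'(\hat{x}-)\leq I'(\hat{x})$. By concavity of $U$, $U'(\hat{x}-)\geq U'(\hat{x}+)=B$. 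Chaining these,
\[
U'(\hat{x}-)\;\leq\;I'(\hat{x})\;\leq\;B\;=\;U'(\hat{x}+)\;\leq\;U'(\hat{x}-),
\]
so all four quantities coincide. In particular, $U$ is differentiable at $\hat{x}$ with $U(\hat{x})=I(\hat{x})$ and $U'(\hat{x})=I'(\hat{x})$.

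For the second-order conclusion, I would apply the $\mathcal{C}^2$ Taylor expansion
\[
I(\hat{x}+h)=I(\hat{x})+I'(\hat{x})\,h+\tfrac{1}{2}I''(\hat{x})\,h^2+o(h^2),\qquad h\to 0.
\]
For $h>0$ small, $\hat{x}+h\in(\hat{x},x_0)$, so $I(\hat{x}+h)\leq U(\hat{x}+h)=I(\hat{x})+I'(\hat{x})\,h$. Rearranging, dividing by $h^2>0$, and sending $h\downarrow 0$ gives $I''(\hat{x})\leq 0$. The main potential pitfall is essentially organizational: orienting each inequality correctly when dividing by $z-\hat{x}$ of varying sign, and confirming the finiteness of the one-sided derivative $U'(\hat{x}-)$, which is secured by the two-sided finiteness of $U$ near $\hat{x}$ established in the first step.
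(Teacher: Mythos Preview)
Your proof is correct and follows the same overall structure as the paper's: affinity of $U$ on $(\hat{x},x_0)$ from Lemma~\ref{lem:affine}, first-derivative matching at $\hat{x}$, then Taylor's theorem for the sign of $I''(\hat{x})$. The organizational difference lies in the first-derivative step. The paper proves $U'(\hat{x}+)=I'(\hat{x})$ by two separate contradictions (if $I'(\hat{x})>U'(\hat{x}+)$ then $I$ would exceed $U$ just to the right; if $I'(\hat{x})<U'(\hat{x}+)$ then $I$ would exceed the global tangent line $L$ just to the left) and then handles the left derivative $U'(\hat{x}-)$ via a case split on whether $U=I$ or $U>I$ immediately to the left of $\hat{x}$. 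Your single chain $U'(\hat{x}-)\le I'(\hat{x})\le B=U'(\hat{x}+)\le U'(\hat{x}-)$ collapses all of this at once, yielding differentiability of $U$ at $\hat{x}$ without a separate case analysis; this is a tidy improvement. Your preliminary finiteness check (ensuring $U<\infty$ to the left of $\hat{x}$ so that $U'(\hat{x}-)$ is well defined) is also a point the paper leaves implicit.
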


\begin{proof}
Assume case (\ref{casex}) is satisfied.

From Lemma~\ref{lem:affine}, we know that $U$ is affine on $[\hat x, x_0]$. We denote by $L$ the affine function
\begin{align*}
L(x)&=U(\hat x)+U'(\hat x+)(x-\hat x)
\\
&
=I(\hat x)+U'(\hat x+)(x-\hat x),
\end{align*}
where $U'(\hat x+)=\lim_{\underset{x>\hat x}{x\to \hat x}}U'(x)$ is the right derivative of $U$ at $\hat x$.

We have $L=U$ on $[\hat x,x_0]$, and $U\leq L$ on $\R$ by concavity of $U$. We prove by contradiction that $U'(\hat x+)=I'(\hat x)$. 
If $I'(\hat x)>U'(\hat x+)$, then there is $x>\hat x$ such that $I(x)>U(x)$, which is impossible since $U\geq I$ by definition of the concave envelope.
If $I'(\hat x)<U'(\hat x+)$, then there is $x<\hat x$ such that $I(x)>L(x)$, which is also impossible since $U\leq L$.
Then $U'(\hat x+)=I'(\hat x)$.

Since $I\in \mathcal{C}^2(\R)$, by Taylor's theorem, there exists $\varepsilon$ such that, for $x\geq \hat x$,
\begin{align*}
I(x)&=I(\hat x)+I'(\hat x)(x-\hat x)+\frac{I''(\hat x)}{2}(x-\hat x)^2+\varepsilon(x)(x-\hat x)^2\\
&
=U(x)+\frac{I''(\hat x)}{2}(x-\hat x)^2+\varepsilon(x)(x-\hat x)^2,
\end{align*}
with $\lim_{x\to \hat x}\varepsilon(x)=0$. As $U\geq I$, we deduce that $I''(\hat x)\leq 0$.

Finally, let us note that on a small interval of the form $[\tilde x,\hat x]$, with $\tilde x<\hat x$, we have either $U(x)=I(x)$, and then $U$ is differentiable in $\hat x$, with  $U'(\hat x)=I'(\hat x)$, or  $U(x)>I(x)$, and then by the previous proof $U'(\hat x-)=I'(\hat x)$.  Case (\ref{casey}) of the lemma is proved in a very similar way.
\end{proof}

 \section{Fundamental solutions to the ODE for specific models}\label{App:proof-example}

We prove in this section the expressions of the fundamental solutions given in Section~\ref{sec:affine-examples}, when $\mu$ and $F$ are affine functions.
We use the same notations  as in Section~\ref{sec:examples}.

\medskip
\paragraph{\bf When $\sigma$ is constant}
For $c_1>0$, the expression of $\varphi_{q,F}$ can be found in \cite[Equation 2.0.1, Chapter 7]{borodin2002handbook}, and for $c_1=0$, the result is obvious. We give here a proof for $c_1\neq 0$.

By \cite[Chapter 19]{abramowitz-stegun_1964}, two linearly independent solutions of 
\begin{equation}\label{eq:ODE-parabolic}
y''-\PAR{a+\frac{z^2}{4}}y=0
\end{equation}
are
\[
y_1(z)=\mathrm{e}^{-\frac{z^2}{4}}\,_1F_1\PAR{\frac{1}{2}a+\frac{1}{4},\frac{1}{2};\frac{z^2}{2}}
\quad 
\text{and}
\quad
y_2(z)=z\mathrm{e}^{-\frac{z^2}{4}}\,_1F_1\PAR{\frac{1}{2}a+\frac{3}{4},\frac{3}{2};\frac{z^2}{2}}.
\]
The Parabolic cylinder function $D_{-\lambda}$ introduced in Section~\ref{sec:affine-examples} is indeed a particular linear combination of $y_1$ and $y_2$ with $\lambda=a+\frac{1}{2}$ (see \cite[Equation 19.3.1]{abramowitz-stegun_1964}).

We will prove that 
\[
u(x)=\mathrm{e}^{-\frac{z^2}{2c_1}}y(\alpha z),
\]
with $z=\frac{c_0+c_1 x}{\sigma}$, $\alpha=\text{sgn}(c_1)\sqrt{\frac{2}{|c_1|}}$, and $y$ solution of \eqref{eq:ODE-parabolic} with $a=\frac{q}{\ABS{c_1}}+\frac{\text{sgn}(c_1)}{2}$, is a solution of
\begin{equation}\label{eq:ODE-sigma-constant}
\frac{\sigma^2}{2}u''+\PAR{c_0+c_1x}u'-qu=0.
\end{equation}
We can then deduce that  the fundamental solutions of \eqref{eq:ODE-sigma-constant} are linear combinations of $u_1(x)=\mathrm{e}^{-\frac{z^2}{2c_1}}y_1(\alpha z)$ and $u_2(x)=\mathrm{e}^{-\frac{z^2}{2c_1}}y_2(\alpha z)$. 
As $\varphi_{q,F}$ must satisfies $\varphi_{q,F}(0)=1$ and $\lim_{x\to+\infty}\varphi_{q,F}(x)=0$, from Expression~\eqref{eq:D-integral} of the parabolic cylinder function, we deduce the expression of $\varphi_{q,F}$ given by Equation~\eqref{eq:phi-sigma-constant}. The second fundamental solution $\psi_{q}$ is thus a linear combination of $\varphi_{q,0}$ and $u_1$ (or $\varphi_{q,0}$ and $u_2$), with coefficients determined by $\psi_{q}(0)=0$ and $\psi'_{q}(0)=1$. We easily observe that the expression~\eqref{eq:psi-sigma-constant} given in Section~\ref{sec:affine-examples} satisfies the good boundary conditions.

Let us now prove that $u(x)=\mathrm{e}^{-\frac{z^2}{2c_1}}y(\alpha z)$ with $\alpha$ and $z$ defined above is a solution of \eqref{eq:ODE-sigma-constant}. We compute the first and second derivative of $u$ with respect to $x$:
\begin{align*}
u'(x)&=-\frac{c_0+c_1x}{\sigma^2}u(x)+\frac{\sqrt{2\ABS{c_1}}}{\sigma}\mathrm{e}^{-\frac{z^2}{2c_1}}y'(\alpha z)\\
u''(x)&=-\frac{c_1}{\sigma^2}u(x)+\frac{(c_0+c_1x)^2}{\sigma^4}u(x)-2\frac{\sqrt{2\ABS{c_1}}}{\sigma^3}(c_0+c_1x)\mathrm{e}^{-\frac{z^2}{2c_1}}y'(\alpha z)+\frac{2\ABS{c_1}}{\sigma^2}\mathrm{e}^{-\frac{z^2}{2c_1}}y''(\alpha z).
\end{align*}
Consequently,
\begin{align*}
\frac{\sigma^2}{2}u''(x)+\PAR{c_0+c_1x}u'(x)-qu(x)
&=-\PAR{q+\frac{c_1}{2}+\frac{(c_0+c_1x)^2}{2\sigma^2}}u(x)+\ABS{c_1}\mathrm{e}^{-\frac{z^2}
{2c_1}}y''(\alpha z)
\\
&=\ABS{c_1}\mathrm{e}^{-\frac{z^2}
{2c_1}}\PAR{y''(b z)-\PAR{a+\frac{\alpha^2z^2}{4}}y(\alpha z)}=0.
\end{align*}
The function $u(x)=\mathrm{e}^{-\frac{z^2}{2c_1}}y(\alpha z)$ is a solution of the ODE~\eqref{eq:ODE-sigma-constant}, and the conclusion follows.

\medskip 
\paragraph{\bf When $\sigma$ is affine}

The proof scheme is the same as in the previous case. By \cite[Chapter 13]{abramowitz-stegun_1964}, two linearly independent solutions of the differential equation {\bf E$(a,b)$} defined by
\begin{equation}\label{eq:ODE-hypergoem}
zy''+(b-z)y'-ay=0.
\end{equation}
are  the Kummer confluent Hypergeometric function $_1F_1(a,b;z)$ of the first kind, and the confluent hypergeometric function of the second kind $U(a,b;z)$, where $U(a,b;z)$ is indeed an explicit linear combination of $_1F_1(a,b;z)$ and $z^{1-b}\,_1F_1(1+a-b,2-b;z)$ (see \cite[Equation 13.1.3]{abramowitz-stegun_1964}). 

We will prove that 
\[
u(x)=z^ay(\alpha z),
\]
with $z=(\sigma_0+\sigma_1 x)^{-1}$, $a=(\Delta-\sigma_1^2+2c_1)/(2\sigma_1^2)$, $b=1+\Delta/\sigma_1^2$, $\Delta^2=\PAR{\sigma_1^2-2c_1}^2+8q\sigma_1^2$, $\alpha=\frac{2(c_0\sigma_1-c_1\sigma_0)}{\sigma_1^2}$, and $y$ a solution of {\bf E$(a,b)$}, is a solution of
\begin{equation}\label{eq:ODE-sigma-affine}
\frac{1}{2}\PAR{\sigma_0+\sigma_1 x}^2u''+\PAR{c_0+c_1x}u'-qu=0
\end{equation}
As $a,b>0$, from the integral representation~\eqref{eq:1F1-integral} of $_1F_1$, we deduce that $\varphi_{q,F}$ defined by Equation~\eqref{eq:phi-sigma-affine} satisfies $\varphi_{q,F}(0)=1$ and $\lim_{x\to+\infty}\varphi_{q,F}(x)=0$. The function $\psi_{q}$ is thus a linear combination of $\varphi_{q,0}$ and $z^aU(a,b;\alpha z)$ where the coefficients are determined by the boundary conditions of the second fundamental solutions. In fact, we used $z^{1-b}\,_1F_1(1+a-b,2-b;z)$ instead of $U$ as second solution of {\bf E$(a,b)$} in Expression~\eqref{eq:psi-sigma-affine} of Section~\ref{sec:affine-examples}.

Let us now prove that $u(x)=z^ay(\alpha z)$, with $z$, $a$, $b$, and $\alpha$ defined above, is a solution of the ODE~\eqref{eq:ODE-sigma-affine}. We compute the first and second derivative of $u$ with respect to $x$:
\begin{align*}
u'(x)&=-a\sigma_1z^{a+1}y(\alpha z)-\sigma_1 \alpha z^{a+2}y'(\alpha z)\\
u''(x)&=a(a+1)\sigma_1^2z^{a+2}y(\alpha z)+2(a+1)\sigma_1^2\alpha z^{a+3}y'(\alpha z)+\sigma_1^2\alpha^2z^{a+4}y''(\alpha z).
\end{align*}
Consequently,
\begin{align*}
    &\frac{1}{2}\PAR{\sigma_0+\sigma_1 x}^2u''(x)+\PAR{c_0+c_1x}u'(x)-qu(x)\\
    &=\frac{1}{2z^2}u''(x)+\PAR{c_0+c_1x}u'(x)-qu(x)\\
    &=\PAR{\frac{a(a+1)}{2}\sigma_1^2-a\sigma_1\frac{c_0+c_1x}{\sigma_0+\sigma_1 x}-q}u(x)\\
    &\hskip 1cm +\SBRA{-(c_0+c_1x)+(a+1)\sigma_1\PAR{\sigma_0+\sigma_1 x}}\sigma_1\alpha z^{a+2}y'(\alpha z)+\frac{\sigma_1^2\alpha^2}{2}z^{a+2}y''(\alpha z).
\end{align*}
By definition of the parameter $a$, it is a root of the polynomial
\[
\sigma_1^2X^2+(\sigma_1^2-2c_1)X-2q=0,\]
which implies that $\frac{a(a+1)}{2}\sigma_1^2-ac_1-q=0$. We thus remark that
\begin{align*}
\frac{a(a+1)}{2}\sigma_1^2-a\sigma_1\frac{c_0+c_1x}{\sigma_0+\sigma_1 x}-q&=ac_1-a\sigma_1\frac{c_0+c_1x}{\sigma_0+\sigma_1 x}\\
&=-a\alpha\frac{\sigma_1^2}{2}z.
\end{align*}
Besides,
\begin{align*}
-(c_0+c_1x)+(a+1)\sigma_1\PAR{\sigma_0+\sigma_1 x}&=-\frac{\sigma_1}{2}\alpha+\frac{\sigma_1\sigma_0}{2}b+\frac{\sigma_1^2}{2}bx\\
&=\frac{\sigma_1 }{2}b\PAR{\sigma_0+\sigma_1 x}-\frac{\sigma_1 }{2}=\frac{\sigma_1 }{2z}\PAR{b-\alpha z}.
\end{align*}
We finally obtain
\begin{align*}
   & \frac{1}{2}\PAR{\sigma_0+\sigma_1 x}^2u''(x)+\PAR{c_0+c_1x}u'(x)-qu(x)=\frac{\sigma_1 ^2}{2}\alpha z^{1+a}\PAR{-ay(\alpha z)+\PAR{b-\alpha z}y'(\alpha z)+\alpha zy''(\alpha z)}
\end{align*}
which is equal to $0$ since $y$ is solution of {\bf E$(a,b)$}, and then $u$ is a solution of the ODE~\eqref{eq:ODE-sigma-affine}.

\medskip
\paragraph{\bf When $\sigma^2$ is affine}
We only study the case $c_1>0$, the proof for $c_1<0$ being similar. We will prove that 
\[
u(x)=\mathrm{e}^{-2\frac{c_1}{\sigma_1}x}z^by(\alpha z)
\]
with $a=\frac{q}{c_1}$, $b=1+2\frac{c_1\sigma_0-c_0\sigma_1}{\sigma_1^2}$, $\alpha=2\frac{c_1}{\sigma_1^2}$, $z=\sigma_0+\sigma_1 x$, and $y$ solution of the differential equation {\bf E$(1+a,1+b)$} (see \eqref{eq:ODE-hypergoem}), is a solution of
\begin{equation}\label{eq:ODE-sigma2-affine}
    \frac{1}{2}\PAR{\sigma_0+\sigma_1 x}u''+\PAR{c_0+c_1x}u'-qu=0.
\end{equation}

As $a>0$, from the integral form~\eqref{eq:U-integral} of $U$, we deduce that $\varphi_{q,F}$ defined by Equation~\eqref{eq:phi-sigma2-affine+} is solution of the ODE~\eqref{eq:ODE-sigma2-affine} and satisfies $\varphi_{q,F}(0)=1$ and $\lim_{x\to+\infty}\varphi_{q,F}(x)=0$, and the expression~\eqref{eq:psi-sigma2-affine+} of second fundamental solution $\psi_{q}$ follows.

Let us now prove that $u(x)=\mathrm{e}^{-2\frac{c_1}{\sigma_1}x}z^by(\alpha z)$, with $z,a, b,\alpha$ defined above is a solution of the ODE~\eqref{eq:ODE-sigma2-affine}. We compute the first and second derivative of $u$ with respect to $x$:
\begin{align*}
u'(x)&=-2\frac{c_1}{\sigma_1}\mathrm{e}^{-2\frac{c_1}{\sigma_1}x}z^by(\alpha z)+b\sigma_1\mathrm{e}^{-2\frac{c_1}{\sigma_1}x}z^{b-1}y(\alpha z)+\alpha\sigma_1\mathrm{e}^{-2\frac{c_1}{\sigma_1}x}z^by'(\alpha z)\\
u''(x)&=\PAR{4\frac{c_1^2}{\sigma_1^2}-4\frac{c_1b}{z}+\frac{b(b-1)}{z^2}\sigma_1^2}u(x)
+\alpha\PAR{-4c_1 z^b+2b\sigma_1^2 z^{b-1}}\mathrm{e}^{-2\frac{c_1}{\sigma_1}x}y'(\alpha z)+\alpha^2\sigma_1^2\mathrm{e}^{-2\frac{c_1}{\sigma_1}x}z^by''(\alpha z).
\end{align*}
Consequently,
\begin{align*}
& \frac{1}{2}\PAR{\sigma_0+\sigma_1 x}u''+\PAR{c_0+c_1x}u'-qu\\
&=\PAR{2\frac{c_1^2}{\sigma_1^2}z-2c_1b+\frac{b(b-1)}{2z}\sigma_1^2+\PAR{-2\frac{c_1}{\sigma_1}+\frac{b\sigma_1}{z}}(c_0+c_1x)-q}u(x)\\
&
+\alpha\SBRA{-2c_1 z+b\sigma_1^2 +\sigma_1(c_0+c_1 x)}\mathrm{e}^{-2\frac{c_1}{\sigma_1}x}z^{b}y'(\alpha z)+\frac{\alpha^2\sigma_1^2}{2}\mathrm{e}^{-2\frac{c_1}{\sigma_1}x}z^{b+1}y''(\alpha z).
\end{align*}
As $x=\frac{z-\sigma_0}{\sigma_1}$, we observe that
\begin{align*}
&2\frac{c_1^2}{\sigma_1^2}z-2c_1b+\frac{b(b-1)}{2z}\sigma_1^2+\PAR{-2\frac{c_1}{\sigma_1}+\frac{b\sigma_1}{z}}(c_0+c_1x)=-c_1,\\
&-2c_1 z+b\sigma_1^2 +\sigma_1(c_0+c_1 x)=-c_1z+\sigma_1^2+(c_1\sigma_0-c_0\sigma_1)=\frac{\sigma_1^2}{2}\PAR{1+b-\alpha z}.
\end{align*}
Thus,
\begin{align*}
     \frac{1}{2}\PAR{\sigma_0+\sigma_1 x}u''+\PAR{c_0+c_1x}u'-qu
     =c_1\mathrm{e}^{-2\frac{c_1}{\sigma_1}x}z^{b}\PAR{-(1+a)y(\alpha z)+\PAR{1+b-\alpha z}y'(\alpha z)+\alpha z y''(\alpha z)}=0,
\end{align*}
since $y$ is solution of {\bf E$(1+a,1+b)$}, and $u$ is solution of the ODE~\eqref{eq:ODE-sigma2-affine}.

\bibliographystyle{alpha}
\bibliography{references.bib}
\end{document}